\begin{document}

\title{Left restriction monoids from left $E$-completions}
\author{Tim Stokes}

\date{}
\maketitle

\newcommand{\bea}{\begin{eqnarray*}}
\newcommand{\eea}{\end{eqnarray*}}

\newcommand{\ben}{\begin{enumerate}}
\newcommand{\een}{\end{enumerate}}

\newcommand{\bi}{\begin{itemize}}
\newcommand{\ei}{\end{itemize}}

\newcommand{\tie}{\bowtie}

\newenvironment{proof}{\noindent \textbf{Proof.}\hspace{.7em}}
                   {\hfill $\Box$
                    \vspace{10pt}}

\newcommand{\mc}{\mathcal}
\newcommand{\dom}{\mbox{dom}}
\newcommand{\ran}{\mbox{ran}}

\newtheorem{thm}{Theorem}[section]
\newtheorem{theorem}[thm]{Theorem}
\newtheorem{lem}[thm]{Lemma}
\newtheorem{pro}[thm]{Proposition}
\newtheorem{dfnpro}[thm]{Definition/Proposition}
\newtheorem{proposition}[thm]{Proposition}
\newtheorem{cor}[thm]{Corollary}
\newtheorem{conj}[thm]{Conjecture}
\newtheorem{corollary}[thm]{Corollary}
\newtheorem{eg}[thm]{Example}
\newtheorem{dfn}[thm]{Definition}

\newcommand{\gl}{{\mc L}}
\newcommand{\gr}{{\mc R}}

\newcommand{\C}{{\mc C}}

\def\Z{{\mathbf Z}}
\def\F{{\mathbf F}}
\def\R{{\mathbf R}}

\newcommand\nc\newcommand
\nc\rnc\renewcommand

\nc{\uv}[1]{\fill (#1,2)circle(.17);}
\nc{\lv}[1]{\fill (#1,0)circle(.17);}
\nc{\mv}[1]{\fill (#1,1)circle(.17);}  
\nc{\uvs}[1]{{\foreach \x in {#1} { \uv{\x}}}}
\nc{\mvs}[1]{{\foreach \x in {#1} { \mv{\x}}}}
\nc{\lvs}[1]{{\foreach \x in {#1} { \lv{\x}}}}
\nc\colrect[5]{\fill[#5!20](#1,2)--(#2,2)--(#4,0)--(#3,0);}
\nc\colrecthigh[3]{\fill[#3!20](#1,1.75)--(#2,1.75)--(#2,2)--(#1,2);}
\nc\colrectlow[3]{\fill[#3!20](#1,.25)--(#2,.25)--(#2,0)--(#1,0);}
\nc{\uvw}[1]{\draw[fill=white] (#1,2)circle(.18);}
\nc{\lvw}[1]{\draw (#1,0)circle(.18);}
\nc{\uvws}[1]{\foreach \x in {#1}{ \uvw{\x}}}
\nc{\lvws}[1]{\foreach \x in {#1}{ \lvw{\x}}}
\nc{\uverts}[1]{\foreach \x in {#1}{ \uvert{\x}}}
\nc{\lverts}[1]{\foreach \x in {#1}{ \lvert{\x}}}
\nc{\uarcs}[1]{
{\foreach \x/\y in {#1}
{ \uarc{\x}{\y} }
}
}

\nc{\uvert}[1]{\fill (#1,2)circle(.2);}
\rnc{\lvert}[1]{\fill (#1,0)circle(.2);}
\nc{\uarcx}[3]{\draw(#1,2)arc(180:270:#3) (#1+#3,2-#3)--(#2-#3,2-#3) (#2-#3,2-#3) arc(270:360:#3);}
\nc{\uarc}[2]{\uarcx{#1}{#2}{.4}}
\nc{\darcx}[3]{\draw(#1,0)arc(180:90:#3) (#1+#3,#3)--(#2-#3,#3) (#2-#3,#3) arc(90:0:#3);}
\nc{\darc}[2]{\darcx{#1}{#2}{.4}}

\nc{\marcx}[3]{\draw(#1,1)arc(180:90:#3) (#1+#3,1+#3)--(#2-#3,1+#3) (#2-#3,1+#3) arc(90:0:#3);} 
\nc{\marc}[2]{\marcx{#1}{#2}{.4}}

\nc{\uarcxt}[3]{\draw(#1,4)arc(180:270:#3) (#1+#3,4-#3)--(#2-#3,4-#3) (#2-#3,4-#3) arc(270:360:#3);} 
\nc{\uarct}[2]{\uarcx{#1}{#2}{.4}}
\nc{\marcxt}[3]{\draw(#1,2)arc(180:90:#3) (#1+#3,2+#3)--(#2-#3,2+#3) (#2-#3,2+#3) arc(90:0:#3);}  
\nc{\marct}[2]{\marcx{#1}{#2}{.4}}

\nc{\stline}[2]{\draw(#1,2)--(#2,0);}
\nc{\stlinea}[2]{\draw(#1,2)--(#2,1);}  
\nc{\stlineb}[2]{\draw(#1,1)--(#2,0);} 

\nc{\custpartn}[3]{{\lower1.4 ex\hbox{
\begin{tikzpicture}[scale=.3]
\foreach \x in {#1}
{ \uvert{\x}  }
\foreach \x in {#2}
{ \lvert{\x}  }
#3 \end{tikzpicture}
}}}

\begin{abstract}  
Given a monoid $S$ with $E$ any non-empty subset of its idempotents, we present a novel one-sided version of idempotent completion we call left $E$-completion.  In general, the construction yields a one-sided variant of a small category called a constellation by Gould and Hollings.  Under certain conditions, this constellation is inductive, meaning that its partial multiplication may be extended to give a left restriction semigroup, a type of unary semigroup whose unary operation models domain.  We study the properties of those pairs $S,E$ for which this happens, and characterise those left restriction semigroups that arise as such left $E$-completions of their submonoid of elements having domain $1$.  As first applications, we decompose the left restriction semigroup of partial functions on the set $X$ and the  right restriction semigroup of left total partitions on $X$ as left and right $E$-completions respectively of the transformation semigroup $T_X$ on $X$, and decompose the left restriction semigroup of binary relations on $X$ under demonic composition as a left $E$-completion of the left-total binary relations.  In many cases, including these three examples, the construction embeds in a semigroup Zappa-Sz\'{e}p product.  
\end{abstract}

\noindent{\bf Keywords:} Left restriction semigroup, partial function monoid, partition monoid, constellation.
\medskip

\noindent{\bf 2010 Mathematics Subject Classification:} 20M10, 20M20, 20M30.

\section{Introduction}  \label{intro}

The full transformation semigroup $T_X$ on a non-empty set $X$ is perhaps the most important example of a semigroup.  However, the larger monoid $PT_X$ of partial transformations on $X$ is amongst the most important.  The monoid $PT_X$ is frequently viewed as a unary semigroup, equipped with unary operation $D$ given by $D(f)=\{(x,x)\mid x\in \dom(f)\}$, the identity map on $\dom(f)$.  When equipped with $D$ in this way, $PT_X$ is a {\em left restriction semigroup}.  The class of left restriction semigroups is an equational class of unary semigroups, and $PT_X$ is canonical in the sense that all embed in one such: see \cite{trok}, as well as \cite{Csemi} and \cite{manes}.  Left restriction semigroups have received increasing attention in recent years.

There are other examples of left restriction semigroups.  The monoid of binary relations $Rel_X$ under relational composition may be equipped with a domain operation $D$ defined as for $PT_X$, but the resulting unary semigroup is not a left restriction semigroup (although it satisfies most of their defining laws,  and indeed when equipped with the operation of range $R$, defined in the obvious dual way to $D$, is an Ehresmann semigroup; see \cite{Lawson}).  However, if one replaces the usual composition operation by so-called demonic composition $\circledast$, $(Rel_X,\circledast,D)$ is a left restriction semigroup.  Demonic composition is of interest in computer science because of its connection with total correctness of (possibly non-deterministic) programs; see \cite{DAD} for example.  A further example, on this occasion a right restriction semigroup, arises from the partition monoid $P_X$ on a set $X$, namely the submonoid $P^{lt}_X$ of $P_X$ consisting of left total partitions (defined in the natural way).

In what follows, we show that each of the families of examples just mentioned arises via a straightforward construction we call left (or sometimes right) $E$-completion.  It follows that each can be built from the submonoid of elements of domain (or perhaps range) $1$, together with some suitable idempotents $E$ of this submonoid.  We characterise those left restriction semigroups that arise from such a construction.  In particular, $PT_X$ can be constructed from $T_X$ using this construction, as can $Rel_X$ under demonic composition and domain.  But, more surprisingly, $P^{lt}_X$ can be obtained from $T_X$ using the right-handed version of the construction.  

We now describe the construction in more detail.  For any non-empty $E\subseteq E(S)$, define 
$$Cat_E(S)=\{(e,s,f)\in E\times S\times E\mid esf=s\}.$$  
Define the partial binary operation $\circ$ on $Cat_E(S)$ by setting 
$$(e,s,f)\circ (f,t,g):=(e,st,g),$$
with no other products defined.  
Also define $D((e,s,f))=(e,e,e)$ and $R((e,s,f))=(f,f,f)$.  It is routine to check that $\circ$ is well-defined, and that $(Cat_E(S),\circ,D,R)$ is a small category with domain and range operations $D$ and $R$ respectively.
We call this category the {\em $E$-completion of $S$}.  If $E=E(S)$, this is nothing but the Karoubi envelope or idempotent completion of the semigroup; see Part C of \cite{tilson}, where this is defined for an arbitrary semigroup, though not under either of the above names.  Prior to \cite{tilson}, the concept was in use by category theorists as a way of canonically embedding a category into a larger one in which every idempotent splits.

We are interested in a one-sided version of $E$-completion.  

\begin{dfn}  \label{CES}
Let $S$ be a semigroup, with $E\subseteq E(S)$.  Define 
$$C_E(S)=\{(e,s)\in E\times S\mid es=s\}.$$
Define the partial binary operation $\circ$ on $C_E(S)$ by setting 
$(e,s)\circ (f,t)=(e,st)\mbox{ whenever }sf=s$, and define $D((e,s))=(e,e)$. 
The structure $(C_E(S),\circ,D)$ is the {\em left $E$-completion} of $S$. 
\end{dfn}

The left $E$-completion $C_E(S)$ is not a category, but it is a {\em constellation}, in the sense defined by Gould and Hollings in \cite{inductive} and then considered further in \cite{actions} and \cite{constgen}.  In \cite{inductive}, constellations were defined in order to describe partial algebraic counterparts of left restriction semigroups.  This followed earlier work in which inverse semigroups were shown to correspond to inductive groupoids by Ehresmann and Schein (see \cite{ehres}, \cite{schein}), as well as work due to Lawson showing that so-called Ehresmann semigroups correspond to so-called Ehresmann categories (see \cite{Lawson}), a special case of which is the fact that two-sided restriction semigroups correspond to inductive categories.  Ehresmann and restriction semigroups have both domain and range operations, while left restriction semigroups have only domain, so the first task for Gould and Hollings was to replace categories by some one-sided analog, namely constellations.  Constellations have an (asymmetric) partial binary composition operation and a domain operation but no range. 

In what follows, we make use of the notion of a demigroup introduced in \cite{demonize}, in order to consider variants of left $E$-completions as just defined.  

\begin{dfn}  \label{demidef}
A {\em demigroup} $S$ is a unary semigroup with unary operation $d$ mapping into $E(S)$ and satisfying the two laws $d(x)x=x$ and $d(xy)=d(xd(y))$.
\end{dfn} 

Demigroups were defined in \cite{demonize} based on the most general way in which a constellation can arise from a unary semigroup in a sense we describe in detail later.

\begin{dfn}
Let $S$ be a demigroup, with $E\subseteq E(S)$ such that $d(s)\in E$ for all $s\in S$, and $ed(e)=e$ for all $e\in E$. Then we say $S$ is an {\em $E$-demigroup}.  Let
$$C^d_E(S)=\{(e,s)\in E\times S\mid es=s, d(e)=d(s)\}.$$
Define the partial binary operation $\circ$ on $C^d_E(S)$ as well as the unary operation $D$ as for $C_E(S)$, that is, 
$(e,s)\circ (f,t)=(e,st)\mbox{ whenever }sf=s$, and $D((e,s))=(e,e)$. 
The structure $(C^d_E(S),\circ,D)$ is the {\em left $(d,E)$-completion} of $S$. 
\end{dfn}

We shall show that $C^d_E(S)$ is a subconstellation of $C_E(S)$ in general.  But there are two special cases of the greatest importance in what follows.

First, note that if $S$ is a monoid, we may select any set of idempotents $E\subseteq E(S)$ that contains $1$, define $d(s)=1$ for all $s\in S$, and the result is an $E$-demigroup; conversely, every monoid $S$ which is an $E$-demigroup in which $d(s)=1$ for all $s\in S$ clearly arises in this way.  We give such cases a special name.

\begin{dfn}
Let $S$ be a monoid which is an $E$-demigroup, with $1\in E\subseteq E(S)$.  
If $d(s)=1$ for all $s\in S$, then we say $S$ is a {\em left $E$-monoid}. 
\end{dfn}

Hence, a left $E$-monoid is equivalent to a monoid with a distinguished set of idempotents, to which $1$ is added to give $E$.

Secondly, suppose $S$ arises from adjoining a zero to a monoid, and has distinguished $E\subseteq E(S)$ such that $0,1\in E$.   Defining $d(0)=0$ and $d(s)=1$ otherwise, gives an $E$-demigroup; conversely, if $S$ is an $E$-demigroup having zero $0$ and identity $1$ with $d(0)=0$ and $d(s)=1$ otherwise, then $S$ must arise from adjoining a zero to a monoid; we show this in detail below.

\begin{dfn}  \label{lem0}
If $S$ is a monoid with zero that is an $E$-demigroup such that $d(0)=0$ with $d(s)=1$ if $s\neq 0$, we say $S$ is a {\em left $E$-monoid with zero}, and we also denote $C^d_E(S)$ by $C^0_E(S)$.
\end{dfn}

So a left $E$-monoid with zero is equivalent to a monoid with zero having a distinguished set of idempotents, to which $0$ and $1$ are added to give $E$.

If $S$ is a left $E$-monoid, then clearly $C^d_E(S)=C_E(S)$, while if $S$ is a left $E$-monoid with zero, note that $C^0_E(S)=\{(e,s)\in C_E(S)\mid s=0\Rightarrow e=0\}$.

Mostly we are interested in left $E$-monoids and left $E$-monoids with zero.  The motivation behind our general definition of $C^d_E(S)$ in terms of $E$-demigroups was to avoid the necessity of several slightly different definitions and proofs involving $C_E(S)$ and $C^0_E(S)$ for these two cases.  As an added bonus, the case in which $S$ is a demigroup and we set $E=d(S)=\{d(s)\mid s\in S\}$, giving $C^d_{d(S)}(S)$, allows the main construction of \cite{demonize} to be viewed as a special case of the current approach.  Other special cases may prove of interest in the future.

As outlined above, the reason for introducing constellations in \cite{inductive} was to retain from a left restriction semigroup only those products that were needed to re-construct it from the partial algebraic structure, as well as the natural partial order determined by the left restriction semigroup (although this can also be defined algebraically within the constellation).  But one can reverse this.  In the current case, a natural question is: when is the constellation $C^d_E(S)$ inductive, hence nothing but a ``left restriction semigroups in disguise"?  And when does a given left restriction semigroup arise in this way?  In what follows, we obtain a complete answer to the first question by characterising exactly when $C^d_E(S)$ is inductive in terms of conditions on the $E$-demigroup $S$.  We then characterise those left restriction semigroups that arise as $C_E(S)$ or $C^0_E(S)$, and show that important examples of left restriction semigroups arise in this way. 

In Section \ref{prelims} to follow, we work through some preliminaries, with discussion of constellations and left restriction semigroups and the connections between them, as well as of demigroups.  Section \ref{extend} considers $E$-demigroups and their left $(d,E)$-completions, and we establish exactly when an $E$-demigroup $S$ gives rise to an inductive constellation  $C^d_E(S)$, with particular reference to the special cases $C_E(S)$ and $C^0_E(S)$.  Then in Section \ref{lrchar}, we give an internal characterisation of those left restriction semigroups that arise as either $C_E(S)$ or $C^0_E(S)$, and the pairs $S,E$ satisfying the necessary and sufficient conditions are considered for their own sake in Section \ref{leftEmod}.  In Section \ref{Zappa}, we make a connection with semigroup Zappa-Sz\'{e}p products.

In Section \ref{egs}, we give three applications.  It is shown that the left restriction semigroup $(PT_X,\cdot,D)$ (where $\cdot$ is composition) is the zero-reduced left $E$-completion of $(T_X^0,\cdot)$ (which is $T_X$ with adjoined zero element) for any $E\subseteq E(S)$ that is maximal right pre-reduced (defined below).  The left restriction semigroup of binary relations $Rel_X$ under demonic composition, $(Rel_X,\circledast,D)$, is then shown to be the zero-reduced left $E$-completion of the submonoid of left total binary relations for the same choice of $E\subseteq T_X^0\subseteq Rel_X^0$ as in the previous example.  Finally, and perhaps most unexpectedly, the right restriction semigroup $(P^{lt}_X,\cdot,R)$ consisting of left total partitions on $X$ is shown to be the {\em right} $E$-completion of $T_X$ using any maximal {\em left} pre-reduced set of idempotents $E$.  It is shown that with careful choice of $E$, each of these constructions can be viewed as embedded in a semigroup Zappa-Sz\'{e}p product.

In the final section we briefly discuss some directions for future work.

\section{Preliminaries}  \label{prelims}

The current work combines many topics, so we begin with a review of them.   

First, some notation.  In what follows, within semigroups consisting of functions or relations, all function and relation compositions will be written left to right, so that ``$fg$" denotes ``first $f$, then $g$", and correspondingly, we write ``$xf$" when the function $f$ is applied to $x\in \dom(f)$, rather than ``$f(x)$".  The only exception is when unary operations are used: if $D$ is unary, we write ``$D(s)$" rather than ``$sD$".  Recall that if $S$ is a semigroup then $E(S)$ denotes its set of idempotents.

\subsection{Some properties of idempotents}  \label{preredsec}

Throughout this subsection, $S$ is a semigroup.  Two familiar natural quasiorders on $E(S)$ are $\omega^l$ and $\omega^r$,  given by $(e,f)\in \omega^l$ if and only if $e=ef$, and $(e,f)\in \omega^r$ if and only if $e=fe$.  The respective induced equivalence relations on $E(S)$ are evidently Green's relations ${\mc L}$ and ${\mc R}$ respectively, restricted to $E(S)$, and we denote them this way throughout.   The {\em natural order} $\omega$ on $E(S)$ is the intersection of $\omega^l$ and $\omega^r$, so that $(e,f)\in \omega$ if and only if $e=ef=fe$, and is a partial order on $E(S)$, with respect to which $1$ is the top element if $S$ is a monoid with identity $1$, and if $S$ has a zero $0$ then it is the least element.  (The notation $\leq_r$ and $\leq_l$ rather than $\omega^l$ and $\omega^r$ is used in \cite{gendom} and \cite{demonize}, but here we use the more standard notation of semigroup theory.)

Let $E$ be a nonempty subset of $E(S)$. All of the following concepts are defined in \cite{gendom}, although many of them were first defined earlier than that.  We say $E$ is {\em right pre-reduced} if $\omega^l$ is a partial order (that is, $e=ef$ and $f=fe$ imply $e=f$), and {\em left pre-reduced} if  $\omega^r$ is a partial order.  Clearly, $E\subseteq E(S)$ is right pre-reduced if and only if no two elements of $E$ are in the same ${\mc L}$-class.  

We say that $E$ is {\em right reduced} if for all $e,f\in E$, $e=ef$ implies $e=fe$.  There is an obvious dual notion of {\em left reduced}.  We say $E$ is {\em reduced} if it is both left reduced and right reduced; this concept was defined in \cite{Lawson}.  If the elements of $E$ commute with one-another then $E$ is evidently reduced.  Obviously, if $E$ is right reduced, then it is right pre-reduced, although the converse fails in general.  So $E$ is right reduced if ${\omega^l}\subseteq {\omega^r}$ (which happens if and only if $\omega^l$ is the natural order on $E$), left reduced if the opposite inclusion holds, and reduced if ${\omega^l}={\omega^r}$.  

\begin{dfn}
Let $S$ be a semigroup.  We say that the two right pre-reduced sets $E$ and $E'\subseteq E(S)$ are {\em right-equivalent}, and write $E\sim_l E'$, if there is a bijection $e\mapsto e'$ from $E$ to $E'$ such that $e\, {\mc L}\, e'$.  {\em Left-equivalence} is defined dually.
\end{dfn}

If two right pre-reduced sets $E,E'\subseteq E(S)$ are right-equivalent, then it is easily seen that $(E,\omega^l)\cong (E',\omega^l)$ as posets.  

If we pick precisely one element of $E(S)$ from each ${\mc L}$-class to give $F\subseteq E(S)$, then $F$ is right pre-reduced; moreover every right pre-reduced $E\subseteq E(S)$ may be extended to such $F$.  We therefore call such $F$ a {\em maximal right pre-reduced subset of $E(S)$}.  Obviously, any two maximal right pre-reduced subsets are right-equivalent.  All these comments and definitions have dual left-sided versions.

The following elementary facts (which are easy consequences of Green's relations) will be useful in what follows.

\begin{lem}  \label{bighelp}
Let $S$ be a semigroup.  Pick $s,t\in S$ and $e,e',f,f'\in E(S)$ with $e\, {\mc L}\, e'$ and $f\, {\mc L}\, f'$.
\ben
\item If $se=s$ then $se'=s$.
\item If $es=et$ then $e's=e't$.
\item If $esf=es$ then $e'sf'=e's$.
\een
\end{lem}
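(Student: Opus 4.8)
The plan is to unwind the definition of $\sim_r$ and reduce everything to associativity, with no appeal to idempotency beyond what is built into $\sim_r$. Recall that $e\sim_r e'$ means $e\leq_r e'$ and $e'\leq_r e$, i.e. $e=ee'$ and $e'=e'e$; likewise $f\sim_r f'$ gives $f=ff'$ and $f'=f'f$. The key bookkeeping point is that the ``right'' factor $e=ee'$ is what one uses to rewrite right multiplications, while the ``left'' factor $e'=e'e$ is what one uses to rewrite left multiplications. Each of the three parts is then a one-line computation.

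For part (1), assume $se=s$. Then, using $ee'=e$, I would write $se'=(se)e'=s(ee')=se=s$ (the first equality holds because $se=s$). For part (2), assume $es=et$. Using $e'e=e'$, I would write $e's=(e'e)s=e'(es)=e'(et)=(e'e)t=e't$.

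For part (3), the cleanest route is to apply (1) and (2) in turn. Starting from $esf=es$, put $u=es$; then $uf=esf=es=u$, so (1) applied to the element $u$ and the pair $f\sim_r f'$ gives $uf'=u$, that is, $esf'=es$. Now apply (2) to the elements $sf'$ and $s$ (we have just shown $e(sf')=es$), obtaining $e'(sf')=e's$, which is exactly the desired $e'sf'=e's$. Alternatively one can verify (3) directly: $e'sf'=(e'e)sf'=e'(esf')=e'(es)=(e'e)s=e's$, where the middle step uses $esf'=es$ from the first reduction.

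There is no genuine obstacle here, since everything is formal manipulation; the only things worth care are applying the reductions in (3) to the correct triples and invoking the appropriate half of $e\sim_r e'$ (and of $f\sim_r f'$) at each step. One could also note in passing that each of the three statements has an evident left-sided dual obtained by interchanging the roles of left and right multiplication throughout, though only the right-sided versions are needed in the sequel.
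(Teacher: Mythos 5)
Your proof is correct: the paper states this lemma without proof as an elementary fact, and your direct verification (using $ee'=e$, $e'e=e'$ and the analogous identities for $f,f'$, then reducing part (3) to parts (1) and (2)) is exactly the routine argument intended. No issues.
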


For $S$ a semigroup and $a\in S$, denote by $Sa=\{sa\mid s\in S\}$, the left ideal generated by $a\in S$.  If $a\in E(S)$ or if $S$ is a monoid, then $a\in Sa$.  

The following is trivial and we commented upon it earlier.

\begin{lem}  \label{lisame}
Let $S$ be a semigroup.  Then for $e,f\in E(S)$, $Se=Sf$ if and only if $e\, {\mc L}\, f$.
\end{lem}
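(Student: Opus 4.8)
The plan is to prove both implications by direct computation with the defining equations, using only the fact that $e$ and $f$ are idempotents (so in particular $e\in Se$ and $f\in Sf$ without needing $S$ to be a monoid). Recall that $e\sim_r f$ unpacks to the pair of conditions $e=ef$ and $f=fe$.

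For the forward direction, I would start from $Se=Sf$. Since $e$ is idempotent, $e=ee\in Se=Sf$, so $e=sf$ for some $s\in S$; then multiplying on the right by $f$ and using $f^2=f$ gives $ef=sf^2=sf=e$, i.e.\ $e\leq_r f$. Running the symmetric argument with $f\in Sf=Se$ yields $f=fe$, i.e.\ $f\leq_r e$, and hence $e\sim_r f$.

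For the converse, I would assume $e=ef$ and $f=fe$. Given any element $sf\in Sf$, rewrite $sf=s(fe)=(sf)e\in Se$, so $Sf\subseteq Se$; symmetrically $se=s(ef)=(se)f\in Sf$ gives $Se\subseteq Sf$, whence $Se=Sf$.

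There is no real obstacle here: the only point requiring a moment's care is noting that $e\in Se$ and $f\in Sf$ even when $S$ is not a monoid, which is immediate from idempotency and was already observed in the sentence preceding the statement. The rest is a two-line manipulation in each direction.
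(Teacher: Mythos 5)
Your argument is correct and complete: both directions are the standard direct computations, and you rightly note that $e\in Se$ follows from idempotency so no monoid hypothesis is needed. The paper states this lemma without proof (``easily seen''), and your verification is exactly the intended routine argument.
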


\begin{dfn} 
In a semigroup $S$, define $Eq(s,t)=\{u\in S\mid us=ut\}$ for all $s,t\in S$.  If non-empty, $Eq(s,t)$ is a left ideal of $S$, the {\em (left) equalizing set} of $s,t$.  If $Eq(s,t)=Se$ for some $e\in E(S)$, we call $e$ a {\em left equalizer} of $(s,t)$, and we say $e$ {\em equalizes} $(s,t)$.   Dually, define $REq(s,t)=\{u\in S\mid su=tu\}$, a right ideal of $S$ when non-empty, and define {\em right equalizers} dually to left equalizers.
\end{dfn}  

The right-handed version of this concept was considered in \cite{ghs} and called $r^S(s,t)$; semigroups $S$ for which $r^S(s,t)$ is finitely generated characterise a certain type of $S$-act axiomatisability as in \cite{gmps}, and such semigroups are studied for their own sake in \cite{ghs}.

Obviously, this notion of left equalizer is different to the category notion.

\begin{pro}  \label{eq}
Let $S$ be a semigroup with $s,t\in S$, and suppose $e$ equalizes $(s,t)$.  Then $f$ equalizes $(s,t)$ if and only if $e\, {\mc L}\, f$.  Moreover, for all $s\in S$ and $e,f\in E(S)$ for which $e\, {\mc L}\, f$, $Eq(s,se)=Eq(s,sf)$.
\end{pro}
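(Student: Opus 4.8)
The plan is to handle the two assertions separately, in each case reducing to facts already established. For the first assertion, recall that ``$e$ equalizes $(s,t)$'' means precisely that $Eq(s,t)=Se$, which holds by hypothesis. If $f$ also equalizes $(s,t)$, then $Sf=Eq(s,t)=Se$, and since equalizers are idempotent by definition we have $e,f\in E(S)$, so Lemma \ref{lisame} gives $e\sim_r f$. Conversely, if $e\sim_r f$, then Lemma \ref{lisame} gives $Sf=Se=Eq(s,t)$, so $f$ equalizes $(s,t)$ directly from the definition.

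For the second assertion I would argue by double inclusion, exploiting the symmetry of $\sim_r$. Fix $s\in S$ and $e,f\in E(S)$ with $e\sim_r f$, and take $u\in Eq(s,se)$, i.e. $us=u(se)=(us)e$. Writing $a:=us$, this says $ae=a$, so Lemma \ref{bighelp}(1), applied with $a$ playing the role of ``$s$'' and using $e\sim_r f$, yields $af=a$, that is, $us=(us)f=u(sf)$, so $u\in Eq(s,sf)$. Hence $Eq(s,se)\subseteq Eq(s,sf)$; interchanging $e$ and $f$ (legitimate since $\sim_r$ is symmetric) gives the reverse inclusion, and the two sets coincide. Alternatively one can simply right-multiply $us=use$ by $f$ and use $ef=e$, which is part of $e\sim_r f$.

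There is no serious obstacle here: both parts are short deductions from Lemmas \ref{lisame} and \ref{bighelp}. The only points needing a little care are (i) recording that equalizers are idempotent, so that Lemma \ref{lisame} applies in the first part, and (ii) noticing that the second assertion is stated for \emph{arbitrary} idempotents $e,f$ with $e\sim_r f$, not merely for equalizers of some pair, so it is not a formal consequence of the first assertion and must be proved on its own.
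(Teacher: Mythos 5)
Your proof is correct and follows essentially the same route as the paper: the first assertion via Lemma \ref{lisame} applied to $Eq(s,t)=Se=Sf$, and the second via Lemma \ref{bighelp}(1) applied to $us$ to convert $use=us$ into $usf=us$. The extra care you take in noting that equalizers are idempotent by definition, and that the second assertion is independent of the first, is sound but does not change the argument.
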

\begin{proof}
If $e$ equalizes $(s,t)$ then $Eq(s,t)=Se$.  The following are then equivalent by Lemma \ref{lisame}: $f$ equalizes $(s,t)$; $Se=Sf$; $e\, {\mc L}\, f$. 

The final statement follows because for all $s,t\in S$ and $e,f\in E(S)$ with $e\, {\mc L}\, f$, $tse=ts$ if and only if $tsf=ts$ by (1) of Lemma \ref{bighelp}. 
\end{proof}

\subsection{Left restriction semigroups and their generalisations}  \label{leftrestetc}

The semigroups of most interest to us here are unary semigroups equipped with an operation $D$, called {\em domain}.  The class of left restriction semigroups satisfies the following laws (see \cite{inductive}): 
\begin{enumerate}[label=\textup{(R\arabic*)}]

\item $D(x)x=x$;
\item $D(x)D(y)=D(y)D(x)$;
\item $D(D(x)y)=D(x)D(y)$;
\item $xD(y)=D(xy)x$.  \label{r4}
\een
Other useful laws follow easily such as 
\ben
\item[\textup{(R5)}]
 $D(xy)=D(xD(y))$ (the left congruence condition). 
\een

Other equivalent axioms have been used by various authors.  There is an obvious dual notion of right restriction semigroup; the unary operation is typically denoted $R$ (``range") in such cases.
  
It can easily be shown that in a left restriction semigroup $S$, $D(S)=\{D(s)\mid s\in S\}$ is a semilattice under multiplication, and $D(s)$ is the smallest $e\in D(S)$ (with respect to the induced (meet-)semilattice order on $D(S)$) such that $es=s$; see \cite{Csemi}.  The {\em natural order} on $S$ if given by $s\omega t$ if and only if $s=D(s)t$, or equivalently, $s=et$ for some $e\in D(S)$; the natural order is a compatible partial order on $S$ which restricts to the semilattice order on $D(S)$.   

Perhaps the most important example of a left restriction semigroup is the semigroup under composition of partial functions $X\rightarrow X$ for some non-empty set $X$, denoted $PT_X$ and equipped with domain operation given by $D(s)=\{(x,x)\mid x\in \dom(s)\}$.  Every left restriction semigroup embeds in $PT_X$ for some $X$, as was first shown in \cite{trok}; see also \cite{Csemi} and \cite{manes}.  

We say the left restriction semigroup $S$ has a {\em zero} if $S$ has a semigroup zero $0$ such that $0\in D(S)$ (equivalently, $D(0)=0$).  For example, $PT_X$ is a left restriction semigroup with zero the empty function.  We call any left restriction semigroup $S$ that is a monoid with identity $1$ a {\em left restriction monoid}; in this case, $1=D(1)1=D(1)$, so $1\in D(S)$, and the subset $S_1=\{s\in S\mid D(s)=1\}$ is a submonoid of $S$ (since if $s,t\in S_1$ then $D(st)=D(sD(t))=D(s1)=D(s)=1$).  The monoid, $(PT_X)_1$, consisting of the total functions on $X$, is identifiable with $T_X$, and we so identify them in what follows.

The left restriction semigroup axioms have been weakened in various ways.  The class of {\em LC-semigroups} is defined in \cite{Csemi} to consist of those unary semigroups $S$ with unary operation $D$ such that $D(S)=\{D(s)\mid s\in S\}$ is a semilattice, with $D(s)$ the smallest $e\in D(S)$ such that $es=s$.  These are given an equational axiomatisation in  \cite{Csemi}, generalising the above axioms for left restriction semigroups; indeed an LC-semigroup is a left restriction semigroup if and only if law \ref{r4} above holds.  The same concept was also considered by Batbedat in \cite{batbedat} where they were called type SL $\gamma$-semigroups.  There is an obvious dual concept of RC-semigroup defined in terms of a range operation $R$.

\subsection{Constellations, left restriction semigroups and demigroups}  \label{clrs}

Next we define the partial algebras that will form the basis of our main construction.  These are one-sided generalisations of categories called constellations.   

First, we say $e$ is a {\em right identity} for a partial binary operation $\circ$ if for all $a$, if $a\circ e$ exists then it equals $a$.
Following \cite{constgen}, a constellation $P$ is a partial algebra equipped with a partial binary operation and a unary ``domain" operation $D$, such that for all $x,y,z\in P$:
\begin{enumerate}[label=\textup{(C\arabic*)}]
\item if $x\circ(y\circ z)$ exists then so does $(x\circ y)\circ z$, and then the two are equal;  \label{c1}
\item if $x\circ y$ and $y\circ z$ exist then $x\circ(y\circ z)$ exists; \label{c2}
\item $D(x)$ is the unique right identity such that $D(x)\circ x=x$.  \label{c3} 
\end{enumerate}

These axioms generalise the object-free axioms for small categories: there is no range operation $R$, there is asymmetry in Law \ref{c1}, and one law for categories is missing entirely.

Let $P$ be a constellation.  Then $D(P)=\{D(s)\mid s\in P\}$ is the set of all right identities in $P$, and $e\circ e$ exists (hence equals $e$) for all $e\in D(P)$; moreover, for $s,t\in P$, $s\circ t$ exists if and only if $s\circ D(t)$ does.  For $e,f\in D(P)$, setting $e\leq f$ if and only if $e\circ f$ exists makes $\leq$ a quasiorder.  This extends to the {\em natural quasiorder} on $P$ given by $s\leq t$ if and only if $D(s)\circ t$ exists and equals $s$, or equivalently, $s=e\circ t$ for some $e\in D(P)$.  If one of these quasiorders is a partial order, so is the other, and then following \cite{constgen}, $P$ is said to be {\em normal}, and $\leq$ is called the {\em natural order} on $P$.  Normality of $P$ is evidently equivalent to the condition that for all $e,f\in D(P)$, if $e\circ f$ and $f\circ e$ exist then $e=f$. 

As in \cite{constgen}, a subset $Q$ of the constellation $P$ is a {\em subconstellation} of $P$ if $D(s)\in Q$ for all $s\in Q$, and if $s\circ t$ exists in $P$ for some $s,t\in Q$, then $s\circ t\in Q$.  In this case $Q$ is itself a constellation under the inherited partial operations, by Proposition $2.18$ in \cite{constgen}.  

There is a notion analogous to that of a functor for constellations.  Suppose $P,Q$ are constellations.  Following \cite{inductive}, we say the the function $\rho:P\rightarrow Q$ is a {\em radiant} if the following two conditions hold:
\ben
\item for all $s,t\in P$, if $s\circ t$ exists, then so does $(s\rho)\circ(t\rho)$, and $(s\circ t)\rho=(s\rho)\circ(t\rho)$, and
\item for all $s\in P$, $D(s)\rho=D(s\rho)$.
\een
The radiant $\rho$ is {\em strong} if for all $s,t\in P$, if $(s\rho)\circ(t\rho)$ exists then so does $s\circ t$, is an {\em embedding} if it is an injective strong radiant, and is an {\em isomorphism} if it is a surjective embedding in which case we write $P\cong Q$.  These definitions are the standard ones for partial algebras, as in \cite{gratzer} for instance.  It also follows that a radiant $\rho$ is an isomorphism if and only if it has an inverse function which is a radiant.

Constellations were introduced in \cite{inductive} in order to obtain a kind of ``ESN Theorem" for left restriction semigroups.  There, it was shown that a left restriction semigroup can be made into a constellation by defining $s\circ t:=st$ but only when $sD(t)=s$, and by retaining $D$.  When this is done, the natural order in the left restriction semigroup coincides with the natural order on the derived constellation.  A constellation arises from a left restriction semigroup in this way if and only if it is inductive in the sense of \cite{inductive}.  The inductive property was shown in \cite{constgen} to be equivalent to the following conditions on a constellation $P$.  
\bi
\item[\textup{(O4)}] If $e\in D(P)$ and $a\in P$, then there is a maximum $x\in P$ with respect to the natural quasiorder $\leq$ on $P$,  such that $x\leq a$ and $x\circ e$ exists, the {\em co-restriction} of $a$ to $e$, denoted $a|e$; and  \label{o4}
\item[\textup{(O5)}] for $x,y\in P$ and $e\in D(P)$, if $x\circ y$ exists then $D((x\circ y)|e)=D(x|(D(y|e))$. \label{o5}
\ei
Every inductive constellation is normal.  Note that the co-restriction $a|e$ in (O4) is fully determined (if it exists) by the natural order $\leq$ on the constellation $P$. 

A recent characterisation of the inductive property appearing in \cite{demonize} is convenient in what follows.  

\begin{lem}  \label{newindchar}
	The constellation $(P,\circ,D)$ is inductive if and only if it is normal and for all $s\in P$ and $e\in D(P)$, there exists $s\cdot e\in D(P)$ such that for all $t\in P$, $(t\circ s)\circ e$ exists if and only if $t\circ (s\cdot e)$ exists.  In that case, the co-restriction $s|e$ is $(s\cdot e)\circ s$ for all $s\in P$ and $e\in D(P)$.
\end{lem}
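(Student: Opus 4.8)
The plan is to prove the two implications separately, after first recording a handful of routine constellation identities. In any constellation $P$ I would note: (i) if $g\in D(P)$ and $g\circ a$ exists then $D(g\circ a)=g$ --- since $g$ is a right identity and $g\circ(g\circ a)=(g\circ g)\circ a=g\circ a$ by (C1)--(C2), the uniqueness clause of (C3) gives $g=D(g\circ a)$; (ii) consequently $g\circ a=a$ with $g\in D(P)$ forces $g=D(a)$, and $s\le t$ forces $D(s)\le D(t)$; (iii) if $t\circ s$ exists then $D(t\circ s)=D(t)$, by applying (ii) to $D(t)\circ(t\circ s)=(D(t)\circ t)\circ s=t\circ s$; (iv) if $t\circ f$ exists with $f\le f'$ in $D(P)$ then $t\circ f'$ exists, since $t\circ f=t$ and $(t\circ f)\circ f'$ exists by (C1)--(C2); and (v) in an inductive constellation $a\circ e$ exists iff $D(a)=D(a|e)$ iff $D(a)\le D(a|e)$, using $a|e=D(a|e)\circ a\le a$ from (i), normality, and the maximality in (O4).

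For the forward direction I would assume $P$ inductive, hence normal and satisfying (O4) and (O5), and set $s\cdot e:=D(s|e)\in D(P)$. Writing $s|e=g\circ s$ (possible since $s|e\le s$) gives $g=D(s|e)$ by (i), so $s|e=(s\cdot e)\circ s$ as claimed. For the biconditional, fix $t$. If $t\circ s$ does not exist then neither does $(t\circ s)\circ e$; and $t\circ(s\cdot e)=t\circ D(s|e)$ cannot exist either, since $D(s|e)\le D(s)$ and (iv) would then force $t\circ D(s)$, hence $t\circ s$, to exist. If $t\circ s$ exists, then $D(t\circ s)=D(t)$ by (iii); by (v) together with (O5), $(t\circ s)\circ e$ exists iff $D(t\circ s)=D((t\circ s)|e)=D(t|D(s|e))$, i.e.\ iff $D(t)=D(t|D(s|e))$, which by (v) again says exactly that $t\circ D(s|e)=t\circ(s\cdot e)$ exists. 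This settles the forward direction.

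For the converse I would assume $P$ normal with the stated $s\cdot e$ available. Normality makes $s\cdot e$ unique: if $u,u'\in D(P)$ both serve as $s\cdot e$, then taking $t:=u$ in the property shows $u\circ u'$ exists, and symmetrically, so $u=u'$ by normality. To get (O4), fix $a$ and $e$ and consider $x_0:=(a\cdot e)\circ a$. The key move is to feed $t:=a\cdot e$ back into the defining property (with $s:=a$): this shows $((a\cdot e)\circ a)\circ e$ exists iff $(a\cdot e)\circ(a\cdot e)$ exists, and the latter holds since $a\cdot e\in D(P)$; hence $x_0$ exists, $x_0\circ e$ exists, and $x_0\le a$. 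For maximality, if $x=g\circ a\le a$ with $x\circ e$ existing then $(g\circ a)\circ e$ exists, so $g\circ(a\cdot e)$ exists by the property, giving $g\le a\cdot e$ and $g\circ(a\cdot e)=g$; then $x=g\circ a=(g\circ(a\cdot e))\circ a=g\circ x_0\le x_0$ by (C1)--(C2). Thus $a|e=(a\cdot e)\circ a$, and by (i) $D(a|e)=a\cdot e$.

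Finally, for (O5): since $D(y|e)=y\cdot e$ by (i), the right side of (O5) is $D(x|(y\cdot e))=x\cdot(y\cdot e)$ and the left side is $D((x\circ y)|e)=(x\circ y)\cdot e$, so (O5) amounts to the ``cocycle'' identity $(x\circ y)\cdot e=x\cdot(y\cdot e)$ whenever $x\circ y$ exists. By uniqueness of $\cdot$ it is enough to show $x\cdot(y\cdot e)$ satisfies the defining property of $(x\circ y)\cdot e$: for any $t$, since $x\circ y$ exists, $(t\circ(x\circ y))\circ e$ exists iff $((t\circ x)\circ y)\circ e$ exists (by (C1)--(C2)), iff $(t\circ x)\circ(y\cdot e)$ exists (defining property of $y\cdot e$ applied to $t\circ x$), iff $t\circ(x\cdot(y\cdot e))$ exists (defining property of $x\cdot(y\cdot e)$), which is exactly the property of $(x\circ y)\cdot e$. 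Hence $P$ is inductive. I expect the main obstacle to be this converse direction: spotting that the co-restriction must be $(a\cdot e)\circ a$ and recovering its existence by the substitution $t:=a\cdot e$, and then recognising (O5) as the associativity-type identity $(x\circ y)\cdot e=x\cdot(y\cdot e)$ and proving it by iterating the defining property; the forward direction and the identities (i)--(v) are comparatively mechanical.
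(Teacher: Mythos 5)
Your proof is correct. Note that the paper itself states this lemma without proof, importing it from the cited reference \cite{demonize}, so there is no in-document argument to compare against; your route --- taking $s\cdot e:=D(s|e)$ in the forward direction, and in the converse exhibiting the co-restriction as $(a\cdot e)\circ a$ via the substitution $t:=a\cdot e$ and reducing (O5) to the identity $(x\circ y)\cdot e=x\cdot(y\cdot e)$ --- is the natural one and all the steps (including the vacuous cases where $t\circ s$ or $t\circ x$ fails to exist) check out.
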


When a left restriction semigroup $S$ is viewed as a constellation $(S,\circ,D)$ as above, then the latter is inductive, and for $s,t\in S$, $s\circ t=st$ whenever the former is defined, and $s|e=se$ for all $e\in D(S)$. Conversely, it was shown in Proposition 4.5 of \cite{inductive} that an inductive constellation $P$ can be made into a left restriction semigroup by extending the partial multiplication to an everywhere-defined one.

\begin{dfn}  \label{pseudodef}
Let $P$ be an inductive constellation.  Define the {\em pseudoproduct} $\otimes$ on $P$ as follows: for all $s,t\in P$, $s\otimes t:= (s|D(t))\circ t$.  The resulting left restriction semigroup $(P,\otimes,D)$ is the {\em induced left restriction semigroup} on $P$.  
\end{dfn}

Note that in an inductive constellation $P$, we may also write $s\otimes t=((s\cdot D(t))\circ s)\circ t$, where $s\cdot D(t)$ is as in Lemma \ref{newindchar}.

It was shown in \cite{inductive} that the categories of inductive constellations (where the morphisms are ordered radiants as defined in \cite{inductive}) and left restriction semigroups (where the morphisms are semigroup homomorphisms respecting $D$) are isomorphic.  It is easily seen that the natural order in a left restriction semigroup is the same as the natural order when it is viewed as a constellation under this correspondence.

More generally, in a unary semigroup $S$ with unary operation $d$, one can define the {\em restricted product} $x\circ y:=xy$ only if $xd(y)=x$.  In \cite{demonize}, the unary semigroups $S$ for which $(S,\circ,d)$ is a constellation were characterised as the class of demigroups as in Definition \ref{demidef}.

\section{$E$-demigroups, their left $(d,E)$-completions and when they are inductive}  \label{extend}
 
In this section we formally show that $C^d_E(S)$ is a constellation, and characterise when such constellations are inductive and hence are ``left restriction semigroups in disguise".  This is preparation for a much closer examination of the two most important cases, namely when $S$ is a left $E$-monoid or left $E$-monoid with zero.  We start with properties of $C_E(S)$ where $S$ is a semigroup and $\emptyset\neq E\subseteq E(S)$, then extend them to $C^d_E(S)$. 

\begin{pro}  \label{iscon}
Suppose $S$ is a semigroup with $E$ a non-empty subset of its idempotents.  

The partial operation $\circ$ on $C_E(S)$ as in Definition \ref{CES} is well-defined, and $(C_E(S),\circ,D)$ is a constellation $P$ in which $D(P)$ is $\{(e,e)\mid e\in E\}$.  Moreover in $P$, $(e,s)\leq (f,t)$ under the natural quasiorder in $P$ if and only if $ef=e$ and $s=et$.   

If $S$ is an $E$-demigroup, then $C^d_E(S)$ is a subconstellation of $C_E(S)$.  
\end{pro}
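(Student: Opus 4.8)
The plan is to check directly the two conditions in the definition of subconstellation (given in Section~\ref{clrs}): that $C^d_E(S)$ is closed under $D$, and that it is closed under whatever products exist in the ambient constellation $C_E(S)$. Throughout I take the first part of this Proposition as already established, so that $C_E(S)$ is a constellation with $D(C_E(S))=\{(e,e)\mid e\in E\}$ and $(e,s)\circ(f,t)=(e,st)$ precisely when $sf=s$. That $C^d_E(S)\subseteq C_E(S)$ as a set is immediate, since membership in $C^d_E(S)$ already demands $es=s$.

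Closure under $D$ will be routine: for $(e,s)\in C^d_E(S)$ one has $D((e,s))=(e,e)$, and since $e\in E\subseteq E(S)$ gives $ee=e$ while $d(e)=d(e)$ holds trivially, $(e,e)$ satisfies both defining conditions and so lies in $C^d_E(S)$.

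The one step of substance is closure under $\circ$. Suppose $(e,s),(f,t)\in C^d_E(S)$ and $(e,s)\circ(f,t)$ exists in $C_E(S)$; then $sf=s$ and the product is $(e,st)$, and I must show $(e,st)\in C^d_E(S)$. That $(e,st)\in C_E(S)$ is clear, as $e(st)=(es)t=st$. For the remaining condition $d(e)=d(st)$, the plan is: use $d(e)=d(s)$ and $d(f)=d(t)$ from the hypotheses; observe that the hypothesis $sf=s$ can be promoted to $sd(f)=s$ via the $E$-demigroup axiom $fd(f)=f$, namely $sd(f)=(sf)d(f)=s\bigl(fd(f)\bigr)=sf=s$; and then apply the demigroup law $d(xy)=d(xd(y))$ to get
\[
d(st)=d(sd(t))=d(sd(f))=d(s)=d(e).
\]

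I do not anticipate a genuine obstacle. The only point that really needs noticing is the implication $sf=s\Rightarrow sd(f)=s$, and that this is exactly where the $E$-demigroup axiom $ed(e)=e$ earns its keep: without it, the extra membership condition $d(e)=d(s)$ cut out by $C^d_E(S)$ need not survive the product $\circ$. Once that observation is in place, the computation above is a one-line substitution.
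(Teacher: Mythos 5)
Your argument for the final claim is correct: closure of $C^d_E(S)$ under $D$ is immediate, and your chain $d(st)=d(sd(t))=d(sd(f))=d(s)=d(e)$ does establish closure under the inherited products. Your route through the intermediate fact $sd(f)=s$ (via $fd(f)=f$) is a legitimate variant of the paper's computation, which instead writes $d(sd(f))=d(sf)=d(s)$ using only the demigroup law $d(xy)=d(xd(y))$ together with $sf=s$. One small caveat: your closing remark that the axiom $ed(e)=e$ is \emph{essential} here is not borne out --- the paper's version of this step never invokes it --- so while your detour is valid, the moral you draw from it overstates the role of that axiom.

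The genuine problem is that you have proved only the last sentence of the proposition. The statement also asserts (i) that $\circ$ is well-defined on $C_E(S)$, i.e.\ that if $(e,s),(f,t)\in C_E(S)$ and $sf=s$ then $(e,st)$ again satisfies $e(st)=st$; (ii) that $(C_E(S),\circ,D)$ satisfies the constellation axioms \ref{c1}--\ref{c3}, which in particular requires identifying the right identities as exactly the elements $(f,f)$ with $f\in E$ (both directions: every such pair is a right identity because $(e,s)\circ(f,f)$ existing forces $sf=s$ and hence equals $(e,s)$, and conversely a right identity $(f,x)$ must equal $(f,f)\circ(f,x)=(f,fx)=(f,x)$ with $x=f$) and checking that $(e,e)$ is the \emph{unique} right identity with $(e,e)\circ(e,s)=(e,s)$; and (iii) the characterization of the natural quasiorder, namely that $(e,s)\leq(f,t)$ iff $ef=e$ and $s=et$, which follows by unwinding $(e,s)=(e,e)\circ(f,t)$ in both directions. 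These three items constitute the bulk of the paper's proof, and declaring them ``already established'' leaves most of the proposition unproved. You should supply them before the subconstellation argument, since the latter explicitly relies on knowing what $D(C_E(S))$ is and that the product $(e,s)\circ(f,t)=(e,st)$ already lands in $C_E(S)$.
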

\begin{proof}
Defining $(e,s)\circ (f,t)=(e,st)$ on $E\times S$ whenever $sf=s$, the proofs of \ref{c1} and \ref{c2} in Section \ref{clrs} are routine.  Next note that for $(e,s),(f,t)\in C_E(S)$ for which $(e,s)\circ (f,t)$ exists, we have $es=s$, $ft=t$, and $sf=s$.  Then $est=st$ and so $(e,st)\in C_E(S)$.  So $\circ$ is well-defined on $C_E(S)$ and so the properties \ref{c1} and \ref{c2} are inherited from $E\times S$.  

For $e\in E$, $ee=e$ so $(e,e)\in C_E(S)$, and $C_E(S)$ is closed under $D$.    
To show \ref{c3}, we first determine the right identities in $C_E(S)$.  As noted above, $(f,f)\in C_E(S)$ for all $f\in E$.
Now if $(e,s)\circ (f,f)$ exists for some $e,f\in E$ and $s\in S$ then $sf=s$, and so $(e,s)\circ (f,f)=(e,s)$, so $(f,f)$ is a right identity.  Conversely, if $(f,x)$ is a right identity, then $(f,f)=(f,f)\circ(f,x)=(f,x)$, giving $f=x$.  So the set of right identities in $C_E(S)$ is the set of elements of the form $(e,e)$ where $e\in E$.  

Now for all $(e,s)\in C_E(S)$, $(e,e)\circ (e,s)=(e,s)$, and if also $(f,f)\circ(e,s)=(e,s)$ then $f=e$, so $(f,f)=(e,e)$.  So \ref{c3} holds, and $C_E(S)$ is a constellation.

Suppose $(e,s)\leq (f,t)$ under the natural order on $P=C_E(S)$.  Then 
$$(e,s)=D((e,s))\circ (f,t)=(e,e)\circ (f,t)=(e,et),$$
so $ef=e$ and $s=et$.  Conversely, suppose $(e,s),(f,t)\in P$ are such that $ef=e$ and $et=s$.  Then $(e,e)\circ (f,t)$ exists and equals $(e,et)=(e,s)$, so $(e,s)\leq (f,t)$. 

Finally, suppose $S$ is an $E$-demigroup and consider $(e,s),(f,t)\in C^d_E(S)$ such that $(e,s)\circ (f,t)$ exists.  Then in $C_E(S)$, $(e,s)\circ (f,t)=(e,st)$ and indeed $d(st)=d(sd(t))=d(sd(f))=d(sf)=d(s)=d(e)$, so $(e,st)\in C^d_E(S)$.  Trivially $(e,e)\in C^d_E(S)$ for all $e\in E$.  So $C^d_E(S)$ is a subconstellation of $C_E(S)$.
\end{proof}

We note that a similar though dual construction to $C_E(S)$ is used in Section 3 of \cite{constgen} to convert a constellation into a category; much earlier, a similar dual construction was used in Section 9 of \cite{agree} to convert an RC-semigroup satisfying the congruence condition (that is, a right Ehresmann semigroup) into a category.  

In the introduction, we defined left $E$-monoids and left $E$-monoids with zero as special cases of $E$-demigroups and described how they all arose.  We now justify the claims made there.

\begin{pro}  \label{01}
Suppose $S$ is a monoid with $1\in E\subseteq E(S)$.  
\ben
\item If we define $d(s)=1$ for all $s\in S$, then $S$ is a left $E$-monoid.
\item If $S$ has zero and is integral, meaning that it has no zero divisors (so $S\backslash \{0\}$ is a subsemigroup), with $0,1\in E\subseteq E(S)$ and we define $d(0)=0$ with $d(s)\neq 0$ if $s\neq 0$, then $S$ is a left $E$-monoid with zero.  Moreover every left $E$-monoid with zero arises in this way.
\een
\end{pro}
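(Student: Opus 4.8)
The plan is to establish both parts by directly checking that the prescribed unary operation $d$ satisfies the defining conditions of an $E$-demigroup, the converses then amounting to little more than unwinding the definitions. Concretely, for the relevant $d$ one must verify: that $d$ takes its values in $E(S)$, and in fact in $E$; that $d(x)x=x$ and $d(xy)=d(xd(y))$ hold, so that $S$ is a demigroup; and that $ed(e)=e$ for every $e\in E$. Throughout part (2) I read the stated condition on $d$ as $d(0)=0$ and $d(s)=1$ for $s\neq 0$, in line with the discussion immediately preceding the proposition.

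For (1), take $d(s)=1$ for all $s\in S$. Since $1\in E\subseteq E(S)$, the map $d$ takes values in $E$. The law $d(x)x=x$ becomes $1x=x$; the law $d(xy)=d(xd(y))$ becomes $1=d(x\cdot 1)=d(x)=1$; and $ed(e)=e\cdot 1=e$ for each $e\in E$. So $S$ with this $d$ is a left $E$-monoid. Conversely, a left $E$-monoid is by definition a monoid with $1\in E\subseteq E(S)$ whose unary operation is the constant map with value $1$, so it is precisely of the form just described, and there is nothing more to prove.

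For (2), suppose $S$ is integral with $0,1\in E\subseteq E(S)$, and set $d(0)=0$ and $d(s)=1$ for $s\neq 0$. Then $d$ takes values in $\{0,1\}\subseteq E$, and (provided $S\neq\{0\}$, the degenerate case being trivial) $d(0)=0$ while $d(s)=1\neq 0$ for $s\neq 0$. The laws $d(x)x=x$ and $ed(e)=e$ are verified by splitting according to whether the relevant element is $0$, each resulting case being immediate. The one point that requires an argument is $d(xy)=d(xd(y))$: if $y=0$ both sides equal $d(0)=0$; if $y\neq 0$ then $d(y)=1$, so the right-hand side is $d(x)$, which coincides with $d(xy)$ trivially when $x=0$ and, when $x,y\neq 0$, reduces to the claim that $d(xy)=1$, i.e. $xy\neq 0$ --- and this is exactly the hypothesis that $S$ has no zero divisors. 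Hence $S$ with this $d$ is a left $E$-monoid with zero. For the converse, let $S$ be a left $E$-monoid with zero. Since $d$ maps into $E$ we get $0=d(0)\in E$, and $1\in E$ by hypothesis; the formulas $d(0)=0$ and $d(s)=1$ for $s\neq 0$ hold by definition; and $S$ is integral, because $st=0$ with $s,t\neq 0$ would give $0=d(0)=d(st)=d(sd(t))=d(s\cdot 1)=d(s)=1$, forcing $S=\{0\}$ and contradicting $s\neq 0$. So every left $E$-monoid with zero arises in the stated way.

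The substantive step is the single case $x,y\neq 0$ of the law $d(xy)=d(xd(y))$ in the forward implication of (2): this is the one place the no-zero-divisors assumption is used, and, dually, it is the reason that assumption is forced in the converse, so it is really the crux of the statement. Everything else is bookkeeping, the only minor subtlety being the reading of the condition ``$d(s)\neq 0$ for $s\neq 0$'' as ``$d(s)=1$ for $s\neq 0$''.
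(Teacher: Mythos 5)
Your proof is correct and takes essentially the same approach as the paper's: a direct case-by-case verification of the $E$-demigroup axioms (the paper dismisses this as ``an easy case analysis''), with the converse of (2) reduced, exactly as in the paper, to deducing integrality from $d(st)=d(sd(t))$. Your reading of the condition in (2) as $d(s)=1$ for $s\neq 0$ matches the author's intent and is precisely what the paper's own converse argument uses.
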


\begin{proof}
(1) is immediate.  For (2), if $S$ is integral, $0,1\in E\subseteq E(S)$ and we define $d$ as above, an easy case analysis verifies the left $E$-monoid with zero properties.  Conversely, if $S$ is a left $E$-monoid with zero, we must only show that $S$ is integral.  But if $st=0$ then $0=d(st)=d(sd(t))$, so $sd(t)=0$; if $t\neq 0$ then $d(t)=1$ so $s=0$.
\end{proof}

If $S$ is an $E$-demigroup, the normality of $C^d_E(S)$ depends only on the domain elements $D(C_E(S))=\{(e,e)\mid e\in E\}=D(C^d_E(S))$.  The condition that $(e,e)\circ (f,f)$ and $(f,f)\circ (e,e)$ are both defined if and only if $(e,e)=(f,f)$ is equivalent to the condition that $ef=e$ and $fe=f$ imply $e=f$, and so we obtain the following.

\begin{pro}  \label{norpre}
For an $E$-demigroup $S$, $C^d_E(S)$ is normal if and only if $E$ is right pre-reduced.
\end{pro}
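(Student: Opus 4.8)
The plan is to reduce the statement to the known description of normality for constellations recalled in Section~\ref{clrs}, namely that a constellation $P$ is normal if and only if for all $e,f\in D(P)$ the simultaneous existence of $e\circ f$ and $f\circ e$ forces $e=f$. So the first step is to pin down $D(C^d_E(S))$. By Proposition~\ref{iscon}, $C^d_E(S)$ is a subconstellation of $C_E(S)$ and $D(C_E(S))=\{(e,e)\mid e\in E\}$; since $(e,e)\in C^d_E(S)$ for every $e\in E$ (as already observed in the proof of Proposition~\ref{iscon}) and $D$ is inherited by subconstellations, we get $D(C^d_E(S))=\{(e,e)\mid e\in E\}$ too. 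This is the one point that needs a moment's care, but it causes no trouble.

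Next I would translate the existence of products between domain elements into relations on $E$. From the formula for $\circ$ in Proposition~\ref{iscon} (equivalently, from the description of the natural quasiorder given there, restricted to domain elements), for $e,f\in E$ the product $(e,e)\circ(f,f)$ exists precisely when $ef=e$, that is, when $e\leq_r f$; symmetrically $(f,f)\circ(e,e)$ exists precisely when $fe=f$, that is, when $f\leq_r e$. Note that these equivalences use nothing about $d$, so they hold verbatim in the subconstellation $C^d_E(S)$.

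Finally I would assemble the chain of equivalences: $C^d_E(S)$ is normal $\iff$ for all $e,f\in E$, $e\leq_r f$ and $f\leq_r e$ together imply $(e,e)=(f,f)$, i.e.\ $e=f$ $\iff$ $\leq_r$ is a partial order on $E$ $\iff$ $E$ is right pre-reduced. Since this is a two-way chain, both implications of the proposition follow at once. I do not expect any genuine obstacle here; the argument is essentially the unwinding of the paragraph preceding the statement, and the only substantive ingredients are Proposition~\ref{iscon} and the definition of ``right pre-reduced'' from Section~\ref{preredsec}.
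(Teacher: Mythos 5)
Your argument is correct and is essentially the paper's own: the paper likewise observes that $D(C^d_E(S))=\{(e,e)\mid e\in E\}$ and that normality reduces to the condition that $ef=e$ and $fe=f$ force $e=f$, i.e.\ that $E$ is right pre-reduced. No substantive difference.
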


An $E$-demigroup arises from any demigroup $S$, if we set $E=d(S)=\{d(s)\mid s\in S\}$.  The membership condition $(e,s)\in C^d_E(S)$ is $es=s$ and $d(e)=d(s)$, that is, $e=d(s)$ (since for all $t\in S$, $d(t)=d(d(t)t)=d(d(t)d(t))=d(d(t))$ so $d$ is idempotent, and so $d(e)=e$ for all $e\in d(S)$).  Hence, $C^d_E(S)=\{(d(s),s)\mid s\in S\}$, and $(d(s),s)\circ (d(t),t)$ exists if and only if $sd(t)=s$, in which case it equals $(d(s),st)=(d(st),st)$.  This is evidently isomorphic to the constellation product $\circ$ obtained from the demigroup $S$ as in \cite{demonize}: that $s\circ t=st$ should exist if and only if $sd(t)=s$.

\begin{pro}  \label{isomconst} 
Suppose $S$ is an $E$-demigroup, with $E,E'$ two right-equivalent right pre-reduced subsets of $E(S)$ (with for all $e\in E$, $e\, {\mc L}\, e'\in E'$).  Suppose $d(S)\subseteq E'$ and $d(e)=d(e')$ for all $e\in E$. 
Then $S$ is an $E'$-demigroup with respect to the same demigroup operation $d$, and the normal constellations $P=C^d_E(S)$ and $P'=C^d_{E'}(S)$ are isomorphic.  In particular, if $S$ is an $E$-monoid, then $C_E(S)\cong C_{E'}(S)$, and if $S$ is an $E$-monoid with zero, then $C^0_E(S)\cong C^0_{E'}(S)$
\end{pro}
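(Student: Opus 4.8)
The plan is to produce an explicit isomorphism $\rho\colon C^d_E(S)\to C^d_{E'}(S)$ together with its inverse, after disposing of the easy structural claims. That $S$ is an $E'$-demigroup needs only the verification that $e'd(e')=e'$ for each $e'\in E'$ (the condition $d(S)\subseteq E'$ is assumed): writing $e'$ for the partner of $e\in E$ under the given bijection, we have $e'e=e'$ (from $e\sim_r e'$), $ed(e)=e$ (as $S$ is an $E$-demigroup), and $d(e')=d(e)$ (hypothesis), whence $e'd(e')=e'd(e)=(e'e)d(e)=e'(ed(e))=e'e=e'$. Normality of both $C^d_E(S)$ and $C^d_{E'}(S)$ is then immediate from Proposition~\ref{norpre}, since $E$ and $E'$ are right pre-reduced.

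The key point is the choice of map. One cannot take $\rho(e,s)=(e',s)$, because applied to a domain element $(e,e)$ this need not land in $D(C^d_{E'}(S))$. Instead I would set $\rho(e,s)=(e',e's)$, with candidate inverse $\rho'(f',t)=(f,ft)$, where $f\in E$ is the partner of $f'\in E'$. Well-definedness of $\rho$ reduces to $e'(e's)=e's$, which is clear, and $d(e')=d(e's)$, which follows from the membership condition $d(s)=d(e)$ via $d(e's)=d(e'd(s))=d(e'd(e))=d(e')$, using the identity $e'd(e)=e'$ obtained above; well-definedness of $\rho'$ is dual, using $d(f)=d(f')$ and $fd(f')=f$. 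That $\rho$ and $\rho'$ are mutually inverse is one short computation each way using $ee'=e$, $e'e=e'$ and the membership conditions; for example $\rho'(\rho(e,s))=\rho'(e',e's)=(e,e(e's))=(e,(ee')s)=(e,es)=(e,s)$.

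It then remains to check that $\rho$ and $\rho'$ are radiants, after which the conclusion that $\rho$ is an isomorphism follows from the fact that a bijective radiant whose inverse is also a radiant is an isomorphism. Preservation of $D$ is the computation $\rho(D(e,s))=\rho(e,e)=(e',e')=D(e',e's)=D(\rho(e,s))$. For preservation of $\circ$: if $(e,s)\circ(f,t)$ exists then $sf=s$, so by Lemma~\ref{bighelp}(1) also $sf'=s$, hence $(e',e's)\circ(f',f't)$ exists since $(e's)f'=e'(sf')=e's$, and it equals $(e',(e's)(f't))=(e',e'(sf')t)=(e',e'st)=\rho(e,st)$. The argument for $\rho'$ is the same, read with the roles of $E$ and $E'$ exchanged (the hypotheses are symmetric, $d(S)\subseteq E$ holding automatically). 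The only real obstacle is finding the correct map and recognising that the membership condition $d(e)=d(s)$ together with Lemma~\ref{bighelp} is precisely what makes both well-definedness and multiplicativity go through; the remaining verifications are routine.

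Finally, for the two special cases: if $S$ is a left $E$-monoid then $d\equiv 1$, and $1\sim_r 1'$ forces $1'=1$ in a monoid, so $1\in E\cap E'$ and the hypotheses $d(S)\subseteq E'$ and $d(e)=d(e')$ hold automatically, giving $C_E(S)=C^d_E(S)\cong C^d_{E'}(S)=C_{E'}(S)$; if $S$ is a left $E$-monoid with zero the same reasoning gives $0,1\in E\cap E'$ and $C^0_E(S)=C^d_E(S)\cong C^d_{E'}(S)=C^0_{E'}(S)$.
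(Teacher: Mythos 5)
Your proposal is correct and follows essentially the same route as the paper: the same map $\rho((e,s))=(e',e's)$ with inverse $(f',t)\mapsto(f,ft)$, the same well-definedness computation via $d(e's)=d(e'd(s))=d(e'd(e))=d(e')$, and the same radiant checks using Lemma \ref{bighelp}. Your explicit verification of the two special cases (that $1'=1$ and $0'=0$, so the hypotheses hold automatically) is a small amount of detail the paper leaves implicit, but the argument is the same.
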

\begin{proof}
To show that $S$ is an $E'$-demigroup, note that for all $e'\in E'$, $e'd(e')=e'ed(e)=e'e=e'$.  Now define $\rho:P\rightarrow P'$ by setting $(e,s)\rho=(e',e's)$.  This is well-defined since $e'(e's)=e's$, and $d(e's)=d(e'd(s))=d(e'd(e))=d(e'e)=d(e')$.  Evidently, $\rho$ has inverse $\rho':P'\rightarrow P$ given by $(e',s)\rho'=(e,es)$, and each is a radiant, hence an isomorphism, as follows using Lemma \ref{bighelp} several times.  For if $(e,s)\circ (f,t)$ exists in $P$ then $sf=s$, so $sf'=s$ and so $(e,s)\rho\circ (f,t)\rho=(e',e's)\circ(f',f't)$ exists in $P'$, and equals $(e',e'sf't)=(e',e'st)=(e,st)\rho=((e,s)\circ (f,t))\rho$.  Moreover, $D((e,s))\rho=(e,e)\rho=(e',e')=D((e',e's))=D((e,s)\rho)$.
\end{proof}  

In \cite{Lawson}, Lawson showed that it is possible to extend the multiplication in certain types of small categories so that it is defined universally, giving a certain class of bi-unary semigroups: these categories he called Ehresmann.  Lawson was able to establish an isomorphism between such small categories and what he dubbed Ehresmann semigroups, referred to in Section \ref{leftrestetc}, which specialised to an ismorphism between inductive categories and two-sided restiction semigroups.  Natural examples of Ehresmann semigroups include algebras of binary relations under composition, domain and range.

From the point of view of semigroup theory, an interesting question therefore arises: for a semigroup $S$ and set of idempotents $E$, when is $Cat_E(S)$ inductive?  For in such cases, it can be turned into an Ehresmann semigroup.  In fact, this problem does not seem to have an easy answer.  The elements of an inductive category are ordered in two ways, and there is no obvious way to impose a partial order on $Cat_E(S)$. 

However, for an $E$-demigroup $S$, $C^d_E(S)$ is a constellation, and hence has its natural quasiorder.  So it is possible to characterise those choices of $E$-demigroup $S$ for which $C^d_E(S)$ is an inductive constellation. 

\begin{pro}  \label{converse}
	Suppose $S$ is an $E$-demigroup.  If $P=C^d_E(S)$ is inductive, then $E$ is right pre-reduced, and 
	\ben[label=\textup{(I\arabic*)}]
	\item for all $t\in S$ and $e\in E$, there is $t\cdot e\in E$ such that for all $s\in S$: ($ste=st$ and $sd(t)=s$) if and only if $s(t\cdot e)=s$, and  \label{i1}
	\item $(E,\omega^l)$ is a meet-semilattice, and for all $s\in S$ and $e,f\in E$, 
\[se=sf=s \Rightarrow s(e\wedge f)=s. \label{i2}\]
	\een
\end{pro}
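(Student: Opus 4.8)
The plan is to reduce everything to two tools already available: the fact that an inductive constellation is normal (together with Proposition~\ref{norpre}), and the characterisation of inductiveness in Lemma~\ref{newindchar}, which for each $s\in P$ and $e\in D(P)$ supplies an element $s\cdot e\in D(P)$ governing when a right-composite survives a further corestriction. Right pre-reducedness of $E$ is then immediate: $P=C^d_E(S)$ inductive forces $P$ normal, and by Proposition~\ref{norpre} normality of $P$ is exactly the statement that $E$ is right pre-reduced. I will also use the elementary fact that $d$ is idempotent in any demigroup: applying $d$ to $d(a)a=a$ and noting $d(d(a)\,a)=d(d(a)\,d(a))=d(d(a))$ (since $d(a)\in E(S)$) gives $d(d(a))=d(a)$. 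Hence $d(s)\in E$ and $d(s)s=s$ and $d(d(s))=d(s)$, so $(d(s),s)\in C^d_E(S)$ for every $s\in S$; these are the test elements I feed into Lemma~\ref{newindchar}.

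For \ref{i1}, fix $t\in S$ and $e\in E$ and set $a=(d(t),t)\in P$, $\epsilon=(e,e)\in D(P)$. Lemma~\ref{newindchar} yields $a\cdot\epsilon\in D(P)$, say $a\cdot\epsilon=(g,g)$ with $g\in E$; take $t\cdot e:=g$. For arbitrary $s\in S$ apply the lemma with the test element $u=(d(s),s)$: unwinding the definition of $\circ$, $u\circ a$ exists iff $sd(t)=s$, in which case $u\circ a=(d(s),st)$ and $(u\circ a)\circ\epsilon$ exists iff in addition $ste=st$; while $u\circ(a\cdot\epsilon)$ exists iff $sg=s$. The equivalence of Lemma~\ref{newindchar} then reads: $(sd(t)=s$ and $ste=st)$ iff $s(t\cdot e)=s$, which is precisely \ref{i1}.

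For \ref{i2}, recall from Proposition~\ref{iscon} that $D(P)=\{(e,e)\mid e\in E\}$ with its natural order is order-isomorphic to $(E,\leq_r)$ (a partial order, since $E$ is right pre-reduced). Applying Lemma~\ref{newindchar} with $s=\epsilon_1$, $e=\epsilon_2$ for $\epsilon_1,\epsilon_2\in D(P)$ gives $\epsilon_1\cdot\epsilon_2\in D(P)$ such that for every $\delta\in D(P)$, $\delta\leq\epsilon_1\cdot\epsilon_2$ iff ($\delta\leq\epsilon_1$ and $\delta\leq\epsilon_2$) — here one uses that $\delta\circ\epsilon_i$, when it exists, equals $\delta$ because $\epsilon_i$ is a right identity. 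By normality this pins down $\epsilon_1\cdot\epsilon_2$ as the meet $\epsilon_1\wedge\epsilon_2$, so $D(P)$, and hence $(E,\leq_r)$, is a meet-semilattice. For the second half, take $s\in S$ and $e,f\in E$ with $se=sf=s$, let $g\in E$ satisfy $(e,e)\wedge(f,f)=(g,g)$ (i.e.\ $g=e\wedge f$ in $(E,\leq_r)$), and set $u=(d(s),s)$, $\epsilon_1=(e,e)$, $\epsilon_2=(f,f)$. Since $se=s$, $u\circ\epsilon_1$ exists and equals $u$, so $(u\circ\epsilon_1)\circ\epsilon_2=u\circ\epsilon_2$ exists because $sf=s$; by Lemma~\ref{newindchar} with $t=u$, $u\circ(\epsilon_1\cdot\epsilon_2)=u\circ(g,g)$ exists, i.e.\ $sg=s$, which is the desired implication $s(e\wedge f)=s$.

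The genuinely routine parts are the computations of which $\circ$-products exist, which merely unravel the definitions of $C^d_E(S)$ and of $\circ$. The one delicate point is that in \ref{i1} a single application of Lemma~\ref{newindchar} to the fixed element $a=(d(t),t)$ must produce a condition valid for \emph{all} $s\in S$; this works precisely because every $s\in S$ occurs as the second coordinate of a point $(d(s),s)\in P$, so the universal quantifier over test elements $t\in P$ in Lemma~\ref{newindchar} already ranges over all relevant $s$. Establishing that $d$ is idempotent — so that these test elements actually lie in $C^d_E(S)$ — is the small preliminary that makes the whole argument run, and is where I expect the only real (if minor) care to be needed.
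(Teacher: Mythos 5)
Your proof is correct and follows essentially the same route as the paper: right pre-reducedness via normality and Proposition~\ref{norpre}, and both (I1) and (I2) extracted from Lemma~\ref{newindchar} using the test elements $(d(s),s)$, whose membership in $P$ you justify by the (correct) computation $d(d(a))=d(a)$ that the paper asserts without proof. The only divergence is in (I2), where the paper passes to the induced left restriction semigroup to see that $D(P)$ is a semilattice and to compute $(d(s),s)(e,e)(f,f)$, whereas you obtain the meet and the implication $s(e\wedge f)=s$ directly from the existence equivalence of Lemma~\ref{newindchar} applied to domain elements --- a slightly more self-contained variant of the same argument.
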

\begin{proof}
Suppose $P=C^d_E(S)$ is inductive.  Then $P$ is normal so $E$ is right pre-reduced by Proposition \ref{norpre}.  

Pick $t\in S$ and $e\in E$.  Then $(d(t),t)\in P$ since $d$ is idempotent.  By Lemma \ref{newindchar}, there is $(d(t),t)\cdot(e,e)\in D(P)$, which we denote by $(t\cdot e,t\cdot e)$ for some $t\cdot e\in E$, such that for all $(f,s)\in P$,  $((f,s)\circ (d(t),t))\circ(e,e)$ exists if and only if $(f,s)\circ (t\cdot e,t\cdot e)$ exists.  In particular, picking $s\in P$ and letting $f=d(s)$, we have that $((d(s),s)\circ (d(t),t))\circ(e,e)$ exists if and only if $(d(s),s)\circ (t\cdot e,t\cdot e)$ exists, which is to say that $sd(t)=s$ and $ste=st$ if and only if $s(t\cdot e)=s$.
	
For $e,f\in E$, it follows from Proposition \ref{iscon} that $(e,e)\leq (f,f)$ in $D(C_E(S))$ if and only if $ef=e$ in $E$, that is, $e\ \omega^l\ f$ in $E$.  So as a poset under $\omega^l$, $E$ is isomorphic to $D(C_E(S))=D(P)$ under its natural order.  In particular, $(E,\omega^l)$ is a meet-semilattice, and working in the induced left restriction semigroup on $P$ as in Definition \ref{pseudodef}, $(e,e)\otimes (f,f)=(e,e)\wedge (f,f)=(e\wedge f,e\wedge f)$ where ``$\wedge$" is interpreted appropriately in each case.

Next suppose $se=s=sf$ for $s\in S$ and $e,f\in E$.  Then $$(d(s),s)\circ(e,e)=(d(s),s)=(d(s),s)\circ(f,f),$$ so in the induced left restriction semigroup on $P$, $$(d(s),s)=(d(s),s)\otimes (e,e)=(d(s),s)\otimes (f,f)=(d(s),s)\otimes (e,e)\otimes (f,f)=(d(s),s)\otimes (e\wedge f,e\wedge f).$$  
Hence, $(d(s),s)\circ (e\wedge f,e\wedge f)=(d(s),s)$, and so $s(e\wedge f)=s$.
\end{proof}

We use the above result to define the notion of an inductive $E$-demigroup.

\begin{dfn}  \label{inddef}
Suppose $S$ is an $E$-demigroup.  We say $S$ is {\em inductive} if $E$ is right pre-reduced and there is a function $\cdot: S\times E\rightarrow E$ satisfying \ref{i1} and \ref{i2} in Proposition \ref{converse} above.
\end{dfn}  

Proposition \ref{converse} may be rephrased by saying that if $S$ is an $E$-demigroup and $C^d_E(S)$ is inductive then $S$ is inductive.  We now prove its converse.

\begin{pro}  \label{isind}
If $S$ is an inductive $E$-demigroup, then $C^d_E(S)$ is an inductive constellation in which 
$$(f,s)|(e,e)=(f\wedge (s\cdot e),(f\wedge (s\cdot e))s).$$
\end{pro}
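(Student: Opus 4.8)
The plan is to use Lemma~\ref{newindchar}: to show $P = C^d_E(S)$ is inductive, I must verify that $P$ is normal and that for each $(f,s) \in P$ and each domain element $(e,e) \in D(P)$ there exists a domain element, which I will guess to be $(s\cdot e, s\cdot e)$ (where $s\cdot e$ comes from the inductive $E$-demigroup structure), with the property that for all $(g,t)\in P$, the triple product $((g,t)\circ (f,s))\circ (e,e)$ exists if and only if $(g,t)\circ (s\cdot e, s\cdot e)$ exists. Normality is immediate from Proposition~\ref{norpre}, since $E$ is right pre-reduced by hypothesis. For the main equivalence, I first unwind when each side exists: $(g,t)\circ(f,s)$ exists iff $tf = t$, and then the product is $(g, ts)$; so $((g,t)\circ(f,s))\circ(e,e)$ exists iff additionally $tse = ts$. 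Meanwhile $(g,t)\circ(s\cdot e, s\cdot e)$ exists iff $t(s\cdot e) = t$. So I need: ($tf = t$ and $tse = ts$) iff $t(s\cdot e) = t$.

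The bridge to property \ref{i1} of Proposition~\ref{converse} is that \ref{i1} reads: for all $u \in S$, ($use = us$ and $ud(s) = u$) iff $u(s\cdot e) = u$. So I want to feed $u = t$ into \ref{i1}, but \ref{i1} has the hypothesis $ud(s) = u$ in place of my $tf = t$. Here is where membership in $C^d_E(S)$ (rather than the larger $C_E(S)$) earns its keep: since $(f,s)\in C^d_E(S)$ we have $d(f) = d(s)$, and since $fs = s$ and $d$ is a demigroup operation, $f \sim_r d(s)$ will follow --- more carefully, from $d(f) = d(s)$ and $ed(e)=e$ for $e\in E$ one gets $fd(s) = fd(f) = f$, and also $d(s)f = \dots = d(s)$ using $d(s)s = s$ hence $d(s)fs = d(s)s = s$ type manipulations; I expect $f \sim_r d(s)$ to come out, so by Lemma~\ref{bighelp}(1) the condition $tf = t$ is equivalent to $td(s) = t$, and likewise $s\cdot e \in E$ plays nicely. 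Thus the hypothesis $tf=t$ can be swapped for $td(s) = t$, and then \ref{i1} applies directly to give the required equivalence. That \ref{i1} already packages the conjunction "$tse = ts$ and $td(s) = t$" on its left-hand side is exactly what makes this work.

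I also need $(s\cdot e, s\cdot e)$ to actually lie in $D(P) = D(C^d_E(S))$, i.e. that $s\cdot e \in E$ (given, since $\cdot: S\times E \to E$) and that $(s\cdot e)(s\cdot e) = s\cdot e$ (it is idempotent, being in $E \subseteq E(S)$) --- membership $(s\cdot e, s\cdot e)\in C^d_E(S)$ requires $d(s\cdot e) = d(s\cdot e)$, trivially true. Then Lemma~\ref{newindchar} gives that $P$ is inductive and that the co-restriction is $(f,s)|(e,e) = ((f,s)\cdot(e,e)) \circ (f,s)$, where $(f,s)\cdot(e,e)$ is the domain element just identified. The remaining task is to compute this co-restriction explicitly and match it to the stated formula $(f\wedge(s\cdot e),\,(f\wedge(s\cdot e))s)$. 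Here I expect $(f,s)\cdot(e,e)$ to be not $(s\cdot e, s\cdot e)$ itself but its meet with $D((f,s)) = (f,f)$ in the semilattice $D(P) \cong (E,\leq_r)$ --- one must restrict to the elements below $(f,s)$, and property \ref{i2} (that $E$ is a meet-semilattice under $\leq_r$ and $se=sf=s \Rightarrow s(e\wedge f)=s$) is precisely what guarantees $f \wedge (s\cdot e) \in E$ behaves correctly and that $(f\wedge(s\cdot e))\circ(f,s)$-type products go through. Composing $(f\wedge(s\cdot e), f\wedge(s\cdot e)) \circ (f,s) = (f\wedge(s\cdot e), (f\wedge(s\cdot e))s)$ then yields the formula.

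The main obstacle I anticipate is the bookkeeping around replacing $f$ by $d(s)$ (or conversely) in the existence conditions: pinning down exactly which $\sim_r$-relations hold among $f$, $d(s)$, $d(f)$, and $s\cdot e$ from the $E$-demigroup axioms ($d(s)\in E$, $ed(e) = e$, $d(x)x = x$, $d(xy) = d(xd(y))$), and then invoking Lemma~\ref{bighelp} in the right places so that \ref{i1} and \ref{i2} become literally applicable. Once that translation is set up cleanly, the rest is a routine unwinding of $\circ$ together with the semilattice identification of $D(P)$ with $(E,\leq_r)$ from Proposition~\ref{iscon}.
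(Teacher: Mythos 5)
Your overall strategy is the paper's: normality via Proposition \ref{norpre}, then Lemma \ref{newindchar} with an explicit domain element, using \ref{i1} and \ref{i2} to establish the existence equivalence. But there is a genuine error at the pivot of the argument. You claim $f\sim_r d(s)$, so that $tf=t$ could be \emph{replaced} by $td(s)=t$ and \ref{i1} would then give the equivalence of ``$tf=t$ and $tse=ts$'' with ``$t(s\cdot e)=t$'' alone. Only half of $f\sim_r d(s)$ is true: from $d(f)=d(s)$ and $ed(e)=e$ you get $fd(s)=fd(f)=f$, i.e.\ $f\leq_r d(s)$; but $d(s)f=d(f)f=f$, which equals $d(s)$ only when $f=d(s)$. (In a left $E$-monoid $d(s)=1$, so $f\sim_r d(s)$ fails for every $f\neq 1$, and there $td(s)=t$ holds vacuously while $tf=t$ does not.) So $tf=t$ is strictly stronger than $td(s)=t$ and cannot be discarded. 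Moreover, if your claimed equivalence were correct, Lemma \ref{newindchar} would force $(f,s)|(e,e)=(s\cdot e,s\cdot e)\circ(f,s)=(s\cdot e,(s\cdot e)s)$, contradicting the formula you (correctly) state at the end --- the two halves of your proposal are inconsistent with each other.

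The repair is the paper's chain: since $tf=t$ \emph{implies} $td(s)=t$ (via $td(s)=tfd(s)=tfd(f)=tf=t$), you may adjoin $td(s)=t$ as a redundant conjunct, so ``$tf=t$ and $tse=ts$'' becomes ``$tf=t$ and ($td(s)=t$ and $tse=ts$)''; apply \ref{i1} to the bracketed pair to obtain ``$tf=t$ and $t(s\cdot e)=t$''; then \ref{i2} (the meet-semilattice property) converts this to ``$t(f\wedge(s\cdot e))=t$'', i.e.\ to the existence of $(g,t)\circ(h,h)$ with $h=f\wedge(s\cdot e)$. Your closing intuition that the domain element must be the meet with $(f,f)$ is right, but it has to be built into the equivalence itself rather than appended afterwards; with that done, $(f,s)|(e,e)=(h,h)\circ(f,s)=(h,hs)$ follows as you say.
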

\begin{proof} 
Pick $(f,s),(e,e)\in C^d_E(S)$ and let $h=f\wedge (s\cdot e)$.
Then for all $(g,t)\in C^d_E(S)$, the following are equivalent:
\bi
\item the constellation product $((g,t)\circ (f,s))\circ (e,e)$ exists;
\item $tf=t$ and $tse=ts$;
\item $tf=t$, $td(s)=t$ and $tse=ts$;
\item $tf=t$ and $t(s\cdot e)=t$;
\item $t(f\wedge (s\cdot e))=t$;
\item the constellation product $(g,t)\circ (h,h)$ exists.
\ei
The second line above implies the third because, given that $d(f)=d(s)$ (since $(f,s)\in C^d_E(S)$), if $tf=t$ then $td(s)=tfd(s)=tfd(f)=tf=t$. So by Lemma \ref{newindchar}, $C_E(S)$ is inductive, and $(f,s)|(e,e)=D((f,s)|(e,e))\circ (f,s)=(h,h)\circ (f,s)=(h,hs)$.
\end{proof}

\begin{cor}  \label{modrestcor}
Let $S$ be an inductive $E$-demigroup, and let $P=C^d_E(S)$.  Then $P$ can be made into a left restriction semigroup by defining $(e,s)(f,t)=(e\wedge (s\cdot f),(e\wedge (s\cdot f))st)$ for all $(e,s),(f,t)\in P$, and $D((e,s))=(e,e)$.  Moreover $(e,s)\leq(f,t)$ under the natural order if and only if $e\ \omega^l\ f$ and $s=et$. 
\end{cor}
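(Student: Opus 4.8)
The plan is to obtain Corollary~\ref{modrestcor} as a direct consequence of the machinery already assembled: Proposition~\ref{isind} shows that $P=C^d_E(S)$ is an inductive constellation (with the explicit co-restriction formula), and the passage from an inductive constellation to its induced left restriction semigroup is the one recalled just before Lemma~\ref{newindchar}, namely $s\otimes t:=(s|D(t))\circ t$. So the real content of the corollary is just to substitute the co-restriction formula from Proposition~\ref{isind} into this definition and read off the product, and to identify the natural order.

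First I would compute $D((f,t))=(f,f)$, so that $(e,s)\otimes(f,t)=((e,s)|(f,f))\circ(f,t)$. By Proposition~\ref{isind} with the roles of the letters matched up (there $(f,s)|(e,e)$; here $(e,s)|(f,f)$), we have $(e,s)|(f,f)=(h,hs)$ where $h=e\wedge(s\cdot f)$. Then I would apply the constellation product: $(h,hs)\circ(f,t)$ exists because $hsf=hs$ — indeed $h\leq_r s\cdot f$ gives $h(s\cdot f)=h$, and by property~\ref{i1} (applied with $s$ in place of $t$, $f$ in place of $e$, and $h$ in place of $s$, noting $h(s\cdot f)=h$ forces $hsf=hs$ and $hd(s)=h$) one gets $hsf=hs$ — and when it exists it equals $(h,hst)=(e\wedge(s\cdot f),(e\wedge(s\cdot f))st)$, which is exactly the claimed formula. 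The fact that this product is everywhere defined and yields a left restriction semigroup is precisely Proposition~4.5 of \cite{inductive} together with Proposition~\ref{isind}, so no separate verification of the axioms (R1)--(R4) is needed.

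For the natural order statement, I would invoke the remark in Section~\ref{clrs} that the natural order of the induced left restriction semigroup coincides with the natural order of the constellation, and then quote the description of the latter already obtained in Proposition~\ref{iscon}: $(e,s)\leq(f,t)$ iff $ef=e$ and $s=et$, i.e.\ iff $e\leq_r f$ and $s=et$. (One should note that the constellation $C^d_E(S)$ inherits its natural order from $C_E(S)$ since it is a subconstellation, so the formula from Proposition~\ref{iscon} still applies; alternatively normality from Proposition~\ref{norpre} guarantees this is a genuine partial order.)

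The only point requiring any care — and hence the main obstacle, though it is minor — is the bookkeeping in matching the indices of Proposition~\ref{isind} to the present situation and then verifying that the constellation product $(h,hs)\circ(f,t)$ is actually defined, i.e.\ that $hsf=hs$; this uses \ref{i1} in the direction where one already knows $h(s\cdot f)=h$ and needs to deduce $hsf=hs$, which is the ``if'' half of the biconditional in \ref{i1} with the substitution just described (and $hd(s)=h$ follows from $h\leq_r e$, $d(e)=d(s)$ since $(e,s)\in C^d_E(S)$, exactly as in the proof of Proposition~\ref{isind}). Once that is noted, the rest is substitution.
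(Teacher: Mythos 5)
Your proposal is correct and follows essentially the same route as the paper: substitute the co-restriction formula of Proposition~\ref{isind} into the induced product $(e,s)\otimes(f,t)=((e,s)|D((f,t)))\circ(f,t)$ and read off the natural order from Proposition~\ref{iscon}. Your extra check that $(h,hs)\circ(f,t)$ is actually defined (via \ref{i1}) is a detail the paper leaves implicit, but it is consistent with, and slightly more careful than, the published argument.
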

\begin{proof}
As in Definition \ref{pseudodef}, multiplication in the induced left restriction semigroup of $P$ is given by $(e,s)\otimes (f,t)=((e,s)|(f,f))\circ (f,t)=(e\wedge(s\cdot f),(e\wedge(s\cdot f))s)\circ (f,t)=(e\wedge(s\cdot f),(e\wedge(s\cdot f))st)$, which agrees with the definition in the corollary statement; hence $P$ is a left restriction semigroup under the given operations.  By Proposition \ref{iscon}, the natural order is as stated.
\end{proof}

\begin{dfn}  \label{restdefd} 
Let $S$ be an inductive $E$-demigroup.  Denote by $Rest_d(E,S)$ the induced left restriction semigroup structure on $C^d_E(S)$ as in Corollary \ref{modrestcor}:
\[(e,s)(f,t)=(e\wedge (s\cdot f),(e\wedge (s\cdot f))st) \mbox{ and } D((e,s))=(e,e).\]
\end{dfn}

Note that for an inductive $E$-demigroup $S$, $Rest_d(E,S)$ has the same underlying set as $C^d_E(S)$, the unary domain operation is the same in both, and multiplication on $Rest_d(E,S)$ agrees with that on $C^d_E(S)$ whenever the latter is defined.  Moreover the extension of the partial multiplication on $C^d_E(S)$ to that on $Rest_d(E,S)$ is entirely determined by the structure of $C^d_E(S)$ as a constellation.  By Proposition \ref{isomconst}, we therefore obtain the following.

\begin{cor} \label{isomrest}
If $S$ is an inductive $E$-demigroup, and $E'\sim_l E$ with $E\ni e\, {\mc L}\, e'\in E'$, and $d(e')=d(e)$ for all $e\in E$, then $S$ is also an inductive $E'$-demigroup.  Moreover, $Rest_d(E,S)\cong Rest_d(E',S)$.
\end{cor}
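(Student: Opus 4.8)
The plan is to reduce the statement to Proposition \ref{isomconst} together with the observation that passing from an inductive constellation to its induced left restriction semigroup respects isomorphisms. The only hypothesis of Proposition \ref{isomconst} not handed to us directly is $d(S)\subseteq E'$, so the first and principal step is to establish this. I would begin by recording the derived demigroup identity $d(d(x))=d(x)$, which follows from $d(x)=d(d(x)x)=d(d(x)d(x))=d(d(x))$ using $d(x)x=x$, the law $d(ab)=d(ad(b))$, and idempotency of $d(x)$. Now fix $s\in S$; since $S$ is an $E$-demigroup, $e:=d(s)\in E$, so let $e'\in E'$ be its partner under the given bijection, so that $e\sim_r e'$ and $d(e')=d(e)$. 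Then $d(e')=d(e)=d(d(s))=d(s)=e$, whence $e'=d(e')e'=ee'$ by $d(x)x=x$; but $e\sim_r e'$ also gives $ee'=e$, so $e'=e=d(s)\in E'$. Thus $d(S)\subseteq E'$, and in fact the partner bijection fixes $d(S)$ pointwise.

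With $d(S)\subseteq E'$ in hand, Proposition \ref{isomconst} applies as stated: $S$ is an $E'$-demigroup with the same $d$, and the constellations $P=C^d_E(S)$ and $P'=C^d_{E'}(S)$ are isomorphic, $E'$ being right pre-reduced by hypothesis (it is one of a right-equivalent pair of right pre-reduced sets). To upgrade ``$E'$-demigroup'' to ``inductive $E'$-demigroup'' I would argue indirectly: $S$ is an inductive $E$-demigroup, so $P=C^d_E(S)$ is an inductive constellation by Proposition \ref{isind}; inductiveness is preserved under constellation isomorphism, since an isomorphism preserves $\circ$, $D$ and the natural (quasi)order, hence co-restrictions, hence the defining conditions of an inductive constellation (equivalently, it visibly preserves the criterion of Lemma \ref{newindchar}); therefore $P'=C^d_{E'}(S)$ is inductive, and Proposition \ref{converse} then gives that $S$ is an inductive $E'$-demigroup.

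For the remaining claim, by Definition \ref{restdefd} the semigroups $Rest_d(E,S)$ and $Rest_d(E',S)$ are precisely the induced left restriction semigroups on the inductive constellations $P$ and $P'$. Since the multiplication $s\otimes t=(s|D(t))\circ t$ and the operation $D$ of an induced left restriction semigroup are built solely from the constellation operations of $P$, with co-restriction itself determined by $\circ$ and $D$, the isomorphism $P\cong P'$ furnished by Proposition \ref{isomconst} is automatically an isomorphism $Rest_d(E,S)\cong Rest_d(E',S)$ of left restriction semigroups; alternatively one invokes the isomorphism of the categories of inductive constellations and of left restriction semigroups recalled in Section \ref{clrs}, under which isomorphisms correspond to isomorphisms. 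The genuinely delicate point is the verification of $d(S)\subseteq E'$ in the first paragraph; once that is secured, everything else is an assembly of Propositions \ref{isomconst}, \ref{isind} and \ref{converse}. (Should one prefer a more self-contained route to inductivity of $S$ as an $E'$-demigroup, one could instead transport the structure map along the partner bijection, setting $s\cdot' e':=(s\cdot e)'$, and check (I1) and (I2) for $E'$ directly using part (1) of Lemma \ref{bighelp} and the poset isomorphism $(E,\leq_r)\cong(E',\leq_r)$, but this is longer.)
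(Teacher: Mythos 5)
Your proof is correct and follows essentially the same route as the paper: reduce to Proposition \ref{isomconst} together with the observation that the induced left restriction semigroup structure is entirely determined by the constellation, transporting inductivity via Propositions \ref{isind} and \ref{converse} (exactly as the paper does explicitly in Proposition \ref{lmequiv}). Your verification that $d(S)\subseteq E'$ — indeed that the partner bijection fixes $d(S)$ pointwise, via $d(d(s))=d(s)$ — correctly supplies a hypothesis of Proposition \ref{isomconst} that the corollary's statement omits and the paper leaves implicit.
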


In \cite{demonize}, it was shown how to view a demigroup $S$ as a constellation by defining $s\cdot t$ only if $sd(t)=s$ and retaining $d$ as the domain operation; indeed demigroups were shown to be the most general class of unary algebras for which this partial operation and $d$ together give a constellation.  It is easily seen that the constellation $(S,\cdot,d)$ is isomorphic to $C^d_{d(S)}(S)$ under the correspondence $s\leftrightarrow (d(s),s)$.  Theorem $5.1$ in \cite{demonize} characterises when $(S,\cdot,d)$ is inductive in terms of the demigroup $S$; such demigroups were called inductive in \cite{demonize}.  Because of the isomorphism between $(S,\cdot,d)$ and $C^d_{d(S)}(S)$, this characterisation arises as a special case of Propositions \ref{converse} and \ref{isind}.  But, in what follows, the most important special cases arise from left $E$-monoids and left $E$-monoids with zero.

Viewing a left $E$-monoid $S$ as an $E$-demigroup in the usual way (by setting $d(s)=1$ for all $s\in S$), the following is immediate from Propositions \ref{converse} and \ref{isind}.

\begin{cor} \label{Emodcor}
Let $S$ be a left $E$-monoid.  Then  $S$ is inductive if and only if $E$ is right pre-reduced, and the following hold:
	\ben[label=\textup{(I\arabic*$^\prime$)}]
	\item for all $t\in S$ and $e\in E$, there is $t\cdot e\in E$ such that for all $s\in S$, $ste=st$ if and only if $s(t\cdot e)=s$, and  \label{i1'}
	\item $(E,\omega^l)$ is a meet-semilattice, such that all $s\in S$ and $e,f\in E$, 
\[se=sf=s \Rightarrow s(e\wedge f)=s. \label{i2'}\]
	\een
\end{cor}

Note that \ref{i1'} simply asserts that $(te,t)$ has a (necessarily unique) left equalizer in $E$.  

\begin{pro} \label{0agrees}
Let $S$ be a left $E$-monoid with zero (so that $d(0)=0$ and $d(s)=1$ if $s\neq 0$), and let $S'$ be the left $E$-monoid obtained from $S$ by viewing it as an $E$-demigroup as in the first part of Proposition \ref{01} (by setting $d'(s)=1$ for all $s$).  Then $S$ is inductive if and only if $S'$ is, and the multiplication in the induced left restriction semigroup of $C^0_E(S)$ agrees with that in $C_E(S')$.
\end{pro}
\begin{proof}
Suppose $S$ is inductive. Then \ref{i1} and \ref{i2} in Proposition \ref{converse} are satisfied.  Pick $t\neq 0$ and $e\in E$; then $d(t)=1$, and \ref{i1} in Proposition \ref{converse} asserts that $(te,t)$ has an equalizer $t\cdot e$ in $E$.  Now let $t=0$ so that $d(t)=0$, and then we see that $(t,te)=(0,0)$ have equalizer $1\in E$.  So \ref{i1'} in Corollary \ref{Emodcor} is satisfied, and \ref{i2'} is the same as \ref{i2}, and so $S'$ is inductive as well.  

Conversely, suppose $S'$ is inductive.  Then \ref{i1'} and \ref{i2'} in Corollary \ref{Emodcor} are satisfied.  Again, for $t\neq 0$, $d(t)=d'(t)=1$ and \ref{i1'} in Corollary \ref{Emodcor} immediately gives \ref{i1} in Proposition \ref{converse}.  For $t=0$, $d(0)=0$ and then \ref{i1} in Proposition \ref{converse} asserts that for all $s$, $0=0$ and $s0=s$ if and only if $s(0\cdot e)=s$, which is true if $0\cdot e$ is chosen to be $0$.   So whether or not $t=0$, \ref{i1} is satisfied by the $E$-demigroup $S$. Again, \ref{i2} is the same as \ref{i2'}.  So both \ref{i1} and \ref{i2} in Proposition \ref{converse} are satisfied and so $S$ is inductive.

In the above, note that the choice of equalizer of $(t,te)$ is the same in either $S$ or $S'$, providing $t\neq 0$.  Consider $(e,s),(f,t)\in C^0_E(S')$.  Then the product $(e,s)(f,t)=(e\wedge (s\cdot f),(e\wedge (s\cdot f))st)$, which agrees with the product calculated in $C_E(S)$ providing $s\neq 0$.  But if $s=0$ then $e=0$ and so we obtain $(0,0)$ for this product whether calculated in $C^0_E(S')$ or $C_E(S)$.
\end{proof}

It follows that for a left $E$-monoid with zero $S$, precisely the same conditions characterise the inductive property regardless of whather $S$ is viewed as a left $E$-monoid with zero or as a left $E$-monoid.

\begin{cor} \label{Emodcor2}
Let $S$ be a left $E$-monoid with zero.  Then $S$ is inductive if and only if $E$ is right pre-reduced, and the following hold:
	\ben[label=\textup{(I\arabic*$^\prime$)}]
	\item for all $t\in S$ and $e\in E$, there is $t\cdot e\in E$ such that for all $s\in S$, $ste=st$ if and only if $s(t\cdot e)=s$, and  
	\item $(E,\omega^l)$ is a meet-semilattice, and for all $s\in S$ and $e,f\in E$, 
\[se=sf=s \Rightarrow s(e\wedge f)=s.\]
	\een
\end{cor}

Next we specialise Definition \ref{restdefd} to our two important special cases, and spell out exactly what the elements look like and how multiplication works.

\begin{dfn}  \label{restdef} 
Let $S$ be an inductive left $E$-monoid, so that $d(s)=1$ for all $s\in S$.  Write $Rest(E,S)$ rather than $Rest_d(E,S)$, the {\em semigroup left $E$-completion of $S$}.  Thus,
$$Rest(E,S)=\{(e,s)\mid e\in E, s\in S,es=s\},$$
with multiplication given by $$(e,s)(f,t)=(e\wedge (s\cdot f), (e\wedge (s\cdot f))st),$$ 
and $D((e,s))=(e,e)$.

If $S$ is integral, viewing it as a left $E$-monoid with zero as in the second part of Proposition \ref{01}, write $Rest_0(E,S)$ rather than $Rest_d(E,S)$, the {\em zero-reduced semigroup left $E$-completion of $S$}.  Thus,  
$$Rest_0(E,S)=\{(e,s)\in Rest(E,S) \mid e=0\Rightarrow s=0\},$$
with multiplication and $D$ as defined for $Rest(E,S)$ above.

If $S$ is an inductive right $E$-monoid, denote by $RRest(S,E)$ and $RRest_0(S,E)$ the obvious dually defined right restriction monoids.  
\end{dfn}

If $S$ is an inductive $E$-monoid with zero, then by Proposition \ref{0agrees}, $Rest_0(E,S)$ is a subalgebra of the left restriction semigroup $Rest(E,S)$ viewed as a unary semigroup.

The following says that every element of $Rest(E,S)$ (hence also of $Rest_0(E,S)$ if $S$ is integral) may be factored into a product of a domain element and an element of domain $1$.

\begin{cor}  \label{factor}
Suppose $S$ is an inductive left $E$-monoid and $T=Rest(E,S)$.  Every element of $T$ may be written as $u=D(u)v$ for some $v\in T_1$; moreover if $u\in T$ and $u=gv$ for some $g\in D(T)$ and $v\in T_1$, then $g=D(u)$. 
\end{cor}
\begin{proof}
For $(e,s)\in T$, we have $(e,s)=(e,e)(1,s)$, and if $(e,s)=(f,f)(1,t)=(f,ft)$, then $e=f$, so $(f,f)=D((e,s))$.
\end{proof} 

The factorization in the above corollary is not in general unique, since, starting with $t\in S$, and selecting $e\in E$, we have that $(e,e)(1,t)=(e,et)=(e,e)(1,et)$, and $t,et\in S$ may be unequal.

\section{Characterising left restriction monoids that are semigroup left $E$-completions}  \label{lrchar}

Building on the results of Section \ref{extend}, we may ask which left restriction semigroups $M$ arise as $Rest_d(E,S)$ for some inductive $E$-demigroup $S$.  But the answer is trivial: all of them.  

Given a left restriction semigroup $(S,\times,D)$, letting $d=D$, it is immediate that $S$ is a demigroup; hence it determines a (necessarily inductive) constellation $(S,\cdot, d)$ under the restricted product $s\cdot t=st$ which only exists when $sd(t)=s$, as discussed in the comments following Corollary \ref{isomrest}.  We can also view it as a $d(S)$-demigroup and form $C^d_{d(S)}(S)$ which, as noted in those comments, is isomorphic to $(S,\cdot,d)$.  Since the latter is inductive, it has an induced left restriction semigroup structure $S_d$, called the demonization of $S$ in \cite{demonize}.  So the induced left restriction semigroup obtained from $C^d_{d(S)}(S)$, namely $Rest_d(d(S),S)$, must be isomorphic to $S_d$. As noted in \cite{demonize}, the demonization of a left restriction semigroup is nothing but a copy of that same left restriction semigroup, and so $S\cong Rest_d(d(S),S)$.

The situation is more complex and interesting if we take $S$ to be a left $E$-monoid, or a left $E$-monoid with zero and extend it via $Rest(E,S)$ or $Rest_0(E,S)$.  Moreover, several important examples have this form, as we see in Section \ref{egs}.

Recall that if $S$ is a left restriction monoid, $S_1$ is the submonoid $\{s\in S\mid D(s)=1\}$.  

\begin{dfn} 
Let $S$ be a left restriction monoid.  Then $f\in E(S)$ is a {\em large idempotent} if $f\in E(S_1)$.  We say that $S$ has {\em enough large idempotents} if for all non-zero $e\in D(S)$, there exists $f\in E(S_1)$ such that $e\, {\mc L}\, f$.  We say that $S$ has {\em precisely enough large idempotents} if it has enough large idempotents and for all $f\in E(S_1)$ there is $e\in D(S)$ such that $e\, {\mc L}\, f$.  Let $E\subseteq E(S)$ consist of exactly one $e'\in E(S_1)$ for each non-zero $e\in D(S)$ such that $e'\, {\mc L}\, e$, as well as $0\in D(S)$ if $S$ has a zero; then we say {\em $E$ is a set of enough large idempotents}.
\end{dfn}

Recall that the generalised Green's relation $\widetilde{\mc {R}}_E$ can be defined on any semigroup $S$, once $E\subseteq E(S)$ has been chosen: for all $s,t\in S$, $(s,t)\in \widetilde{\mc {R}}_E$ if and only if for all $e\in E$, $es=s$ if and only if $et=t$.  (There is a dually defined relation $\widetilde{\mc {L}}_E$.)  The relation $\widetilde{\mc {R}}_E$ is asociated with the one-sided version of $E$-semiabundance, considered by various authors.  If $S$ is a left restriction monoid and we choose $E=D(S)$, then $(s,t)\in \widetilde{\mc {R}}_E$ if and only if $D(s)=D(t)$.  It follows easily that another way to say that $S$ has enough large idempotents is to say that for every non-zero $e\in D(S)$, there is an idempotent $f$ such that $e\ {\mc L}\ f\ \widetilde{\mc {R}}_{D(S)}\ 1$.  

\begin{eg}  A left restriction semigroup with enough large idempotents.  \label{enougheg} \end{eg}
Let $X=\{x,y\}$ be a two-element set.  Then $S=PT_X=\{1,0,p_x,p_y,e,f,i,j,k\}$, where $1$ is the identity map, $0$ is the empty function, $p_x=\{(x,x),(y,x)\}$, $p_y=\{(x,y),(y,y)\}$, $e_x=\{(x,x)\}$, $e_y=\{(y,y)\}$, $i=\{(x,y),(y,x)\}$, $j=\{(x,y)\}$, $k=\{(y,x)\}$.  Then $E=D(S)=\{1,0,e_x,e_y\}$, $E(S)=D(S)\cup \{p_x,p_y\}$, and $S_1=\{1,p_x,p_y,i\}$, which is of course equal to $T_X$.  Note that the idempotents in the ${\mc L}$-classes are $\{0\},\{1\},\{p_x,e_x\}$ and $\{p_y,e_y\}$, and the idempotents in the $\tilde{\mc {R}}_E$-classes are $\{0\}$, $\{e_y\}$ and $\{1,p_x,p_y\}$.  We consider an ``egg-box diagram" for $E(S)$, but one using ${\mc L}$ for the columns and $\tilde{\mc {R}}_E$ for the rows, rather than ${\mc L}$ and ${\mc R}$.
\bigskip

\centerline{\begin{tabular}{|c|c|c|c|}
\hline
0&&&\\
\hline
&1&$p_x$&$p_y$\\
\hline
&&$e_x$&\\
\hline
&&&$e_y$\\
\hline
\end{tabular}}
\bigskip

Evidently, for each non-zero element of $D(S)$ (that is, one of $1,e_x,e_y$), there is an element from the row containing $1$ in its column, so $S$ has enough large idempotents.  Indeed, in this case it has precisely enough large idempotents, since for each idempotent in the row containing $1$, there is an element of $D(S)$ in its column.  We shall see in Subsection \ref{pareg} that this observation about $PT_X$ generalises.

\begin{pro}  \label{key}
Suppose $S$ is an inductive left $E$-monoid.  Then $M=Rest(E,S)$ has enough large idempotents.  If $S$ is integral then $M=Rest_0(E,S)$ has enough large idempotents.
\end{pro}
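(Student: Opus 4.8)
The plan is to reduce the statement to a single routine computation with the multiplication formula of Corollary \ref{modrestcor}, namely $(e,s)(f,t)=(e\wedge(s\cdot f),(e\wedge(s\cdot f))st)$ together with $D((e,s))=(e,e)$. First I would make the monoid structure of $M$ explicit. The key preliminary observation is that whenever $t\in S$ and $f\in E$ satisfy $tf=t$, we have $t\cdot f=1$: indeed $stf=st$ then holds for all $s\in S$, so by \ref{i1'} of Corollary \ref{Emodcor} we get $s(t\cdot f)=s$ for all $s$, and putting $s=1$ yields $t\cdot f=1$. In particular $s\cdot 1=1$ for every $s\in S$ and $e\cdot e=1$ for every $e\in E$.

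Granting this, one checks directly that $(1,1)$ is the identity $1_M$ of $M$, so that $D(M)=\{(e,e)\mid e\in E\}$, the submonoid $M_1=\{x\in M\mid D(x)=1_M\}$ is $\{(1,s)\mid (1,s)\in M\}$, and, since $(1,s)(1,s)=(1,s^2)$, its idempotents form $E(M_1)=\{(1,e)\mid e\in E(S),\ (1,e)\in M\}$. The heart of the proof is then the assertion that $(e,e)\sim_r(1,e)$ in $M$ for every $e\in E$. Using $e\cdot 1=1$, $e\cdot e=1$ and $e\wedge 1=e$ (as $1$ is the top of $(E,\leq_r)$), the product formula gives $(e,e)(1,e)=(e\wedge 1,(e\wedge 1)ee)=(e,e)$ and $(1,e)(e,e)=(1\wedge 1,(1\wedge 1)ee)=(1,e)$, which is precisely $(e,e)\sim_r(1,e)$.

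Finally I would assemble these pieces. Given a non-zero $(e,e)\in D(M)$, the pair $(1,e)$ belongs to $M$, is idempotent because $e^2=e$, has domain $(1,1)=1_M$, hence lies in $E(M_1)$, and is $\sim_r$-related to $(e,e)$; so $M=Rest(E,S)$ has enough large idempotents. In the integral case $M=Rest_0(E,S)$ the only extra point is that membership $(1,e)\in C^0_E(S)$ must be verified, and this needs exactly $e\neq 0$, which is the hypothesis that $(e,e)$ is non-zero (the product formula is the same by Proposition \ref{0agrees}). I expect the main obstacle to be bookkeeping rather than depth: correctly pinning down $1_M$, $D(M)$, $M_1$ and $E(M_1)$, and isolating the two identities $s\cdot 1=1$ and $e\cdot e=1$ that make the central computation collapse; once these are in hand, the verification that $(e,e)\sim_r(1,e)$ is a routine substitution into the multiplication formula.
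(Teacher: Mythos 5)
Your proposal is correct and follows essentially the same route as the paper: both hinge on showing $(1,e)\in E(M_1)$ and $(e,e)\sim_r(1,e)$ via the identities $e\cdot e=1$ and $s\cdot 1=1$, with your version merely spelling out the justification of these identities, the identification of $1_M$ and $M_1$, and the membership check $(1,e)\in C^0_E(S)$ in the integral case, all of which the paper leaves implicit.
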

\begin{proof}  For $e\in E$, note that $e\cdot e=1$.  A typical non-zero element of $D(M)$ is $(e,e)$ where $e\neq 0$.  Then $(1,e)\in M$, and $(1,e)(1,e)=(1,e)\circ(1,e)=(1,ee)=(1,e)$, so $(1,e)\in E(M_1)$, similarly $(e,e)(1,e)=(e,ee)=(e,e)$, and 
$(1,e)(e,e)=(1,ee)=(1,e)$, so $(e,e)\, {\mc L}\, (1,e)$.  So $M$ has enough large idempotents.
\end{proof} 

Note that if $S$ is an inductive left $E$-monoid, then in $M=Rest(E,S)$, $M_1\cong S$ as monoids, under the mapping $(1,s)\mapsto s$ for all $s\in S$, while if $S$ is integral, then in $M=Rest_0(E,S)$, $M_1\cup \{0\}\cong S$ under the mapping $(1,s)\mapsto s$ for all non-zero $s\in S$ with $(0,0)\mapsto 0$.   Moreover, $M$ has enough large idempotents by Proposition \ref{key}.

Conversely, we have the following two results. 

\begin{thm}  \label{char} 
Let $S$ be a left restriction monoid without zero and with enough large idempotents.  For each $e\in D(S)$, let $e'\in E(S_1)$ be such that $e\, {\mc L}\, e'$, and let $E=\{e'\mid e\in D(S)\}$.  Then $S_1$ is an inductive left $E$-monoid in which, if $t\in S_1$ then $t\cdot e'=D(te)'$ for all $e'\in E$, and $S\cong Rest(E,S_1)$.
\end{thm}
\begin{proof} 
First note that $E$ is right pre-reduced, as if $e'\, {\mc L}\, f'$ then $e\, {\mc L}\, f$ and so $e=f$, so $e'=f'$.  Moreover, if $e\, {\mc L}\, 1$ for some $e\in E$, then $e=1$, so $1\in E$, which corresponds to $1\in D(S)$.  So $S_1$ is an $E$-monoid.  Note also that $E$ is a meet-semilattice under $\omega^l$ since $E$ and $D(S)$ are isomorphic as posets under $\omega^l$ as discussed prior to Lemma \ref{bighelp}.  

We first show that $S_1$ satisfies \ref{i1'} in Proposition \ref{Emodcor}.  For $f'\in E$, we want to show that for all $s\in S_1$, $stf'=st$ if and only if $s=sD(tf)'$.  By the first part of Lemma \ref{bighelp}, it is necessary and sufficient to show that $stf=st$ if and only if $sD(tf)=s$.  Now if $stf=st$ then $sD(tf)=D(stf)s=D(st)s=D(sD(t))s=s$ since $D(t)=1$, while if $sD(tf)=s$ then $stf=sD(tf)t=st$. 

Now we show $S_1$ satisfies \ref{i2'} in Proposition \ref{Emodcor}.  We must show that for $s\in S_1$ and $e',f'\in E$, $s=se'=sf'$ imply $s=s(e'\wedge f')=s(ef)'$; again by the first part of Lemma \ref{bighelp}, it is necessary and sufficient to show that $se=sf=s$ implies $sef=s$, for all $s,t\in S_1$ and $e,f\in D(S)$.  But this is almost immediate.  
	
Let $T=Rest(E,S_1)$, as in Definition \ref{restdef} (see also Definition \ref{inddef}).  We claim that the map $\theta:T\rightarrow S$ given by $(e',s)\theta=es$ is an isomorphism.
	
First, suppose $(e',s),(f',t)\in T$.  If $(e',s)\theta=(f',t)\theta$ then $es=ft$, and by the left congruence property, $D(es)=D(eD(s))=D(e)=e$ (since $D(s)=1$), and similarly $D(ft)=f$, so $e=f$, and so $es=et$, so $s=e's=e't=t$ by the second part of part of Lemma \ref{bighelp}.   So $(e,s)=(f,t)$, and so $\theta$ is injective.  
		
Now pick $s\in S$.  Let $t=D(s)'s$, so $D(t)=D(D(s)'s)=D(D(s)'D(s))=D(D(s)')=1$, so $t\in S_1$, and 
$D(s)t=D(s)D(s)'s=D(s)s=s$, so $s\leq t$ in $S$.  Then $(D(s)',t)\theta=D(s)t=s$.  So $\theta$ is surjective, and is therefore a bijection.
	
Now suppose $(e',s),(f',t)\in T$.  Then $((e',s)(f',t))\theta=(e'\wedge(s\cdot f'),(e'\wedge(s\cdot f'))st)\theta=eD(sf)st=esft=(e',s)\theta(f',t)\theta$.  So $\theta$ is a semigroup homomorphism.
	
Finally, we must show $D$ is respected by $\theta$.  But $D((e',s))\theta=(e',e')\theta=ee'=e$, while $D((e',s)\theta)=D(es)=D(eD(s))=D(e1)=e$ also.
	
So $\theta$ is an isomorphism and $S\cong T$ as left restriction monoids.
\end{proof}

\begin{thm}  \label{char0}
	Let $S$ be a left restriction monoid with zero having enough large idempotents.  For each non-zero $e\in D(S)$, let $e'\in E(S_1)$ be such that $e\, {\mc L}\, e'$, and also let $0'=0$.  Let $E=\{e'\mid e\in D(S)\}$.  
Let $S'=S_1\cup\{0\}$.  Then $S'$ is an inductive left $E$-monoid with zero in which 
\bi 
\item if $t\in S'$ and $t\neq 0$ then $t\cdot e'=D(te)'$ for all $e'\in E$, and
\item if $S$ has zero then $0\cdot g=1$ for all $g\in E$,
\ei
and $S\cong Rest_0(E,S')$.
\end{thm}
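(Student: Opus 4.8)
The plan is threefold: (i) set up the bijection $e\mapsto e'$ from $D(S)$ onto $E$ and show it is an order-isomorphism $(D(S),\leq)\to(E,\leq_r)$; (ii) use this, together with (R4), (R5) and Lemma~\ref{bighelp}, to verify via Corollary~\ref{Emodcor} that $S'$ is an inductive left $E$-monoid with the stated operation $t\cdot e'$; (iii) exhibit an explicit isomorphism between $S$ and $Rest(E,S')$, respectively $Rest_0(E,S')$.

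For (i): $D(S)$ is a semilattice, hence $\sim_r$-trivial, so $e\sim_r e'$ recovers $e$ from its chosen representative $e'$; thus $e\mapsto e'$ is a bijection onto $E$, with $1\mapsto 1$ (only $1$ is $\sim_r 1$) and $0\mapsto 0$ in the zero case. Using the defining relations $e'=e'e$, $e=ee'$ (and their $f$-analogues) a short computation gives $e'\leq_r f'\iff e\leq f$, so $e\mapsto e'$ is an order-isomorphism; in particular $1\in E\subseteq E(S')$, $E$ is right pre-reduced, and $(E,\leq_r)$ is a meet-semilattice with $e'\wedge f'=(ef)'$. That $S'$ is a monoid, and integral when $S$ has a zero (because $D(st)=D(sD(t))=1$ forces $st\neq 0$ for $s,t\in S_1$), is then immediate, so $S'$ is a left $E$-monoid, with zero when $S$ has one.

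For (ii): take $t\cdot e':=D(te)'$ when $t\neq 0$ and $0\cdot g:=1$, and check conditions (I1$'$) and (I2$'$) of Corollary~\ref{Emodcor}. The engine for (I1$'$) is the identity $ste=(sD(te))t$, obtained from $te=tD(e)$ (as $e\in D(S)$) and (R4) in the form $tD(e)=D(te)t$; together with (R4) as $sD(te)=D(ste)s$ and (R5) as $D(st)=D(s)=1$ for $s,t\in S_1$, this yields for $s\in S_1$ the chain $ste'=st\iff ste=st\iff sD(te)=s\iff sD(te)'=s$, where the outer equivalences are Lemma~\ref{bighelp}(1) applied to $e\sim_r e'$ and to $D(te)\sim_r D(te)'$; the cases $s=0$ and $t=0$ are immediate. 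For (I2$'$), the implication $se'=s\Rightarrow se=s$ (from $se=(se')e=s(e'e)=se'=s$, since $e'e=e'$) reduces $se'=sf'=s$ to $se=sf=s$, hence $s(ef)=s$, hence $s(e'\wedge f')=s(ef)'=s$ again by Lemma~\ref{bighelp}(1). In the zero case one passes to the left-$E$-monoid-with-zero reading via Proposition~\ref{0agrees}; reconciling the two notions of ``inductive'' here is the step needing most care.

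For (iii): define $\Psi\colon Rest(E,S')\to S$ by $\Psi((e',u))=eu$ and $\Phi\colon S\to Rest(E,S')$ by $\Phi(s)=(D(s)',D(s)'s)$; $\Phi$ is well-defined since for $s\neq 0$, $D(D(s)'s)=D(D(s)'D(s))=D(D(s)')=1$, so $D(s)'s\in S_1$, and $\Phi(0)=(0,0)$. Using $D(s)D(s)'=D(s)$ and $e'u=u$ one checks $\Psi\circ\Phi=\mathrm{id}_S$ and $\Phi\circ\Psi=\mathrm{id}$, so both are bijections. The crux is that $\Psi$ is a homomorphism respecting $D$: by Corollary~\ref{modrestcor}, $(e',u)(f',v)=(h',h'uv)$ with $h'=e'\wedge(u\cdot f')=(eD(uf))'$, so $\Psi((e',u)(f',v))=h\,(h'uv)=huv=(eD(uf))uv=e\,(D(uf)u)\,v=e(uf)v$, using $hh'=h$ and (R4) in the form $D(uf)u=uf$; this equals $(eu)(fv)=\Psi((e',u))\Psi((f',v))$. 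Also $\Psi(D((e',u)))=\Psi((e',e'))=ee'=e=D(eu)$, using $D(eu)=D(eD(u))=D(e)=e$. Products with a zero entry collapse to $0$ on both sides. Hence $\Psi$ is an isomorphism of left restriction monoids, giving $S\cong Rest_0(E,S')$ when $S$ has a zero and $S\cong Rest(E,S')$ otherwise; the displayed formulas for $t\cdot e'$ and $0\cdot g$ were established in step (ii). I expect the main obstacle to be not a single deep step but getting the two identities $ste=(sD(te))t$ and $D(uf)u=uf$ to do the work, together with the care required in the zero case to align Corollary~\ref{Emodcor}'s operation $t\cdot e'$ with the demigroup-based definition of $Rest_0(E,S')$ via Proposition~\ref{0agrees}.
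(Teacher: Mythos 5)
Your proposal is correct and follows essentially the same route as the paper: the same verification of (I1$'$) and (I2$'$) via (R4)/(R5) and Lemma \ref{bighelp}, and the same map $(e',u)\mapsto eu$ for the isomorphism (the paper proves bijectivity by separate injectivity/surjectivity arguments, but its surjectivity witness is exactly your $\Phi(s)=(D(s)',D(s)'s)$). The only cosmetic differences are your explicit two-sided inverse and your explicit appeal to Proposition \ref{0agrees} for the zero case, which the paper handles by a direct check of the $t=0$ instance of the equalizer condition.
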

\begin{proof} 
The arguments that $E$ is right pre-reduced, that $1\in E$ corresponds to $1\in D(S)$, and that $E$ is a meet-semilattice under $\omega^l$ are the same as that given in the proof of Theorem \ref{char}, with the case of $0$ easily accommodated.

We next show that $S'$ satisfies \ref{i1'} in Proposition \ref{Emodcor}.  If $t=0$, we must show that for any $g\in E$, $s0g=s0$ if and only if $s1=s$, which is trivially true.  If $t\in S'$ is non-zero, the argument is the same as that given in the proof of Theorem \ref{char} for $S_1$.  That $S'$ satisfies \ref{i2'} in Proposition \ref{Emodcor} follows the same argument as in the proof of Theorem \ref{char} for $S_1$. 

For $s,t\in S_1$, $D(st)=1$ since $S_1$ is a submonoid, so $st\neq 0$ and so $S'$ is integral.
	
Let $T=Rest_0(E,S')$, as in Definition \ref{restdef}. We claim that the map $\theta:T\rightarrow S$ given by $(e',s)\theta=es$ if $e'\neq 0$, with $(0,0)\theta=0$ if $S$ has zero, is an isomorphism.
	
First, suppose $(e',s),(f',t)\in T$.  If $M$ has zero and $(e',s)\neq (0,0)$, then $(e',s)\theta=es\neq 0$ since $S'$ is integral, so $(e',s)\theta\neq (0,0)\theta$.  If neither $(e',s)$ nor $(f',t)$ is $(0,0)$ and $(e',s)\theta=(f',t)\theta$, then the argument that $(e,s)=(f,t)$ is very similar to that for injectivity of $\theta$ given in the proof of Theorem \ref{char}.  Moreover neither $es$ nor $ft$ is zero.  So $\theta$ is injective.  
		
Now pick $s\in S$, $s\neq 0$.  Let $t=D(s)'s$, and then $(D(s)',t)\theta=D(s)t=s$ as in the proof of Theorem \ref{char}.  Furthermore, if $S$ has a zero then $(0,0)\theta=0$.  So $\theta$ is surjective, and is therefore a bijection.
	
Now suppose $(e',s),(f',t)\in T$.  If neither is $(0,0)$, then $((e',s)(f',t))\theta=(e',s)\theta(f',t)\theta$ as in the proof of Theorem \ref{char}.  If $S$ has zero, then because $(0,0)\theta=0$, it is immediate that $((0,0)(f,t))\theta=((e,s)(0,0))\theta=(0,0)\theta=0=(e,s)\theta(0,0)\theta=(0,0)\theta(f,t)\theta$.  So $\theta$ is a semigroup homomorphism.
	
Finally, we must show $D$ is respected by $\theta$.  The argument that $D((e',s))\theta=D((e',s)\theta)$ if $(e',s)\neq (0,0)$ is as in the proof of Theorem \ref{char}.  Otherwise, $D((0,0))\theta=(0,0)\theta=0=D(0)=D((0,0)\theta)$.
	
So $\theta$ is an isomorphism and $S\cong T$ as left restriction monoids.
\end{proof}

\begin{pro}  \label{max}
If $S$ is an inductive left $E$-monoid, then $E$ is maximal right pre-reduced if and only if $Rest(E,S)$ has precisely enough large idempotents (if and only if $Rest_0(E,S)$ has precisely enough large idempotents if $S$ is integral).  
\end{pro}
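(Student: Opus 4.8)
The plan is to leverage Proposition~\ref{key}, which already supplies \emph{enough} large idempotents for $M:=Rest(E,S)$ (and, when $S$ is integral, for $M:=Rest_0(E,S)$); what remains is exactly the extra clause in the definition of \emph{precisely} enough large idempotents, namely that every $f\in E(M_1)$ is $\sim_r$-related to some $e\in D(M)$. I would first make the relevant sets explicit. By Proposition~\ref{iscon} (applied to $C_E(S)=C^d_E(S)$, resp.\ to $C^0_E(S)$), $D(M)=\{(e,e)\mid e\in E\}$. For $E(M_1)$, the identification $M_1\cong S$ recorded just after Proposition~\ref{key} (resp.\ $M_1\cong S\setminus\{0\}$ in the integral case) shows that $(1,s)$ is idempotent in $M_1$ precisely when $s^2=s$, so $E(M_1)=\{(1,e)\mid e\in E(S)\}$ (resp.\ $\{(1,e)\mid e\in E(S),\ e\neq 0\}$). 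Also recall from Corollary~\ref{Emodcor} that $E$ is right pre-reduced, since $S$ is an inductive left $E$-monoid.

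The crux of the argument is a single computation: for $e\in E(S)$ and $f\in E$, I claim $(f,f)\sim_r(1,e)$ in $M$ if and only if $e\sim_r f$ in $E(S)$. Using the product formula of Corollary~\ref{modrestcor} together with the description of the operation $\cdot$ from \ref{i1'} (see the note after Corollary~\ref{Emodcor}): one has $t\cdot 1=1$, and for $t\in S$ and $e\in E$ the idempotent $t\cdot e\in E$ satisfies $S(t\cdot e)=Eq(te,t)$. One then computes $(f,f)(1,e)=(f,fe)$, so $(f,f)\leq_r(1,e)$ iff $fe=f$; and $(1,e)(f,f)=(e\cdot f,(e\cdot f)ef)$, so $(1,e)\leq_r(f,f)$ iff $e\cdot f=1$. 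The small but essential observation is that $e\cdot f=1$ iff $ef=e$: since $S(e\cdot f)=Eq(ef,e)=\{u\mid uef=ue\}$, the identity $1$ lies in $S(e\cdot f)$ — equivalently $e\cdot f=1$ — exactly when $ef=e$. Combining the two inequalities, $(f,f)\sim_r(1,e)$ iff ($fe=f$ and $ef=e$), which is precisely $e\sim_r f$ in $E(S)$.

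Granting the claim, the extra clause for $M$ becomes: every $e\in E(S)$ (nonzero, in the integral case) is $\sim_r$-equivalent in $E(S)$ to some $f\in E$; that is, $E$ meets every $\sim_r$-class of $E(S)$. Since $E$ is already right pre-reduced, this holds if and only if $E$ is a maximal right pre-reduced subset of $E(S)$, which gives both directions at once. For $Rest_0(E,S)$ nothing changes in the computation; the only point to note is that the $\sim_r$-class of $0$ in $E(S)$ is the singleton $\{0\}$ (as $S$ is integral), and this class is automatically met by $0\in E$, so quantifying only over nonzero $e$ costs nothing. I expect the main obstacle to be this middle step — carrying out the multiplication in $M$ and, in particular, justifying $e\cdot f=1\iff ef=e$ through the left-equalizer characterisation of $\cdot$ — rather than the bookkeeping around the zero.
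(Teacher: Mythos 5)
Your proposal is correct and follows essentially the same route as the paper: both proofs reduce the statement to the computation that $(1,s)\sim_r(e,e)$ in $Rest(E,S)$ exactly when $s\sim_r e$ in $E(S)$, using the product formula of Corollary~\ref{modrestcor} together with the observation that $s\cdot e=1$ if and only if $se=s$ (which is where the equalizer characterisation of $\cdot$ enters). Your packaging of this as a single biconditional claim, plus the explicit remark about the $\sim_r$-class of $0$ in the integral case, is a slightly tidier organisation of the same argument.
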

\begin{proof}
Suppose $S$ is an inductive left $E$-monoid.   

First assume that $E$ is maximal right pre-reduced.  For $(1,s)\in E(Rest(E,S)_1)$ ($=E(Rest_0(E,S)_1)$ if $S$ is integral), then $(1,s)(1,s)=(1\wedge (s\cdot 1),(1\wedge (s\cdot 1))s^2)$, so because $s\cdot 1=1$, we have that $s^2=s$ in $S$, and so $s\in E(S)$.  Because $E$ is maximal right pre-reduced in $E(S)$, there is $f\in E$ such that $f\, {\mc L}\, s$, so $fs=f$ and $sf=s$.  Hence, $(f,f)(1,s)=(f,fs)=(f,f)$ and $(1,s)(f,f)=(s\cdot f,(s\cdot f)sf)=(1,sf)=(1,s)$, so $Rest(E,S)$ (and $Rest_0(E,S)$ if $S$ has zero) has precisely enough large idempotents.  

Conversely, if $Rest(E,S)$ (or $Rest_0(E,S)$ if $S$ is integral) has precisely enough large idempotents, then for $(1,s)\in E(Rest(E,S)_1) (=E(Rest_0(E,S)_1)$ if $S$ has zero), $s\in E(S)$ and there exists (non-zero) $(e,e)\in D(Rest(E,S))$ such that $(e,e)(1,s)=(e,e)$ and $(1,s)(e,e)=(1,s)$, so $(e,es)=(e,e)$ and $(s\cdot e,(s\cdot e)se)=(1,s)$, so $es=e$, and $s\cdot e=1$ so $se=s$, and so $s\, {\mc L}\, e$, showing that $E$ is maximal right pre-reduced.
\end{proof}

\begin{cor}  \label{char02}
If a left restriction monoid $S$ has precisely enough large idempotents, then it is isomorphic to $Rest_0(E,S')$ if $S$ has zero or to $Rest(E,S')$ if not, where $E$ and $S'$ are as in Theorem \ref{char0} and $E$ is maximal right pre-reduced in $E(S)$.
\end{cor}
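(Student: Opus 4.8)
The plan is to combine Theorem~\ref{char0} with Proposition~\ref{max}, together with a short extra argument to handle those idempotents of $S$ that do not lie in $S_1$ (or $S_1\cup\{0\}$).

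First, since ``precisely enough large idempotents'' implies ``enough large idempotents'', Theorem~\ref{char0} applies verbatim: $S'$ is an inductive left $E$-monoid, integral if $S$ has a zero, and $S\cong T$, where $T=Rest_0(E,S')$ if $S$ has a zero and $T=Rest(E,S')$ otherwise. Transporting the hypothesis along this isomorphism, $T$ has precisely enough large idempotents, so Proposition~\ref{max} (applied to the inductive left $E$-monoid $S'$) gives that $E$ is maximal right pre-reduced \emph{in $E(S')$}. Because $S'$ is $S_1$ or $S_1\cup\{0\}$ and the two multiplications agree on $S'$, the relation $\sim_r$ on $E(S')$ is just the restriction of $\sim_r$ on $E(S)$; hence $E$ is still right pre-reduced in $E(S)$, and $E$ meets the $\sim_r$-class (in $E(S)$) of every idempotent lying in $S'$. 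In particular $E$ meets every $\sim_r$-class of $E(S_1)$.

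The remaining, and really the only substantive, point --- the step I expect to require the most care --- is to show $E$ meets the $\sim_r$-class of \emph{every} idempotent of $S$, including those not in $S'$. Given $g\in E(S)$: if $S$ has a zero and $D(g)=0$ then $g=D(g)g=0\in E$; otherwise put $e'=D(g)'\in E$, so that $D(g)\sim_r e'$, that is, $D(g)e'=D(g)$ and $e'D(g)=e'$. From $gD(g)=g$ --- which holds since $gD(g)=D(g^2)g=D(g)g=g$ by \textup{(R1)} and \textup{(R4)} --- one gets $ge'=gD(g)e'=gD(g)=g$, and then $e'g$ is idempotent (as $(e'g)^2=e'(ge')g=e'g$) and lies in $S_1$ (as $D(e'g)=D(e'D(g))=D(e')=1$, using \textup{(R5)} and $e'D(g)=e'$). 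Since $e'g\in E(S_1)$, the previous paragraph supplies $\hat h\in E$ with $e'g\sim_r\hat h$; a direct check using $g(e'g)=(ge')g=g$, $e'g=(e'g)\hat h$ and $\hat h=\hat h(e'g)$ then gives $g\hat h=g$ and $\hat hg=\hat h$, so $g\sim_r\hat h\in E$. Thus $E$ is maximal right pre-reduced in $E(S)$, which together with the isomorphism $S\cong T$ completes the proof. The key subtlety to get right is that $\hat h$ must be chosen as the $E$-representative of the $\sim_r$-class of $e'g$ (an idempotent of $S_1$), not of $g$ or of $D(g)$ directly.
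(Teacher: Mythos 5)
Your proposal is correct and follows the route the paper intends: the isomorphism comes from Theorem~\ref{char0}, and maximality of $E$ is extracted from Proposition~\ref{max} after transporting the hypothesis of precisely enough large idempotents across that isomorphism. Your final paragraph goes beyond what the paper makes explicit: Proposition~\ref{max} only yields maximality within $E(S')$, and your computation with $e'=D(g)'$ and the idempotent $e'g\in E(S_1)$ (checking $ge'=g$, $D(e'g)=1$, and then $g\sim_r\hat h$ via $g=g(e'g)$) is a correct and genuinely needed verification that every idempotent of the full monoid $S$, not just those in $S_1\cup\{0\}$, meets $E$ up to $\sim_r$ --- a detail the paper leaves implicit in asserting maximality in $E(S)$.
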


The results of this section show in particular that a left restriction semigroup $S$ with enough large idempotents (with or without zero) may be reconstructed from its submonoid $S_1$ (consisting of precisely those elements of $S$ that are $\tilde{\mc R}_{D(S)}$-related to $1$).  This may be done by selecting exactly one idempotent in $S_1$ (together with $0$ if $S$ has zero) from the ${\mc L}$-class of each member of $D(S)$ to form $E$, and then forming $Rest(E,S_1)$ (or $Rest_0(E,S')$ where $S'=S_1\cup\{0\}$ if $S$ has zero) as in the proof of Theorem \ref{char} (Theorem \ref{char0} if $S$ has zero).   If $S$ has precisely enough large idempotents, this can be viewed as selecting exactly one idempotent from each ${\mc L}$-class of $S_1$ that contains an idempotent (together with zero if $S$ has zero) and then forming $Rest(E,S_1)$ (or $Rest(E,S')$ if $S$ has zero).  In Section \ref{egs}, we shall see that this reconstruction technique applies to some of the most important examples of left restriction semigroups.

\section{Inductive left $E$-monoids in more detail}  \label{leftEmod}

Their signficance established, in this section we consider inductive left $E$-monoids for their own sake.  We show how in every monoid there is a single (modulo ${\mc L}$) largest choice of $E$ with respect to which it is an inductive left $E$-monoid.  But first, it proves useful to consider actions satisfying only the first condition of the inductive property for left $E$-monoids.  

\begin{dfn}  \label{modal}
Suppose $S$ is a left $E$-monoid.  Then $S$ is {\em modal} if :
\ben
\item $E$ is right pre-reduced, that is, for all $e,f\in E$, $e=ef$ and $f=fe$ imply $e=f$, and 
\item \ref{i1'} in Corollary \ref{Emodcor} holds, that is, for all $t\in S$, $e\in E$, $(te,t)$ have a (necessarily unique) left equalizer in $E$, denoted by $t\cdot e$.  
\een
When extended to all of $S\times E$, we call the induced mapping $(t,e)\mapsto t\cdot e$ the {\em left $E$-modal operation} on $S$.  
\end{dfn}

Note that the definition could instead have specified that for all $t\in S$, $e\in E$, $(te,t)$ have a unique left equalizer in $E$, and then the right pre-reduced property would have followed: for if $e\, {\mc L}\, f$ for some $e,f\in E$, then $Eq(1,1e)=Eq(1,e)=Se$, so $e$ equalizes $(1,1e)$, and since $e\, {\mc L}\, f$, so does $f$ by Proposition \ref{eq}, and so $e=f$ by uniqueness.

In general, simply defining $s\cdot e=D(se)$ for all $s\in S$ and $e\in D(S)$ in a left restriction monoid $S$ (which induces a familiar action of $S$ on $D(S)$) does not make it into a modal left $D(S)$-monoid: additional structure is needed. Nor does restricting to $S_1$ and making the same definition turn $S_1$ into a modal left $D(S)$-monoid, since, aside from $1\in S$, $D(S)$ lies outside $S_1$.   

The terminology ``modal" used in Definition \ref{modal} is in fact motivated from \cite{modrest}, since the so-called modal restriction semigroups considered there are special cases of modal left $E$-monoids.  A modal restriction semigroup is a left restriction monoid with zero, $S$, such that for every $s\in S$ there is a largest $e\in D(S)$ for which $es=0$ (call this $A(s)$), also satisfying certain other laws so that $A$ models domain complement for partial functions. It then follows that for all $s\in S$ and $e\in D(S)$, $A(sA(e))$ is the largest $f\in D(S)$ for which $fse=fs$, and so $S$ is a modal left $D(S)$-monoid.

\begin{pro}  \label{eqmod0cor}
If $S$ is a modal left $E$-monoid with $s\in S$ and $e\in E$, then for all $f\in E$, $f\in Eq(se,s)$ if and only if $f\ \omega^l\ s\cdot e$.
\end{pro}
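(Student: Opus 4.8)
The plan is to unwind the definitions directly. By the modal hypothesis, $s \cdot e$ is by construction a left equalizer of the pair $(se, s)$ in $E$ --- this is precisely what the left $E$-modal operation $t \mapsto t \cdot e$ means (condition \ref{i1'}) --- so by the definition of left equalizer we have $Eq(se, s) = S(s \cdot e)$, the principal left ideal of $S$ generated by the idempotent $s \cdot e$. Thus the assertion ``$f \in Eq(se, s)$'' is literally the assertion ``$f \in S(s \cdot e)$'', and the whole statement reduces to recognising membership in a principal left ideal generated by an idempotent.

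For the remaining step I would use the elementary fact that for any $f \in S$ and any idempotent $g \in E(S)$, one has $f \in Sg$ if and only if $fg = f$: if $f = ag$ for some $a \in S$ then $fg = (ag)g = ag = f$ since $g^2 = g$, and conversely $fg = f$ gives $f = fg \in Sg$ at once. Applying this with $g = s \cdot e$ yields $f \in S(s \cdot e)$ if and only if $f(s \cdot e) = f$; and since both $f$ and $s \cdot e$ lie in $E(S)$, the equation $f(s \cdot e) = f$ is by definition precisely $f \leq_r s \cdot e$. Chaining the equivalences $f \in Eq(se, s) \iff f \in S(s \cdot e) \iff f(s \cdot e) = f \iff f \leq_r s \cdot e$ then gives the claim in both directions simultaneously.

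I do not expect a genuine obstacle: the proof is a short unwinding. The only points to be careful about are that in a modal left $E$-monoid the modal operation genuinely returns a \emph{left equalizer} (so that $Eq(se, s)$ really is the principal left ideal $S(s \cdot e)$, which is in particular nonempty, containing $s \cdot e$), and that the characterisation of membership in $Sg$ via the identity $fg = f$ needs only the idempotency of $g$ and no property of $E$. With these observations in hand the argument above is complete.
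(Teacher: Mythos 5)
Your proof is correct and follows essentially the same route as the paper, which simply chains the equivalences $f\leq_r s\cdot e$; $f\in S(s\cdot e)$; $f\in Eq(s,se)$. Your write-up just makes explicit the two ingredients (that $Eq(se,s)=S(s\cdot e)$ by modality, and that membership in $Sg$ for idempotent $g$ is equivalent to $fg=f$) that the paper leaves implicit.
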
 
\begin{proof}
	The following are equivalent: $f\ \omega^l\ s\cdot e$; $f\in S(s\cdot e)$; $f\in Eq(s,se)$.
\end{proof}

It follows that in a modal left $E$-monoid $S$, $s\cdot e$ is the largest $f\in E$ (with respect to $\omega^l$) such that $fs=fse$.  

\begin{eg} A small non-inductive yet modal left $E$-monoid. \label{smallnoand} \end{eg}
\noindent Let $S=\{0,e,f,1\}$ be the band in which $0$ is a zero, $1$ is an identity, $ef=fe=0$, and $E=\{e,f,1\}$.  Then $S$ is easily verified to be a modal left $E$-monoid, and $e,f\ \omega^l\ 1$ but $e,f$ have no lower bound in $E$ under $\omega^l$. 
\medskip 

We next give a description of left $E$-modal operations which makes clear that they are indeed ``actions" in the usual sense. 

\begin{pro}  \label{modalaws}
	Suppose $S$ is a left $E$-monoid, and there is a function $\cdot: S\times E\rightarrow E$.  Then $S$ is modal with $\cdot$ the left $E$-modal operation if and only if $E$ is right pre-reduced and the following laws hold.  For all $e,f\in E$ and $s,t\in S$:
	\ben[label=\textup{(M\arabic*)}]
	\item $e\cdot e=1$, \label{m1}
	\item $s\cdot 1=1$, \label{m2}
	\item $(s\cdot e)s=(s\cdot e)se$, \label{m3}
	\item $(st)\cdot e=s\cdot(t\cdot e)$. \label{m4}
	\een
	Moreover, $S$ is an inductive left $E$-monoid if and only if the above laws hold and $E$ is a meet-semilattice with respect to $\omega^l$ such that the following law holds:
	\ben
	\item[\textup{(M5)}] $s\cdot (e\wedge f)=(s\cdot e)\wedge (s\cdot f)$, where $\wedge$ is meet in $(E,\omega^l)$. \label{m5}
	\een
\end{pro}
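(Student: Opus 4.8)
The plan is to verify the two equivalences by translating the abstract conditions \ref{i1'} and \ref{i2'} of Corollary \ref{Emodcor} into the algebraic laws (M1)--(M5), using the characterisation of $t\cdot e$ as the unique left equalizer of $(te,t)$ in $E$ (equivalently, via Proposition \ref{eqmod0cor}, the largest $f\in E$ under $\leq_r$ with $fs=fse$). For the first equivalence, I would argue each direction separately. Assuming $S$ is modal with modal operation $\cdot$: (M1) follows since $e\cdot e$ is the left equalizer of $(ee,e)=(e,e)$, and $1$ equalizes any pair, so by uniqueness $e\cdot e=1$; (M2) is similar since $s\cdot 1$ equalizes $(s1,s)=(s,s)$; (M3) is just the statement that $(s\cdot e)s=(s\cdot e)se$, i.e.\ that $s\cdot e$ lies in $Eq(se,s)$, which holds by definition of left equalizer; and (M4) follows by showing both $(st)\cdot e$ and $s\cdot(t\cdot e)$ are left equalizers of the same pair — one checks that $u(st)=u(st)e$ iff $(ut)(1)=\ldots$ — more carefully, $u\in Eq(ste,st)$ iff $ut\in Eq(te,t)=S(t\cdot e)$ iff $u\in Eq(s(t\cdot e),s)$ (using that $S(t\cdot e)$ is a left ideal and the defining property of $t\cdot e$), hence $Eq(ste,st)=Eq(s(t\cdot e),s)$, so by uniqueness of equalizers in $E$ the two agree. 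Conversely, if $E$ is right pre-reduced and (M1)--(M4) hold, I would show $t\cdot e$ is the left equalizer of $(te,t)$: (M3) gives $t\cdot e\in Eq(te,t)$; for the ideal equality $Eq(te,t)=S(t\cdot e)$, one inclusion uses that $S(t\cdot e)\subseteq Eq(te,t)$ since $s(t\cdot e)t=s(t\cdot e)te$ follows from (M3) and (M4) (compute $s(t\cdot e)\cdot$ something), and the reverse inclusion $u\in Eq(te,t)\Rightarrow u\in S(t\cdot e)$ uses $u=u\cdot 1$-type manipulation together with (M2) and (M4) to show $u(t\cdot e)=u$. Uniqueness of the equalizer in $E$ is automatic from right pre-reducedness via Proposition \ref{eq}.

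For the second equivalence, given the first, it remains to show that — under (M1)--(M4) and assuming $(E,\leq_r)$ is a meet-semilattice — condition \ref{i2'} (namely $se=sf=s\Rightarrow s(e\wedge f)=s$ for all $s\in S$, $e,f\in E$) is equivalent to (M5). For the forward direction, assume \ref{i2'}. Using the modal characterisation, $s\cdot(e\wedge f)$ is the largest $g\in E$ with $gs=gs(e\wedge f)$, while $(s\cdot e)\wedge(s\cdot f)$ is the meet of the largest such for $e$ and for $f$ separately. I would show $(s\cdot e)\wedge(s\cdot f)\leq_r s\cdot(e\wedge f)$ by noting $g:=(s\cdot e)\wedge(s\cdot f)$ satisfies $gs=gse$ and $gs=gsf$ (since $g\leq_r s\cdot e$ and $g\leq_r s\cdot f$, via Proposition \ref{eqmod0cor}), hence applying \ref{i2'} to the element $gs$ gives $gs=gs(e\wedge f)$, i.e.\ $g\leq_r s\cdot(e\wedge f)$; the reverse inequality $s\cdot(e\wedge f)\leq_r s\cdot e$ and $\leq_r s\cdot f$ follows since $e\wedge f\leq_r e$ forces any equalizer of $(s(e\wedge f),s)$-type condition to be compatible — concretely $s\cdot(e\wedge f)$ satisfies $hs=hs(e\wedge f)=hs(e\wedge f)e$, and $(e\wedge f)e=e\wedge f$ so $hs=hse$, giving $s\cdot(e\wedge f)\leq_r s\cdot e$, similarly for $f$, hence $s\cdot(e\wedge f)\leq_r(s\cdot e)\wedge(s\cdot f)$; right pre-reducedness (antisymmetry of $\leq_r$) then yields equality (M5). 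For the converse, assume (M5) and suppose $se=sf=s$; then $s\cdot e=1$ and $s\cdot f=1$ (since $1$ equalizes $(se,s)=(s,s)$, etc., forcing $s\cdot e=1$ by uniqueness — or: $1\leq_r s\cdot e$ always, and $se=s$ gives $s\cdot e\leq_r 1$... actually $1$ is the top, so $s\cdot e=1$), so by (M5), $s\cdot(e\wedge f)=1\wedge 1=1$, whence $s=s\cdot$-identity gives $s(e\wedge f)=s$, establishing \ref{i2'}.

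The main obstacle I anticipate is the careful bookkeeping in the (M4) argument and in the ideal equality $Eq(te,t)=S(t\cdot e)$: these require chaining Lemma \ref{bighelp}, the fact that left equalizing sets are left ideals, and the defining universal property of $t\cdot e$, and one must be scrupulous about the direction of $\leq_r$ and which of $esf=es$ versus $es=et$ is being invoked. Everything else — (M1), (M2), (M3), and the equivalence of \ref{i2'} with (M5) — should reduce to short routine manipulations once the translation dictionary between "left equalizer in $E$" and the laws is set up. I would also remark at the start that uniqueness of left equalizers in $E$ (hence well-definedness of $\cdot$) is exactly the right pre-reduced hypothesis, via Proposition \ref{eq}, so that hypothesis is doing real work and cannot be dropped (cf.\ Example \ref{smallnoand} for the failure of the semilattice condition).
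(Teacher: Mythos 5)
Your proposal is correct and follows essentially the same route as the paper: (M1)--(M3) are read off from the definition of the left equalizer, (M4) via the identity $Eq(ste,st)=Eq(s(t\cdot e),s)$ (which the paper establishes as two $\leq_r$ inequalities plus right pre-reducedness — the same argument), the converse of the first equivalence via the computation $s\cdot(t\cdot e)=(ste)\cdot e=(st)\cdot(e\cdot e)=(st)\cdot 1=1$ followed by (M3), and the (M5) equivalence exactly as you describe. Only two small slips to fix: in your (M4) chain the condition $u\in Eq(ste,st)$ translates to $us\in Eq(te,t)$, not $ut$; and the inclusion $Eq(te,t)\subseteq S(t\cdot e)$ needs (M1) alongside (M2) and (M4) to reach $(st)\cdot(e\cdot e)=(st)\cdot 1=1$ before (M3) finishes.
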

\begin{proof}
Suppose $S$ is modal.  Then $E$ is right pre-reduced, and the first three laws are immediate. For the fourth, letting $s,t\in S$ and $e\in E$, and letting $f=(st)\cdot e$ and $g=s\cdot(t\cdot e)$, we see that $fst=fste$, so $fs(t\cdot e)=fs$, and so $f\omega^l s\cdot(t\cdot e)=g$ by Proposition \ref{eqmod0cor}.  Conversely, $gs=gs(t\cdot e)$, so $gst=gs(t\cdot e)t=gs(t\cdot e)te=gste$, and so $g\ \omega^l\ (st)\cdot e=f$ using the same result.  Because $E$ is right pre-reduced, $f=g$.  So \ref{m4} is now established. 
	
Conversely, suppose \ref{m1}--\ref{m4} all hold and $E$ is right pre-reduced.  Suppose $ste=st$ for some $s,t\in S$ and $e\in E$.  Then using \ref{m1}, \ref{m2} and \ref{m4},
$$s\cdot(t\cdot e)=(st)\cdot e=(ste)\cdot e=(st)\cdot(e\cdot e)=(st)\cdot 1=1,$$ 
so by \ref{m3},
$$s(t\cdot e)=1s(t\cdot e)=(s\cdot(t\cdot e))s(t\cdot e)=(s\cdot(t\cdot e))s=s.$$
Conversely, if $s(t\cdot e)=s$ then by the third law, $ste=s(t\cdot e)te=s(t\cdot e)t=st$.  Hence $Eq(te,t)=S(t\cdot e)$ and so $S$ is modal.
	
Now suppose $S$ is an inductive left $E$-monoid.  Note that if $e,f\in E$ with $e\ \omega^l\ f$, then $e=ef$, so $(s\cdot e)sf=(s\cdot e)sef=(s\cdot e)se=(s\cdot e)s$, so $s\cdot e\ \omega^l\ s\cdot f$ by Proposition \ref{eqmod0cor}.  Hence because $e\wedge f\ \omega^l\ e,f$, $s\cdot (e\wedge f)\ \omega^l\ (s\cdot e)\wedge (s\cdot f)$.  Conversely, suppose $g\ \omega^l\  s\cdot e,g\ \omega^l\  s\cdot f$.  Then $gse=g(s\cdot e)se=g(s\cdot e)s=gs$ and similarly $gsf=gs$, so $gs(e\wedge f)=gs$ and so $g\ \omega^l\  s\cdot(e\wedge f)$.  Hence, (M5) holds.
	
Conversely, suppose $E$ is a meet semilattice with respect to $\omega^l$, and (M5) holds.  Then $\omega^l$ is a partial order and so $E$ is right pre-reduced.   Moreover, if $sf=s=sg$ for some $s\in S$ and $f,g\in E$, then $s\cdot f=(sf)\cdot f=s\cdot(f\cdot f)=s\cdot 1=1$ and similarly $s\cdot g=1$, so $s\cdot(e\wedge f)=(s\cdot e)\wedge (s\cdot f)=1\cdot 1=1$, and so $s(e\wedge f)=(s\cdot(e\wedge f))s(e\wedge f)=(s\cdot(e\wedge f))s=s$.  So $S$ is inductive.
\end{proof}

If $S$ is a modal left $E$-monoid, it follows easily that $1\cdot e=e$ for all $e\in E$, so all the conditions in the definition of an action of a monoid $S$ on a semilattice $E$ are satisfied when $S$ is an inductive left $E$-monoid.  

Next we show that a left $E$-modal monoid is also left $E'$-modal providing $E'\sim_l E$, and moreover if the former is inductive then so is the latter. 

\begin{pro}  \label{lmequiv}
Let $S$ be a left $E$-monoid, with $E'\sim_l E$, and suppose that for each $e\in E$, $e'\in E'$ is such that $e\, {\mc L}\, e'$.  Then $S$ is modal as a left $E'$-monoid, with left $E'$-modal operation $s\cdot e'=(s\cdot e)'$, for all $e\in E$.  If $S$ is inductive as a left $E$-monoid, then it is inductive as a left $E'$-monoid, and $(e\wedge f)'=e'\wedge f'$ for all $e,f\in E$.
\end{pro}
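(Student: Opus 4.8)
The plan is to transport everything along the bijection $e\mapsto e'$ furnished by $E\sim_r E'$, using only the invariance properties of equalizers and of $\leq_r$ under replacing an idempotent by a $\sim_r$-equivalent one. Throughout, $s\cdot e$ denotes the left $E$-modal operation (so the statement presupposes $S$ is modal as a left $E$-monoid).

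First I would check that $E'$ is a legitimate choice of distinguished idempotents for $S$, i.e.\ that $S$ is a left $E'$-monoid. Since $\sim_r$ is transitive, if $e',f'\in E'$ satisfy $e'\sim_r f'$ then $e\sim_r f$, whence $e=f$ because $E$ is right pre-reduced (modality of the left $E$-monoid), and so $e'=f'$; thus $E'$ is right pre-reduced. Also $1\in E$ forces the element $1'\in E'$ with $1\sim_r 1'$ to equal $1$ (as $e\sim_r 1$ implies $e=1$), so $1\in E'$, and the remaining left $E'$-monoid requirements are immediate.

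For modality of the left $E'$-monoid with the claimed operation, I would fix $s\in S$ and $e'\in E'$, let $e\in E$ be the unique element with $e\sim_r e'$, and compute the equalizing set $Eq(se',s)$. Since $e\sim_r e'$, the final statement of Proposition \ref{eq} gives $Eq(s,se')=Eq(s,se)$; modality of the left $E$-monoid gives $Eq(s,se)=Eq(se,s)=S(s\cdot e)$; and since $(s\cdot e)\sim_r(s\cdot e)'$, Lemma \ref{lisame} gives $S(s\cdot e)=S(s\cdot e)'$. Hence $Eq(se',s)=S(s\cdot e)'$ with $(s\cdot e)'\in E'$, so $(s\cdot e)'$ is a left equalizer of $(se',s)$ lying in $E'$. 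Uniqueness of such a representative in the right pre-reduced set $E'$ (first part of Proposition \ref{eq}) shows $(s\cdot e)'$ is the left $E'$-modal value of $s$ at $e'$; since $e\mapsto e'$ is onto, this defines $\cdot$ on all of $S\times E'$, so $S$ is modal as a left $E'$-monoid with $s\cdot e'=(s\cdot e)'$.

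Finally, for the inductive case: the bijection $e\mapsto e'$ is a poset isomorphism $(E,\leq_r)\cong(E',\leq_r)$, as noted after the definition of right-equivalence, so if $(E,\leq_r)$ is a meet-semilattice then so is $(E',\leq_r)$ and meets are preserved, giving $(e\wedge f)'=e'\wedge f'$. It then remains to verify condition \ref{i2'} for $E'$: given $s\in S$ and $e',f'\in E'$ with $se'=s=sf'$, part (1) of Lemma \ref{bighelp} yields $se=s=sf$, so $s(e\wedge f)=s$ by \ref{i2'} for $E$, and one more application of part (1) of Lemma \ref{bighelp} (using $(e\wedge f)\sim_r(e\wedge f)'$) gives $s(e'\wedge f')=s(e\wedge f)'=s$; thus $S$ is inductive as a left $E'$-monoid. (Alternatively this last step follows from Corollary \ref{isomrest} and Proposition \ref{isomconst}, but the direct check is shorter.) I do not expect a serious obstacle here; the only point requiring care is bookkeeping which representative ($e$ or $e'$) is fed to $Eq$, to $S(-)$, and to $\cdot$ at each step, and invoking the matching invariance fact: Proposition \ref{eq} for perturbing the second argument of $Eq$, Lemma \ref{lisame} for $Se=Se'$, and part (1) of Lemma \ref{bighelp} for absorbing an idempotent up to $\sim_r$.
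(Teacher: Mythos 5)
Your proof is correct, and the first half (modality of the left $E'$-monoid) is essentially the paper's argument in different clothing: the paper runs the chain of elementwise equivalences $tse'=ts \Leftrightarrow tse=ts \Leftrightarrow t(s\cdot e)=t \Leftrightarrow t(s\cdot e)'=t$ via Lemma \ref{bighelp}, whereas you package the same content as an equality of equalizing sets, $Eq(se',s)=Eq(se,s)=S(s\cdot e)=S(s\cdot e)'$, via Proposition \ref{eq} and Lemma \ref{lisame}. (Your explicit checks that $E'$ is right pre-reduced and that $1\in E'$ are details the paper leaves implicit, and they are worth having.) Where you genuinely diverge is the inductive part: the paper lifts the question to constellations, invoking Proposition \ref{isind} to get that $C_E(S)$ is inductive, Proposition \ref{isomconst} for $C_E(S)\cong C_{E'}(S)$, and Proposition \ref{converse} to descend back to the $E'$-monoid, whereas you verify condition \ref{i2'} for $E'$ directly by transporting the meet-semilattice structure along the poset isomorphism $(E,\leq_r)\cong(E',\leq_r)$ and absorbing idempotents up to $\sim_r$ with Lemma \ref{bighelp}(1). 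Your route is more elementary and self-contained; the paper's buys the result as an instance of the already-established invariance of the whole construction under right-equivalence, which it then reuses elsewhere (e.g.\ Corollary \ref{isomrest}). Both are valid, and you correctly flag the alternative.
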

\begin{proof}
First, $E'$ is right pre-reduced.  Further, for $s,t\in S$ and $e'\in E'$, using the fact that $S$ is left $E$-modal as well as Lemma \ref{bighelp}, the following are seen to be equivalent: $tse'=ts$; $tse=ts$; $t(s\cdot e)=t$; $t(s\cdot e)'=t$; $t(s\cdot e')=t$.  So the left $E'$-monoid $S$ is also modal.

If $S$ is an inductive left $E$-monoid then $C_E(S)$ is inductive by Proposition \ref{isind}.  But by Proposition \ref{isomconst}, $C_E(S)\cong C_{E'}(S)$, so the latter is inductive, and so by Proposition \ref{converse}, $S$ is an inductive left $E'$-monoid.  Since $(E,\omega^l)$ and $(E',\omega^l)$ are isomorphic as posets, it follows that $(e\wedge f)'=e'\wedge f'$ for all $e,f\in E$. 
\end{proof}

Even in a modal left $E$-monoid, where meets of elements of $E$ need not exist, we obtain the following.

\begin{lem}  \label{ef}
	Let $S$ be a modal left $E$-monoid, with $e\in E(S)$ and $f\in E$.  Then $(e\cdot f)e$ is a greatest lower bound of $e,f$ in $E(S)$ with respect to $\omega^l$, and if $se=sf=s$ for some $s\in S$, then $s(e\cdot f)e=s$.
\end{lem}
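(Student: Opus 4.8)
The plan is to reduce everything to a single reformulation of modality and then read off the three assertions as essentially one-line consequences. Write $g=e\cdot f\in E$ and $h=(e\cdot f)e=ge$. By definition $g$ is the left equalizer of the pair $(ef,e)$, so $Eq(ef,e)=Sg$; and since $g$ is idempotent, $Sg=\{u\in S\mid ug=u\}$. Hence for every $u\in S$ we have the equivalence
\[
uef=ue\quad\Longleftrightarrow\quad ug=u;
\]
call this $(\star)$. Taking $u=g$ in $(\star)$ (and using $g^2=g$) gives immediately $gef=ge$.

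The first thing I would check is that $h=ge$ actually lies in $E(S)$, since otherwise the phrase ``$h$ is a lower bound of $e,f$ under $\leq_r$'' is not even meaningful. This is the one step that is not a pure rewrite: apply $(\star)$ with $u=ge$. Its left-hand side reads $(ge)ef=geef=gef=ge=(ge)e$, which holds by the previous paragraph, so the right-hand side gives $geg=ge$. Consequently $h^2=(ge)(ge)=(geg)e=(ge)e=ge=h$, so $h\in E(S)$. That $h$ is a lower bound of $e$ and $f$ is then immediate: $he=gee=ge=h$ and $hf=gef=ge=h$, so $h\leq_r e$ and $h\leq_r f$.

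To see that $h$ is a \emph{greatest} lower bound, I would take any $k\in E(S)$ with $k\leq_r e$ and $k\leq_r f$, i.e.\ $ke=k=kf$. Then $kef=(ke)f=kf=k=ke$, so $(\star)$ with $u=k$ yields $kg=k$, whence $kh=k(ge)=(kg)e=ke=k$; that is, $k\leq_r h$. Finally, for the last assertion, if $se=sf=s$ for some $s\in S$, the same computation with $u=s$ gives $sef=(se)f=sf=s=se$, so $(\star)$ yields $sg=s$, and therefore $s(e\cdot f)e=s(ge)=(sg)e=se=s$, as required.

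The only genuinely substantive point, as indicated, is the observation that $ge$ itself belongs to $Eq(ef,e)=Sg$, which forces $geg=ge$ and hence idempotency of $h$; everything else — the two instances of the lower-bound inequality, the comparison with an arbitrary lower bound $k$, and the final absorption property — is a direct application of $(\star)$ together with associativity and $e^2=e$. I do not expect any difficulty beyond that single idempotency check.
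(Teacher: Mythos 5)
Your proof is correct and follows essentially the same route as the paper's: establish idempotency of $(e\cdot f)e$, read off the two lower-bound relations from $(e\cdot f)ef=(e\cdot f)e$, and obtain both the ``greatest'' claim and the final absorption property from the fact that $sef=se$ forces $s(e\cdot f)=s$. The only cosmetic difference is that you derive idempotency directly from the equalizer characterisation $Eq(ef,e)=S(e\cdot f)$ (your $(\star)$, applied with $u=(e\cdot f)e$), whereas the paper gets it from the modal laws (M3)--(M4) of Proposition \ref{modalaws}; these are equivalent formulations of modality, so the arguments coincide in substance.
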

\begin{proof}
	Choose $e\in E(S)$, $f\in E$.  Then using Proposition \ref{modalaws}, $e\cdot f=(ee)\cdot f=e\cdot(e\cdot f)$, so 
	$$(e\cdot f)e=(e\cdot(e\cdot f))ee=(e\cdot(e\cdot f))e(e\cdot f)e=(e\cdot f)e(e\cdot f)e,$$
	and so $(e\cdot f)e\in E(S)$. 
	If $s\in S$ is such that $se=sf=s$, then $sef=se$, so $s(e\cdot f)=s$, and so $s(e\cdot f)e=se=s$.
	
	Finally, $(e\cdot f)e=(e\cdot f)ef$ and so $(e\cdot f)e\omega^l e,f$ in $E(S)$. If $g\omega^l e,f$ for some $g\in E(S)$, then $ge=g=gf$, so letting $s=g$ in what was just shown gives $g(e\cdot f)e=g$, and so $g\omega^l (e\cdot f)e$.
\end{proof}

This last result ensures that the meet of elements of $E$ in an inductive left $E$-monoid admits a description ``modulo ${\mc L}$".  But more can be said.  

\begin{pro}  \label{conjef}
	Let $S$ be a modal left $E$-monoid.  Then it is inductive if and only if for all $e,f\in E$, $(e\cdot f)e\, {\mc L}\, g$ for some $g\in E$, in which case $e\wedge f=g$. In this case, $e\wedge f=e\wedge (e\cdot f)$.  
	If $S$ is inductive and $E$ is right reduced, then $e\wedge f=(e\cdot f)e$.
\end{pro}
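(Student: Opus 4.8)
*Let $S$ be a modal left $E$-monoid. Then it is inductive if and only if for all $e,f\in E$, $(e\cdot f)e\sim_r g$ for some $g\in E$, in which case $e\wedge f=g$. In this case, $e\wedge f=e\wedge (e\cdot f)$. If $S$ is inductive and $E$ is right reduced, then $e\wedge f=(e\cdot f)e$.*

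\textbf{Proof proposal.} The plan is to reduce everything to Lemma \ref{ef} (which tells us $(e\cdot f)e$ is a greatest lower bound of $e,f$ in $E(S)$ under $\leq_r$, and that $s(e\cdot f)e=s$ whenever $se=sf=s$) together with the inductivity criterion of Corollary \ref{Emodcor}. Since a modal left $E$-monoid already has $E$ right pre-reduced and satisfies \ref{i1'}, the only thing at stake is condition \ref{i2'}, i.e.\ that $(E,\leq_r)$ is a meet-semilattice and $se=sf=s$ forces $s(e\wedge f)=s$.

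First I would prove the main equivalence together with the formula $e\wedge f = g$. If $S$ is inductive, then $(E,\leq_r)$ is a meet-semilattice; put $g=e\wedge f$. By Lemma \ref{ef}, $h:=(e\cdot f)e$ is a greatest lower bound of $e,f$ in $E(S)$, so since $g\leq_r e$ and $g\leq_r f$ in $E\subseteq E(S)$ we get $g\leq_r h$; conversely $h\leq_r e,f$ gives $he=hf=h$, and \ref{i2'} yields $hg=h$, that is $h\leq_r g$. Hence $h\sim_r g$, and right pre-reducedness of $E$ makes $g$ the unique such element, namely $e\wedge f$. For the converse direction, assume for each pair $e,f$ there is $g\in E$ with $(e\cdot f)e\sim_r g$. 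Transferring $\leq_r$-comparisons through $h\sim_r g$ (using Lemma \ref{bighelp}(1)) shows $g$ is the greatest lower bound of $e,f$ in $(E,\leq_r)$, so $(E,\leq_r)$ is a meet-semilattice with $e\wedge f=g$; and if $se=sf=s$ then $sh=s$ by Lemma \ref{ef}, whence $sg=s$ by Lemma \ref{bighelp}(1). So \ref{i2'} holds and $S$ is inductive by Corollary \ref{Emodcor}.

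For the identity $e\wedge f = e\wedge(e\cdot f)$ I would apply the equivalence just proved to the pair $(e,e\cdot f)$: the meet $e\wedge(e\cdot f)$ is the unique element of $E$ that is $\sim_r$ to $(e\cdot(e\cdot f))e$. By law \ref{m4} of Proposition \ref{modalaws} with $s=t=e$ and idempotency of $e$, $e\cdot(e\cdot f)=(ee)\cdot f=e\cdot f$, so $(e\cdot(e\cdot f))e=(e\cdot f)e\sim_r e\wedge f$; right pre-reducedness of $E$ then gives $e\wedge f = e\wedge(e\cdot f)$.

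Finally, for the right-reduced case, write $g=e\wedge f$, so $(e\cdot f)e\sim_r g$ by the first part; I want $(e\cdot f)e=g$. Using Proposition \ref{eqmod0cor} with $s=e$, one checks $g\leq_r e\cdot f$ (since $gef=ge$, because $g=ge$ and $g=gf$). Right-reducedness upgrades $g\leq_r e$ to $g=eg$ and $g\leq_r e\cdot f$ to $g=(e\cdot f)g$. Then $(e\cdot f)e\,g=(e\cdot f)\,g=g$, while $(e\cdot f)e\sim_r g$ gives $(e\cdot f)e=(e\cdot f)e\,g$; hence $(e\cdot f)e=g$. I expect this last step to be the main obstacle: the trick is to notice that right-reducedness lets one slide $g$ to the \emph{left} past both $e$ and $e\cdot f$, collapsing $(e\cdot f)eg$ all the way down to $g$; the rest is routine bookkeeping with $\sim_r$ and Lemmas \ref{bighelp} and \ref{ef}.
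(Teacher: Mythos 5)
Your proof is correct. The main equivalence and the right-reduced computation follow essentially the paper's own route: Lemma \ref{ef} identifies $(e\cdot f)e$ as a greatest lower bound of $e,f$ in $E(S)$, right pre-reducedness and Lemma \ref{bighelp}(1) transfer this to the representative $g\in E$ and verify condition \ref{i2'}, and the final identity $(e\cdot f)e=e\wedge f$ is extracted by sliding $e\wedge f$ leftward past $e$ and $e\cdot f$ using right-reducedness, exactly as in the paper. The one genuine divergence is your proof of $e\wedge f=e\wedge(e\cdot f)$: the paper obtains this by computing $(e,e)(f,f)$ in the induced left restriction semigroup $Rest(E,S)$ and reading off the first coordinate, whereas you reapply the just-established characterisation to the pair $(e,\,e\cdot f)$ and use law \ref{m4} to get $e\cdot(e\cdot f)=(ee)\cdot f=e\cdot f$, so that $e\wedge(e\cdot f)\sim_r(e\cdot f)e\sim_r e\wedge f$ and right pre-reducedness forces equality. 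Your version stays entirely inside the monoid $S$ and its modal operation, avoiding any appeal to the completion construction, which is arguably cleaner at this point in the development; the paper's version is shorter once $Rest(E,S)$ is available. Both are valid.
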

\begin{proof}
	Suppose that for all $e,f\in E$, the idempotent $(e\cdot f)e\, {\mc L}\, g$ for some $g\in E$.  Now $(e\cdot f)e$ is a greatest lower bound within $E(S)$ of $e,f\in E$ by Lemma \ref{ef}.  Hence $g$ must be their greatest lower bound in $E$.  If $se=s=sf$ for some $s\in S$, then by Lemma \ref{ef}, $s(e\cdot f)e=s$, and so by the first part of Lemma \ref{bighelp}, $sg=s$ as well.  Hence $S$ is inductive.     
	
	Conversely, suppose $S$ is inductive.  By Lemma \ref{ef}, $(e\cdot f)e$ is a greatest lower bound of $e,f$ in $E(S)$, so $e\wedge f\ \omega^l\  (e\cdot f)e$.  Conversely, $(e\cdot f)ee=(e\cdot f)e$ and $(e\cdot f)ef=(e\cdot f)e$, so $((e\cdot f)e)(e\wedge f)=(e\cdot f)e$, so $(e\cdot f)e\ \omega^l\  e\wedge f$.  So $e\wedge f \, {\mc L}\, (e\cdot f)e$.  
	
	Again, if $S$ is inductive, then, for $e,f\in E$, working in $Rest(E,S)$ we have $(e\wedge f,e\wedge f)=(e,e)(f,f)=(e\wedge (e\cdot f),e\wedge (e\cdot f))$, so $e\wedge f=e\wedge (e\cdot f)$.
	
	Suppose $E$ is right reduced.  Then $\omega^l$ is the natural order $\omega$ on $E$.  Hence for all $e,f\in E$, $(e\wedge f)ef=(e\wedge f)e$, so $e\wedge f\ \omega\ e\cdot f$, and so because $e\wedge f\ \omega\ e$ and $(e\cdot f)e\, {\mc L}\, e\wedge f$, we have 
	$$(e\cdot f)e=(e\cdot f)e(e\wedge f)=(e\cdot f)(e\wedge f)=e\wedge f,$$
	as claimed.
\end{proof}

\begin{dfn}
The modal left $E$-monoid has {\em definable meets} if $(e\cdot f)e\in E$ for all $e,f\in E$. 
\end{dfn}

From the previous result, a modal left $E$-monoid with definable meets is inductive, with meet in $E$ given by $e\wedge f=(e\cdot f)e$ for all $e,f\in E$; moreover any modal left $E$-monoid in which $E$ is right reduced has definable meets.  

Another way to think about modal left $E$-monoids is in terms of arbitrary sets of idempotents.

\begin{dfn}
Let $S$ be a monoid with $1\in E\subseteq E(S)$.  We say $S$ is {\em (left) $E$-protomodal} if for every $e\in E$, and $s\in S$, $Eq(s,se)$ is non-empty and generated as a left ideal by a member of $E$.  If $E=E(S)$, we say $S$ is {\em protomodal}.
\end{dfn}

So if $E$ is right pre-reduced, then ``left $E$-protomodal = left $E$-modal".

\begin{dfn}
Let $S$ be a monoid with $1\in E\subseteq E'\subseteq E(S)$.  We say {\em $E$ is maximal right pre-reduced in $E'$} if $E$ is right pre-reduced and every element of $E'$ is related by ${\mc L}$ to a (necessarily unique) member of $E$.
\end{dfn}

If $E'=E(S)$ in the above, then we recover the definition of $E$ being maximal right pre-reduced.
Evidently if $F,G$ are maximal right pre-reduced in $E\subseteq E(S)$, then $F\sim_l G$.

Not surprisingly, $E$-protomodal monoids are closely related to left $E$-modal monoids.

\begin{pro}  \label{Eprot}
Let $S$ be a monoid with $1\in F\subseteq E(S)$, and suppose $E$ is right pre-reduced in $F$.  Then $1\in E$, and $S$ is $F$-protomodal if and only if $S$ is a modal left $E$-monoid.  If also $S$ is integral and $0\in F$, then $0\in E$ and $S$ is $F$-protomodal if and only if $S$ is a modal left $E$-monoid with zero.
\end{pro}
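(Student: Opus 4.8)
The plan is to reduce the whole statement to the behaviour of left equalizers, passing freely between an idempotent $e\in F$ and the unique member $e'\in E$ with $e\sim_r e'$ via Proposition~\ref{eq} (giving $Eq(s,se)=Eq(s,se')$ whenever $e\sim_r e'$) and Lemma~\ref{lisame} (giving $Se=Se'$ from $e\sim_r e'$). First I would settle the membership assertions: since $E$ is maximal right pre-reduced in $F$, every element of $F$ is $\sim_r$-related to some member of $E$; applied to $1\in F$ this yields $e\in E$ with $1\sim_r e$, whence $e=1$, so $1\in E$. In the integral case with $0\in F$ the same reasoning applied to $0$ gives $e\in E$ with $e\sim_r 0$, whence $e=e0=0$, so $0\in E$. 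By Proposition~\ref{01}, $S$ is then a left $E$-monoid, and a left $E$-monoid with zero in the integral case.

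For the equivalence, first assume $S$ is $F$-protomodal. Given $t\in S$ and $e\in E\subseteq F$, $F$-protomodality gives $Eq(te,t)=Eq(t,te)=Sg$ for some $g\in F$; choosing $g'\in E$ with $g\sim_r g'$, Lemma~\ref{lisame} yields $Eq(te,t)=Sg'$, so $g'$ is a left equalizer of $(te,t)$ lying in $E$, unique there by Proposition~\ref{eq} and right pre-reducedness of $E$. Rewriting $s\in Sg'$ as $sg'=s$ and $s\in Eq(te,t)$ as $ste=st$ shows $t\cdot e:=g'$ witnesses \ref{i1'}, so $S$ is a modal left $E$-monoid. Conversely, if $S$ is a modal left $E$-monoid, take $e\in F$ and $s\in S$, choose $e'\in E$ with $e\sim_r e'$, and note $Eq(s,se)=Eq(s,se')$ by Proposition~\ref{eq}; modality supplies a left equalizer $s\cdot e'\in E$ of $(se',s)$, i.e. $Eq(s,se')=S(s\cdot e')$, so $Eq(s,se)=S(s\cdot e')$ is non-empty and is generated as a left ideal by $s\cdot e'\in E\subseteq F$, giving $F$-protomodality.

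For the integral case the same arguments apply with ``modal left $E$-monoid'' replaced by ``modal left $E$-monoid with zero'': the additional data (integrality, a zero, and $0\in E$) was secured above, and, as in Proposition~\ref{0agrees}, the $t=0$ instance of \ref{i1'} is automatic once $0\in E$, so nothing else is needed. The one point needing care throughout is to keep the two sides apart --- $F$-protomodality is phrased via generators in $F$, modality of the left $E$-monoid via left equalizers lying in $E$ --- but this is handled entirely by the two dictionary identities above, so I do not expect a genuine obstacle; the proof is essentially bookkeeping with $\sim_r$-classes.
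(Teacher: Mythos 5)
Your proof is correct and follows essentially the same route as the paper's: establish $1\in E$ (and $0\in E$) from the $\sim_r$-class condition, then translate between left equalizers in $F$ and in $E$ using Proposition~\ref{eq} and Lemma~\ref{lisame}, with uniqueness in $E$ coming from right pre-reducedness. The handling of the integral case as a trivial add-on also matches the paper.
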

\begin{proof}
Let $S,F$ and $E$ be as in the proposition statement.

That $1\in E$ follows because $1$ is the only idempotent in its ${\mc L}$-class. If $S$ is integral and $0\in F$ then $(f,0)\in {\mc L}$ for some $f\in E$, so $f=f0=0$, and so $0\in E$.

Suppose $S$ is $F$-protomodal.  Pick $e\in E\subseteq F$.  Then for $s\in S$, $Eq(s,se)$ is generated by some $f\in F$, hence by some $f'\in E$ with $f\, {\mc L}\, f'$ by Proposition \ref{eq}.  Because $E$ is right pre-reduced, $S$ is a modal left $E$-monoid.  If $S$ is integral and $0\in F$, then as above $0\in E$ and so $S$ is a left $E$-monoid with zero, hence a modal one.

Conversely, suppose $S$ is a modal left $E$-monoid (with or without zero).  For $s\in S$ and $e'\in F$, there is $e\in E$ such that $e\, {\mc L}\, e'$, and then there is $f\in E\subseteq F$ that generates $Eq(s,se)=Eq(s,se')$ (using Proposition \ref{eq}), so $S$ is $F$-protomodal. 
\end{proof}

\begin{cor} \label{eqmod}
Let $S$ be a monoid.  The following are equivalent:
\ben
\item $S$ is a modal left $E$-monoid for some maximal right pre-reduced $E\subseteq E(S)$;
\item $S$ is a modal left $E$-monoid for every maximal right pre-reduced $E\subseteq E(S)$;
\item $S$ is protomodal.  
\een
\end{cor}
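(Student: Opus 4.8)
The plan is to obtain this almost immediately from Proposition~\ref{Eprot} by specialising $F$ to $E(S)$, together with the observation (from Section~\ref{preredsec}) that maximal right pre-reduced subsets of $E(S)$ always exist. The first thing I would record is that a subset $E\subseteq E(S)$ is maximal right pre-reduced, in the sense of Section~\ref{preredsec}, exactly when it is right pre-reduced in $E(S)$ in the sense of the definition preceding Proposition~\ref{Eprot}: both simply say that $E$ is right pre-reduced and that every idempotent of $S$ is $\sim_r$-related to a member of $E$. Since $S$ is a monoid we have $1\in E(S)$, and choosing one idempotent from each $\sim_r$-class produces such an $E$; so the family of maximal right pre-reduced subsets is always non-empty.

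For $(1)\Rightarrow(3)$: if $S$ is a modal left $E$-monoid for some maximal right pre-reduced $E\subseteq E(S)$, then $E$ is right pre-reduced in $F:=E(S)$, and the ``modal left $E$-monoid $\Rightarrow$ $F$-protomodal'' direction of Proposition~\ref{Eprot} gives that $S$ is $E(S)$-protomodal, i.e.\ protomodal. For $(3)\Rightarrow(2)$: if $S$ is protomodal, i.e.\ $E(S)$-protomodal, and $E$ is \emph{any} maximal right pre-reduced subset of $E(S)$, then again $E$ is right pre-reduced in $F=E(S)$, so the reverse direction of Proposition~\ref{Eprot} yields that $S$ is a modal left $E$-monoid. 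Finally $(2)\Rightarrow(1)$ needs only that at least one maximal right pre-reduced subset of $E(S)$ exists, which was noted above.

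I expect no genuine obstacle here: the corollary is essentially Proposition~\ref{Eprot} read with $F=E(S)$, plus a ``for some/for every'' bookkeeping argument. The only two points worth stating carefully are (a) that ``maximal right pre-reduced (in $E(S)$)'' coincides with ``right pre-reduced in $E(S)$'', so that Proposition~\ref{Eprot} applies with $F=E(S)$, and (b) the non-emptiness of the family of such $E$, needed for $(2)\Rightarrow(1)$ — both already available from Section~\ref{preredsec}. As an alternative to routing the ``every'' direction through Proposition~\ref{Eprot}, one could transfer modality between any two maximal right pre-reduced subsets using Proposition~\ref{lmequiv}, since all such subsets are $\sim_r$-equivalent; but going via Proposition~\ref{Eprot} is cleaner and simultaneously delivers the link to protomodality.
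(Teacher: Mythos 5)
Your proposal is correct and is precisely the intended derivation: the paper states this corollary without separate proof as the specialisation of Proposition~\ref{Eprot} to $F=E(S)$, and your cycle $(1)\Rightarrow(3)\Rightarrow(2)\Rightarrow(1)$ together with the observation that maximal right pre-reduced subsets of $E(S)$ always exist supplies exactly the bookkeeping that is being left implicit. Your clarification that the hypothesis ``$E$ is right pre-reduced in $F$'' in Proposition~\ref{Eprot} should be read as ``maximal right pre-reduced in $F$'' is also the right reading, since the proposition's proof uses that every element of $F$ is $\sim_r$-related to a member of $E$.
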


There are monoids which are not protomodal.  Let $S=\{0,1,a\}$ with $0$ a zero, $1$ an identity and $a^2=0$; then $Eq(a,a0)=Eq(a,0)=\{0,a\}$, which is not generated by an idempotent, so $S$ is not protomodal. 

Even if a monoid is not protomodal, it will always have at least a rather trivial inductive left $E$-monoid structure for some choice of $E$.  

\begin{pro}  \label{smallind}
Every monoid $S$ is an inductive left $\{1\}$-monoid, with $t\cdot 1=1$ for all $t\in S$.  Every integral monoid with zero is an inductive left $\{0,1\}$-monoid, with $t\cdot 1=1$ for all $t\in S$, and $t\cdot 0=0$ for non-zero $t$, with $0\cdot 0=1$.  
\end{pro}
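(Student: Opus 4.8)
\medskip
\noindent The plan is to verify both claims directly from Corollary \ref{Emodcor}: in each case I exhibit the action $\cdot\colon S\times E\to E$ and check that $E$ is right pre-reduced and that conditions \ref{i1'} and \ref{i2'} hold (equivalently, one could check the laws \ref{m1}--\ref{m5} of Proposition \ref{modalaws}). For the first claim, take $E=\{1\}$ and set $t\cdot 1:=1$ for every $t\in S$. As a singleton, $E$ is trivially right pre-reduced and trivially a meet-semilattice under $\leq_r$ with $1\wedge 1=1$. For \ref{i1'} with $e=1$, the equivalence ``$st1=st$ iff $s(t\cdot 1)=s$'' holds for all $s$ because both sides hold identically, so $t\cdot 1=1$ is the required element; \ref{i2'} is immediate since $se=sf=s$ with $e=f=1$ gives $s(e\wedge f)=s1=s$. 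Hence $S$ is an inductive left $\{1\}$-monoid. (Via Proposition \ref{modalaws}, each of \ref{m1}--\ref{m5} collapses to an identity among copies of $1$.)

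For the second claim, let $S$ be an integral monoid with zero, take $E=\{0,1\}$, and define $t\cdot 1:=1$ for all $t$, $t\cdot 0:=0$ for $t\neq 0$, and $0\cdot 0:=1$. First, $E$ is right pre-reduced: $0\leq_r 1$ since $0\cdot 1=0$, whereas $1\leq_r 0$ would force $1=1\cdot 0=0$, impossible in a nontrivial monoid; and $(E,\leq_r)$ is a meet-semilattice with $0\wedge 1=0$. For \ref{i1'} with $e=1$ the argument is as in the first claim. For $e=0$ and $t\neq 0$: since $t0=0$ we have $st0=st$ iff $st=0$, which by integrality and $t\neq 0$ holds iff $s=0$; and $s(t\cdot 0)=s0=0=s$ iff $s=0$; so the two conditions coincide and $t\cdot 0=0$ works. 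For $e=0$ and $t=0$: both $s00=s0$ and $s(0\cdot 0)=s1=s$ hold identically, so $0\cdot 0=1$ works. For \ref{i2'}: the semilattice part is noted, and if $se=sf=s$ with $\{e,f\}=\{0,1\}$ then $s0=s$ forces $s=0$, so $s(e\wedge f)=s0=0=s$, while the case $e=f$ is trivial. Hence $S$ is an inductive left $\{0,1\}$-monoid.

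Essentially no hypothesis is needed beyond integrality, and integrality is used at exactly one point: the $e=0$, $t\neq 0$ case of \ref{i1'}, where it makes the equalizing set $Eq(0,t)=\{s\in S\mid st=0\}$ collapse to $S0=\{0\}$, so that it is generated by the idempotent $0\in E$. (For a left zero divisor $t\neq 0$ this set lies strictly between $\{0\}$ and $S$, hence is generated by neither $0$ nor $1$, and the formula $t\cdot 0=0$ would fail.) So there is no real obstacle here: the proof is a short case analysis followed by an appeal to Corollary \ref{Emodcor}.
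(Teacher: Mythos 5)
Your proof is correct and follows essentially the same route as the paper: the paper verifies \ref{i1'} by computing the equalizing sets $Eq(t1,t)=S1$, $Eq(t0,t)=S0$ (for $t\neq 0$, using integrality) and $Eq(0,0^2)=S1$, which is exactly your case analysis phrased in the protomodal language, and then checks \ref{i2'} by the same case split. Your closing remark pinpointing where integrality is needed is a nice, accurate addition but does not change the substance.
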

\begin{proof}
The first assertion is almost trivial. 

For the second, clearly $Eq(t1,t)=S1$.  For non-zero $t$, $st0=st$ if and only if $s=0=s0$, so $Eq(t0,t)=S0$, and $Eq(0,0^2)=S=S1$.  So $S$ is $\{0,1\}$-protomodal, and hence is a modal $\{0,1\}$-monoid since $\{0,1\}$ is obviously right pre-reduced.  The condition \ref{i2'} is easily checked by a case analysis.
\end{proof}

More interestingly, we next show that every monoid has a largest $E$ (modulo ${\mc L}$) such that it is modal as a left $E$-monoid, and in that case it is inductive as a left $E$-monoid.  (Indeed in the above three-element example $S$, this largest choice is obviously $E=\{1\}$.)  

\begin{pro}  \label{maxmodE}
Let $S$ be a monoid, and define 
\[
G=\bigcup \{E\subseteq E(S)\mid S\mbox{ is $E$-protomodal}\}.
\] 
Then $S$ is $G$-protomodal, and if $S$ is also $H$-protomodal, then $H\subseteq G$.  Choosing $E'$ to be maximal right pre-reduced in $G$, $S$ is a modal left $E'$-monoid, and $E'$ is largest possible in the sense that if $S$ is a modal left $E$-monoid, then $E$ is right-equivalent to a subset of $E'$.  Moreover the left $E'$-monoid $S$ is inductive.
\end{pro}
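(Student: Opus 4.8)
The plan is to treat the five assertions in turn, with essentially all the work in the last one. To begin, observe that $S$ is trivially $\{1\}$-protomodal (for $e=1$ we have $Eq(s,s1)=Eq(s,s)=S=S1$), so the family in the definition of $G$ is non-empty and $1\in G\subseteq E(S)$. For $G$-protomodality: given $e\in G$ and $s\in S$, there is some $E$ in the family with $e\in E$, and then $Eq(s,se)$ is non-empty and, being generated as a left ideal by a member of $E\subseteq G$, is generated by a member of $G$; hence $S$ is $G$-protomodal. The implication ``$S$ is $H$-protomodal $\Rightarrow H\subseteq G$'' is immediate since $H$ is then one of the sets whose union is $G$. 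For the third assertion, choose $E'$ maximal right pre-reduced in $G$ (possible by picking one representative from each $\sim_r$-class in $G$, exactly as for maximal right pre-reduced subsets of $E(S)$); since $S$ is $G$-protomodal with $1\in G\subseteq E(S)$ and $E'$ is maximal right pre-reduced in $G$, Proposition \ref{Eprot} applied with $F=G$ gives that $1\in E'$ and that $S$ is a modal left $E'$-monoid.

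Next, the maximality statement. Let $S$ be a modal left $E$-monoid. Then $E$ is right pre-reduced, so ``left $E$-protomodal $=$ left $E$-modal'' makes $S$ $E$-protomodal, whence $E\subseteq G$ by the second assertion. Because $E'$ is maximal right pre-reduced in $G$, each $e\in E$ is $\sim_r$-related to a unique $\bar e\in E'$; the assignment $e\mapsto\bar e$ is injective, since $\bar e_1=\bar e_2$ forces $e_1\sim_r e_2$ and hence $e_1=e_2$ as $E$ is right pre-reduced. Thus $\{\bar e\mid e\in E\}$ is a right pre-reduced subset of $E'$ that is right-equivalent to $E$, as required.

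The substantial step is showing that the modal left $E'$-monoid $S$ is inductive. My approach is to enlarge $E'$ to $E''=E'\cup\{(e\cdot f)e\mid e,f\in E'\}$, where $\cdot$ is the left $E'$-modal operation; by Lemma \ref{ef} each $(e\cdot f)e$ is an idempotent with $(e\cdot f)e\leq_r e,f$ (a greatest lower bound of $e,f$ in $(E(S),\leq_r)$), so $1\in E''\subseteq E(S)$. I would then verify that $S$ is $E''$-protomodal. For $h=e\in E'$ this is just modality: $Eq(se,s)=S(s\cdot e)$ with $s\cdot e\in E'\subseteq E''$. For $h=(e\cdot f)e$, using $(e\cdot f)e\leq_r e,f$ together with the second part of Lemma \ref{ef} one gets $\{v\mid v(e\cdot f)e=v\}=\{v\mid ve=v=vf\}$, and hence $Eq\big(s,s(e\cdot f)e\big)=Eq(se,s)\cap Eq(sf,s)=S(s\cdot e)\cap S(s\cdot f)$; applying Lemma \ref{ef} once more, now to the idempotents $s\cdot e,s\cdot f\in E'$, identifies this intersection with $Sg$ for $g=\big((s\cdot e)\cdot(s\cdot f)\big)(s\cdot e)\in E''$. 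So $S$ is $E''$-protomodal, whence $E''\subseteq G$ by the second assertion; in particular every $(e\cdot f)e$ with $e,f\in E'$ lies in $G$ and is therefore $\sim_r$-related to a member of $E'$ (as $E'$ is maximal right pre-reduced in $G$). Proposition \ref{conjef} then yields that $S$ is an inductive left $E'$-monoid.

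I expect the only real obstacle to be the computation of $Eq\big(s,s(e\cdot f)e\big)$ and its identification with a principal left ideal on an idempotent of the prescribed form $\big((s\cdot e)\cdot(s\cdot f)\big)(s\cdot e)$: this is what forces the enlargement $E''$ to be ``closed enough'' that no iteration is needed. Everything else reduces quickly to Propositions \ref{Eprot} and \ref{conjef}, Lemma \ref{ef}, and routine bookkeeping with $\sim_r$-classes inside $G$.
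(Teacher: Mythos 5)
Your proposal is correct and follows essentially the same route as the paper: both establish $G$-protomodality directly, pass to $E'$ via Proposition \ref{Eprot}, and then prove inductivity by showing $S$ is protomodal with respect to the enlarged set $E'\cup\{(e\cdot f)e\mid e,f\in E'\}$, with $Eq(s,s(e\cdot f)e)$ generated by $((s\cdot e)\cdot(s\cdot f))(s\cdot e)$, before invoking Lemma \ref{ef} and Proposition \ref{conjef}. Your organisation of that key computation as an intersection of principal left ideals, and your explicit verification of the maximality clause, are only cosmetic refinements of the paper's argument.
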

\begin{proof}
First note that $G\neq\emptyset$ by Proposition \ref{smallind}.
Suppose $s\in S$ and $e\in G$.  Then $e\in E$ for some $E\subseteq E(S)$ for which $S$ is $E$-protomodal, so there is $f\in E\subseteq G$ that generates $Eq(s,se)$.  So $S$ is $G$-protomodal, and so by Proposition \ref{Eprot}, $S$ is a modal left $E'$-monoid. Otherwise, only the final statement does not follow immediately.  

Define $F=\{(e\cdot f)e\mid e,f \in E'\}$.  Then $E'\subseteq F$ since for all $e\in E$, $e=(e\cdot e)e\in F$. Let $s\in S$ and $e\in F$.  Then $e=(f\cdot g)f$ for some $f,g\in E'$.  We show that $Eq(s,se)$ is generated by $h=((s\cdot f)\cdot (s\cdot g))(s\cdot f)\in F$, as computed in the modal left $E'$-monoid $S$.  Because $(s\cdot f)sf=(s\cdot f)s$, $(s\cdot g)sg=(s\cdot g)s$ and $h(s\cdot f)=h(s\cdot f)(s\cdot g)$, we have that $hsfg=hsg=hs=hsf$.  Hence, $hs\in Eq(fg,g)$, and so $hse=hs(f\cdot g)f=hsf=hs$, giving that $h\in Eq(se,s)$.  Suppose $t\in Eq(se,s)=Eq(s(f\cdot g)f,s)$.  Then $ts=ts(f\cdot g)f=ts(f\cdot g)fg$, so $ts=tsf=tsg$, so $t(s\cdot f)=t(s\cdot g)=t$, and so by Lemma \ref{ef}, $th=t$.   So $Eq(se,s)=Sh$. Hence, $S$ is $F$-protomodal and so $F\subseteq G$.  It follows from the definition of $E'$ that for all $e,f\in E'$, there is $g\in E'$ such that $g\, {\mc L}\, (e\cdot f)e\in F\subseteq G$, so by Proposition \ref{conjef}, the modal left $E'$-monoid $S$ is inductive. 
\end{proof}

\begin{cor}
Every modal left $E$-monoid $S$ is an inductive left $E'$-monoid for some $E'$ containing $E$. 
\end{cor}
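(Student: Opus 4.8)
The plan is to obtain the corollary almost directly from Proposition~\ref{maxmodE}, the only real work being to keep the enlarged idempotent set set-theoretically above $E$. So suppose $S$ is a modal left $E$-monoid. By definition $E$ is right pre-reduced with $1\in E\subseteq E(S)$, and for every $e\in E$ and $s\in S$ the equalizing set $Eq(s,se)=S(s\cdot e)$ is generated as a left ideal by the idempotent $s\cdot e\in E$; hence $S$ is $E$-protomodal. (Equivalently, one invokes the remark ``left $E$-protomodal = left $E$-modal'' for right pre-reduced $E$, or applies Proposition~\ref{Eprot} with $F=E$.) Consequently $E$ is contained in the set $G=\bigcup\{E\subseteq E(S)\mid S\text{ is }E\text{-protomodal}\}$ appearing in Proposition~\ref{maxmodE}, and by that proposition $S$ is $G$-protomodal.

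Next I would enlarge $E$ to a maximal right pre-reduced subset $E'$ of $G$ with $E\subseteq E'$. Since $E$ is a right pre-reduced subset of $G$, the relation $\sim_r$ meets $E$ in at most one point of each of its classes in $G$; choosing one element of $G$ from every $\sim_r$-class of $G$ not already represented in $E$ and adjoining these to $E$ produces $E'$ with $1\in E\subseteq E'\subseteq G$ and $E'$ maximal right pre-reduced in $G$. This is exactly the $G$-relative version of the extension observation recorded for $E(S)$ in Section~\ref{preredsec}; it is the one place where a small amount of care (and a choice) is needed, precisely because the paper's stated extension fact was phrased for $E(S)$ rather than for an arbitrary set $G$ of idempotents.

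Finally, Proposition~\ref{maxmodE} applies verbatim to this $E'$: because $S$ is $G$-protomodal and $E'$ is maximal right pre-reduced in $G$, $S$ is a modal, and indeed inductive, left $E'$-monoid (the inductivity being the content of the last part of that proof, via Lemma~\ref{ef} and Proposition~\ref{conjef}). Since $E\subseteq E'$, this is the assertion. I do not expect any genuine obstacle here: all the substantive content — that $S$ is $G$-protomodal, and that any maximal right pre-reduced subset of $G$ yields an inductive structure — is already established in Proposition~\ref{maxmodE}, so the corollary really is just an unpacking of that result together with the routine extension step just described.
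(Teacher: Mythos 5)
Your proposal is correct and follows exactly the route the paper intends: the corollary is stated without proof as an immediate consequence of Proposition~\ref{maxmodE}, and the only point needing attention is the one you identify, namely that since $E$ is right pre-reduced and contained in $G$, it can be extended (rather than merely mapped $\sim_r$-equivalently) to a maximal right pre-reduced subset $E'$ of $G$, to which the proposition then applies. Nothing is missing.
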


\begin{cor}  \label{cormax}
If $S$ is a modal left $E$-monoid in which $E$ is maximal right pre-reduced, then it is inductive, and for $e,f\in E$, $e\wedge f\, {\mc L}\, (e\cdot f)e$.
\end{cor}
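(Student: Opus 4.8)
The plan is to deduce Corollary~\ref{cormax} directly from Lemma~\ref{ef} and Proposition~\ref{conjef}, keeping Proposition~\ref{maxmodE} in reserve as an alternative route. Recall at the outset that $S$ being a modal left $E$-monoid already forces $E$ to be right pre-reduced, so the left $E$-modal operation $\cdot$ is available and the laws (M1)--(M4) of Proposition~\ref{modalaws} hold.

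First I would fix $e,f\in E$ and look at the element $(e\cdot f)e$. By Lemma~\ref{ef} this element is idempotent --- it is the greatest lower bound of $e$ and $f$ within $E(S)$ with respect to $\leq_r$ --- so $(e\cdot f)e\in E(S)$, even though a priori it need not lie in $E$ itself. This is the one point worth flagging: the reason the corollary is not completely trivial is precisely that $(e\cdot f)e$ lives in $E(S)$ rather than in $E$, and this is exactly where the maximality hypothesis is consumed.

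Next I would invoke that hypothesis. Since $E$ is maximal right pre-reduced in $E(S)$, every idempotent of $S$ is $\sim_r$-related to a (necessarily unique) member of $E$; applying this to $(e\cdot f)e$ produces $g\in E$ with $(e\cdot f)e\sim_r g$. As $e,f$ were arbitrary, this verifies the hypothesis of the ``if'' direction of Proposition~\ref{conjef}. That proposition then immediately gives that $S$ is inductive as a left $E$-monoid and that $e\wedge f=g$; combining $e\wedge f=g$ with $(e\cdot f)e\sim_r g$ yields $e\wedge f\sim_r(e\cdot f)e$, as required.

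There is essentially no hard step here once Lemma~\ref{ef} supplies the idempotency of $(e\cdot f)e$ and Proposition~\ref{conjef} is in hand. An alternative argument routes through Proposition~\ref{maxmodE}: $S$ is $E$-protomodal by Proposition~\ref{Eprot}, hence $E\subseteq G$, and comparing the maximal right pre-reduced $E'\subseteq G$ from Proposition~\ref{maxmodE} with $E$ --- using that $E$ is maximal in all of $E(S)$ while $E'$ is right pre-reduced --- shows $E\sim_r E'$; since $S$ is inductive as a left $E'$-monoid, Proposition~\ref{lmequiv} transfers inductivity (and the meet formula) back to $E$. I would present the first argument, since it is shorter and avoids the bookkeeping of matching $E$ against $E'$.
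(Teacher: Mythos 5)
Your proposal is correct and matches the intended derivation: the paper states this corollary without proof immediately after Proposition~\ref{maxmodE}, but the engine in either route is Proposition~\ref{conjef}, and your direct argument --- $(e\cdot f)e\in E(S)$ by Lemma~\ref{ef}, maximality of $E$ supplies $g\in E$ with $(e\cdot f)e\sim_r g$, then Proposition~\ref{conjef} yields inductivity and $e\wedge f=g\sim_r(e\cdot f)e$ --- is exactly the core of what the paper's proof of Proposition~\ref{maxmodE} does in the more general setting of $G$. You also correctly identify where the maximality hypothesis is consumed, namely in promoting $(e\cdot f)e$ from $E(S)$ to something $\sim_r$-equivalent inside $E$.
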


\begin{cor}  
If $S$ is protomodal, then for any maximal right pre-reduced $E\subseteq E(S)$, $S$ is an inductive left $E$-monoid such that $e\wedge f\, {\mc L}\, (e\cdot f)e$ for all $e,f\in E$.
\end{cor}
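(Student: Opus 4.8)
The plan is to assemble this corollary directly from the two results immediately preceding it, so almost no new work is required. First I would invoke Corollary \ref{eqmod}: since $S$ is protomodal, clause (3) of that corollary holds, hence by the stated equivalence so does clause (2), namely that $S$ is a modal left $E$-monoid for \emph{every} maximal right pre-reduced $E\subseteq E(S)$. Fixing an arbitrary such $E$, we therefore have that $S$ is a modal left $E$-monoid with $E$ maximal right pre-reduced.

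Next I would simply apply Corollary \ref{cormax}, whose hypothesis is exactly ``$S$ is a modal left $E$-monoid in which $E$ is maximal right pre-reduced''; its conclusion is precisely that $S$ is inductive and that $e\wedge f\sim_r (e\cdot f)e$ for all $e,f\in E$. Since $E$ was arbitrary subject to being maximal right pre-reduced in $E(S)$, this is exactly the assertion to be proved, and the proof terminates.

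The only point that might merit a sentence of care is checking that the phrase ``maximal right pre-reduced $E\subseteq E(S)$'' in the corollary statement is the same notion used in Corollaries \ref{eqmod} and \ref{cormax} (and ultimately back in Proposition \ref{maxmodE}): it is, being the $E'=E(S)$ case of ``$E$ maximal right pre-reduced in $E'$'', as remarked just after that definition. So there is no real obstacle here — the genuine content sits upstream in Proposition \ref{maxmodE} and Corollary \ref{conjef} (for inductivity and the meet formula modulo $\sim_r$) and in Proposition \ref{Eprot}/Corollary \ref{eqmod} (for the protomodal $\Leftrightarrow$ modal correspondence); this final statement is a short corollary recorded for emphasis.
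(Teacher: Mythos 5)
Your proposal is correct and is exactly the intended derivation: the paper states this corollary without proof as an immediate consequence of Corollary \ref{eqmod} (protomodal $\Leftrightarrow$ modal left $E$-monoid for every maximal right pre-reduced $E$) combined with Corollary \ref{cormax} (modal plus $E$ maximal right pre-reduced implies inductive with $e\wedge f\sim_r (e\cdot f)e$). Your remark that ``maximal right pre-reduced'' here is the $E'=E(S)$ case of the relative notion is also consistent with the paper's usage, so nothing further is needed.
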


Recall from Definition \ref{restdef} that if $S$ is an inductive left $E$-monoid (as in Definition \ref{inddef}), then
$$Rest(E,S)=\{(e,s)\mid e\in E, s\in S,es=s\},$$
a left restriction monoid with multiplication given by $$(e,s)(f,t)=(e\wedge (s\cdot f), (e\wedge (s\cdot f))st),$$ 
and in which $D((e,s))=(e,e)$.  If $S$ is integral, $Rest(E,S)$ contains as a submonoid closed under $D$ the left restriction monoid with zero
$$Rest_0(E,S)=\{(e,s)\in Rest(E,S) \mid e=0\Rightarrow s=0\}.$$

\begin{pro}  \label{restS}
Suppose $S$ is protomodal.  Then for $E,E'$ maximal right pre-reduced sets of idempotents, $Rest(E,S)\cong Rest(E',S)$ as left restriction monoids, and if $S$ is integral then $Rest_0(E,S)\cong Rest_0(E',S)$.
\end{pro}
\begin{proof}
Now $S$ is an inductive left $E$-monoid and an inductive left $E'$-monoid, integral if $S$ is integral as a monoid.  Of course $E\sim_l E'$, so by Proposition \ref{0agrees}, $Rest(E,S)\cong Rest(E',S)$, and if $S$ is integral, $Rest_0(E,S)\cong Rest_0(E',S)$.
\end{proof}

\begin{dfn}  \label{restSdef}
Suppose $S$ is protomodal.  If $E$ is a maximal right pre-reduced set of idempotents of $S$, denote the isomorphism class of $Rest(E,S)$ (and $Rest_0(E,S)$ if $S$ is integral) by $Rest(S)$ (respectively $Rest_0(S)$).
\end{dfn}

Informally, we will write ``$A\cong Rest(S)$" rather than ``$A\in Rest(S)$".  In fact we can also obtain $Rest(S)$ as a left restriction semigroup defined independently of the choice of $E$, as follows.  (We omit the details, which are routine to check.)

Suppose $S$ is protomodal, and form the constellation $Q=C_{E(S)}(S)$.  Define the equivalence relation $\theta$ on $Q$ as follows: set $(e,s)\mathrel{\theta} (f,t)$ if and only if $e\, {\mc L}\, f$ and $es=et$ (equivalently, $fs=ft$).  This turns out to be a congruence on $Q$ which when factored out gives a constellation $Q/\theta\cong C_E(S)$ where $E$ is any maximal right pre-reduced set of idempotents of $S$, and so the induced left restriction semigroup on the inductive constellation $Q/\theta$ is in $Rest(S)$.  Something very similar can be done for $Rest_0(S)$.  This approach highlights the irrelevance of the choice of any particular maximal right pre-reduced set of idempotents $E\subseteq E(S)$, since all of $E(S)$ is used in the construction.  

\begin{eg} A small example.  \label{smalleg} \end{eg}

Consider the magma $S=\{1,e,f,g,s\}$, given by the following multiplication table. 
$$
\begin{array}{c|ccccc}
\times&e&1&f&g&s\\
\hline
e&e&e&e&e&e\\
1&e&1&f&g&s\\
f&e&f&f&e&e\\
g&s&g&s&g&s\\
s&s&s&s&s&s
\end{array}.
$$

It is routine to check that $T=\{e,f,g,s\}$ is a subalgebra.  Among its congruences are $\theta$ defined by the partition $\{e,f\},\{s,g\}$, and $\delta$ defined by the partition $\{e,s\},\{f\},\{g\}$.  It is easily seen that $T/\theta$ is a two-element left-zero semigroup, whereas $T/\delta$ is a semilattice (in which $\{e,s\}$ is the meet of $\{f\}$ and $\{g\}$).  Moreover $\theta\cap\delta$ is the diagonal relation, so $T$ is a subdirect product of these two semigroups and hence is itself a semigroup, which is evidently a band.  Indeed $T$ is a semilattice of left zero semigroups (namely its $\delta$-classes).  Clearly, $S$ is the semigroup $T$ with adjoined identity element and hence is a monoid.

Let $E=\{1,e,f,g\}$, a maximal right pre-reduced subset of $E(S)=S$, since $(e,s)\in {\mc L}$ but otherwise no two elements of $S$ are ${\mc L}$-related.  Clearly $e\ \omega^l\  f,g\ \omega^l\  1$.  It is not hard to check that there is a left $E$-modal operation defined as follows:

$$
\begin{array}{c|cccc}
\cdot&e&1&f&g\\
\hline
e&1&1&1&1\\
1&e&1&f&g\\
f&g&1&1&g\\
g&f&1&f&1\\
s&1&1&1&1
\end{array}.
$$

Since $E$ is maximal right pre-reduced, the modal left $E$-monoid $S$ is inductive by Corollary \ref{cormax}.  
Note that $f\wedge g=e$ but $(f\cdot g)f=gf=s$.  The only other maximal right pre-reduced choice involves replacing $e$ by $s$ in $E$ to give $E'=\{1,s,f,g\}$; it follows from Proposition \ref{lmequiv} that $S$ is an inductive left $E'$-monoid also.  
So letting $e'=s,s'=e$ with $f'=f,g'=g$ and $1'=1$, we see that as computed in $S$, $(g'\cdot f')g'=(g\cdot f)'g'=f'g'=e=g$, while $g'\wedge f'=(g\wedge f)'=e'=s$ so they are still unequal.  This shows that there are protomodal left $E$-monoids having no choice of maximal right pre-reduced $E$ for which the associated inductive $E$-monoid has definable meets. 

Note that $S'=\{e,1,f,s\}$ is a submonoid of $S$ above.  Letting $E''=\{1,f,s\}$, it is easy to see that $S'$ is an inductive left $E''$-monoid, and indeed that $h\wedge k=(h\cdot k)h$ for all $h,k\in E''$, yet $E''$ is not right reduced.  So a modal left $E$-monoid with definable meets need not have $E$ right reduced, showing that the converse of the final part of Proposition \ref{conjef} does not hold in general.

\section{Connection with the Zappa-Sz\'{e}p product}  \label{Zappa}

At this point it is worth noting that one could use an inductive $E$-monoid $S$ to define a semidirect product in the usual way, since $S$ acts on the semilattice $E$ by \ref{m2}, \ref{m4} and (M5).  Thus, we could define, for all $(e,s),(f,t)\in E\times S$,
$$(e,s)*(f,t)=(e\wedge (s\cdot f),st),\ D((e,s))=(e,1),$$ 
and the result is well-known to be a proper left restriction semigroup which we call $P(E,S)$ here; see \cite{manes} for example.  This is different to our definition of both domain and multiplication in $Rest(E,S)$, and the underlying set is all of $E\times S$.  However, we do have the following.

\begin{pro}  \label{semihom}
Let $S$ be an inductive left $E$-monoid.  Then the mapping $\psi: P(E,S)\rightarrow Rest(E,S)$ given by $(e,s)\psi=(e,es)$ is a surjective left restriction semigroup homomorphism which separates domain elements. 
\end{pro}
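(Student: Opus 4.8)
The plan is to verify directly that $\psi$ is a radiant-compatible semigroup homomorphism, that it respects $D$, that it is surjective, and finally that it separates domain elements. First I would check that $\psi$ lands in $Rest(E,S)$: for $(e,s)\in E\times S$ we have $e(es)=es$ and, since $S$ is a left $E$-monoid, $d(es)=1=d(e)$, so $(e,es)\in C^d_E(S)$, which is the underlying set of $Rest(E,S)$. Next I would check that $\psi$ respects $D$: $\psi(D((e,s)))=\psi((e,1))=(e,e\cdot 1)=(e,e)=D((e,es))=D(\psi((e,s)))$, using that $e1=e$.

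The main work is showing $\psi$ is a homomorphism. Take $(e,s),(f,t)\in P(E,S)$. On the left, $\psi((e,s)*(f,t))=\psi((e\wedge(s\cdot f),st))=(e\wedge(s\cdot f),(e\wedge(s\cdot f))st)$. On the right, by the multiplication formula in Corollary \ref{modrestcor},
\[
\psi((e,s))\,\psi((f,t))=(e,es)(f,ft)=(e\wedge((es)\cdot f),(e\wedge((es)\cdot f))(es)(ft)).
\]
So I need two things: first, $(es)\cdot f = s\cdot f$ in $E$, and second, that the resulting products of the "large" witnesses agree, i.e. $(e\wedge(s\cdot f))st=(e\wedge(s\cdot f))esft$. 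For the first, by (M4) in Proposition \ref{modalaws}, $(es)\cdot f = e\cdot(s\cdot f)$; and since $e\in E$ and the left $E$-modal operation satisfies $e\cdot g$ being the unique left equalizer of $(eg,e)$ in $E$, one checks that $e\cdot(s\cdot f)$ and $s\cdot f$ generate the same left ideal, hence are $\sim_r$-related, hence equal as $E$ is right pre-reduced — alternatively, observe $e\leq_r$ acts trivially on left ideals here because multiplying on the left by an element of $E$ doesn't change which idempotents equalize, cf. Lemma \ref{bighelp}(1). Actually the cleanest route: $(es)\cdot f$ is characterised (Proposition \ref{eqmod0cor}) by $h\leq_r (es)\cdot f \iff h\in Eq(es,esf)$, and $h\in Eq(es,esf)\iff hes=hesf$; and $s\cdot f$ by $h\leq_r s\cdot f\iff hs=hsf$. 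These are not literally the same, so instead I will use (M4) directly: $(es)\cdot f=e\cdot(s\cdot f)$, and then note that for any $e\in E(S)$ and $g\in E$, $e\cdot g\sim_r g$ is false in general — so I must be more careful and simply carry the term $e\cdot(s\cdot f)$ along rather than simplify it.

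Let me restructure: set $g=s\cdot f$. Then $(es)\cdot f=e\cdot g$, and I want $\psi((e,s))\psi((f,t))=(e\wedge(e\cdot g),(e\wedge(e\cdot g))esft)$. By Lemma \ref{ef}, $e\wedge(e\cdot g)\sim_r (e\cdot g)e$ — no wait, $e\wedge(e\cdot g)$ is meet in $E$ of the two elements $e$ and $e\cdot g$ of $E$, and by Proposition \ref{conjef}, $e\wedge g=e\wedge(e\cdot g)$! That is exactly the identity I want: $e\wedge(e\cdot g)=e\wedge g=e\wedge(s\cdot f)$. So the domain components of both sides agree. For the large components, write $k=e\wedge(s\cdot f)$; I need $k\cdot st = k\cdot esft$, i.e. $kst=kesft$. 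Now $k\leq_r e$ so $ke=k$, hence $kesft=ksft$; and $k\leq_r s\cdot f$, so by (M3) applied suitably (or Proposition \ref{eqmod0cor}), $ks=ksf$, whence $ksft=kst$. Thus $kst=kesft$ and the two products agree. Surjectivity is then immediate since for $(e,s)\in Rest(E,S)$ we have $es=s$ so $\psi((e,s))=(e,es)=(e,s)$. Finally, $\psi$ separates domain elements because $D$-classes map via $(e,e)\mapsto(e,e)$, which is injective on $\{(e,1):e\in E\}$. The main obstacle is the bookkeeping identity $e\wedge(e\cdot g)=e\wedge g$ together with $kst=kesft$; once Proposition \ref{conjef} and the modal laws are in hand, these are short computations.
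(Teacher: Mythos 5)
Your proof is correct and follows essentially the same route as the paper: verify well-definedness and $D$-preservation, reduce the homomorphism property to the identity $e\wedge((es)\cdot f)=e\wedge(s\cdot f)$, and then show $(e\wedge(s\cdot f))esft=(e\wedge(s\cdot f))st$ using $k\leq_r e$, $k\leq_r s\cdot f$ and \textup{(M3)}, with surjectivity and domain-separation handled exactly as in the paper. The only (minor) divergence is that you obtain the key identity from \textup{(M4)} together with $e\wedge(e\cdot g)=e\wedge g$ from Proposition \ref{conjef}, whereas the paper proves it directly by showing both sides have the same $\leq_r$-lower bounds in $E$; both derivations are valid, and the exploratory false start in your write-up (the attempt at $(es)\cdot f=s\cdot f$) should simply be deleted from a final version.
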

\begin{proof}
First, note that $\psi$ is surjective, since it fixes all elements of $C_E(S)\subseteq P(E,S)$.  If $(e,s)\in P(E,S)$, then $D((e,s))\psi=(e,1)\psi=(e,e)=D((e,es))=D((e,s)\psi)$, so $\psi$ respects $D$.  For $(e,s),(f,t)\in P(E,S)$, first note that for any $g\in E$, the following are equivalent: $g\ \omega^l\  e\wedge ((es)\cdot f)$; $g\ \omega^l\  e$ and $g\ \omega^l\  (es)\cdot f$; $g=ge$ and $ges=gesf$; $g=ge$ and $gs=gsf$; $g\ \omega^l\  e$ and $g\ \omega^l\  s\cdot f$; $g\ \omega^l\  e\wedge (s\cdot f)$.  (Here we have twice used \ref{i1'} in Corollary \ref{Emodcor}.)  Hence $e\wedge ((es)\cdot f)=e\wedge (s\cdot f)$.  Thus,
\bea
((e,s)\psi)((f,t)\psi)&=&(e,es)(f,ft)\\
&=&(e\wedge ((es)\cdot f),(e\wedge ((es)\cdot f))esft)\\
&=&(e\wedge (s\cdot f),(e\wedge (s\cdot f))esft)\\
&=&(e\wedge (s\cdot f),(e\wedge (s\cdot f))sft)\\
&=&(e\wedge (s\cdot f),(e\wedge (s\cdot f))(s\cdot f)sft)\\
&=&(e\wedge (s\cdot f),(e\wedge (s\cdot f))(s\cdot f)st)\\
&=&(e\wedge (s\cdot f),(e\wedge (s\cdot f))st)\\
&=&((e\wedge (s\cdot f),st)\psi\\
&=&((e,s)*(f,t))\psi,
\eea
so $\psi$ respects multiplication.  Trivially $\psi$ separates domain elements.
\end{proof}  

So every left restriction monoid with enough large idempotents is a domain-separating homomorphic image of a proper left restriction semigroup, specifically of a semidirect product of a monoid with a semilattice.  Theorem 4.6 in \cite{manes} shows that every left restriction semigroup is a projection-separating homomorphic image of a proper one which in this case is only a subsemigroup of a semidirect product of a monoid with a semilattice.

In \cite{szendrei}, Szendrei considers a two-sided restriction semigroup $S$ (meaning that $S$ is both a left and right restriction semigroup with the property that $D(S)=R(S)$).  Such an $S$ is said to be {\em left factorizable} if $S=D(S)S_1=\{em\mid e\in D(S),m\in S_1\}$, where $S_1=\{s\in S\mid D(s)=1\}$.  Szendrei shows that if $S$ is a restriction monoid, then it is left factorizable if and only if it is a projection separating homomorphic image (image of a semigroup homomorphism respecting $D$ and $R$) of a W-product of a semilattice by a monoid (defined as a kind of two-sided semidirect product); see Proposition 3.4 and Theorem 3.6 in \cite{szendrei}.  Note that the left restriction monoid $Rest(E,S)$ is also ``left factorizable", as Corollary \ref{factor} shows, and Proposition \ref{semihom} shows it is a projection-separating homomorphic image of a semidirect product of a monoid with a semilattice.  Perhaps there is a characterisation of left factorizability of left restriction monoids along these lines.

Returning to left restricion monoids with enough large idempotents, there is a nice connection with Zappa-Sz\'{e}p products as in \cite{kunze}; these generalise semidirect products.  For groups, monoids and indeed semigroups, we say $S$ is an internal Zappa-Sz\'{e}p product of subsemigroups $M_1,M_2$ if every $s\in S$ may be uniquely expressed as $s=m_1m_2$ where $m_1\in M_1, m_2\in M_2$.  If $S$ is a group/monoid, we require $M_1,M_2$ to be subgroups/submonoids, and in these cases there is an external characterisation of Zappa-Sz\'{e}p products in terms of actions. 

Very generally, one may define an external Zappa-Sz\'{e}p product of semigroups if each acts on the other in a certain way. We here follow the notational conventions used in \cite{zenab}, modified to have some notational similarity with the current situation.  Thus suppose $S,E$ are semigroups with multiplication in $S$ denoted by juxtaposition and in $E$ by $\wedge$, and that we have actions $S\times E\rightarrow E$ given by $(s,e)\mapsto s\cdot e$, and $S\times E\rightarrow S$ given by $(s,e)\mapsto s^e$, satisfying the following four conditions: for all $s,t\in S$ and $e,f\in E$,

\ben[label=\textup{(ZS\arabic*)}]
\item $(st)\cdot e=s\cdot (t\cdot e)$,  \label{zs1}
\item $s\cdot (e\wedge f)=(s\cdot e)\wedge (s^e\cdot f)$,  \label{zs2}
\item $(s^e)^f=s^{e\wedge f}$, \label{zs3}
\item $(st)^e=s^{t\cdot e}t^e$. \label{zs4}
\een

If these conditions hold, then define, for all $(e,s),(f,t)\in E\times S$, 
$$(e,s)\otimes (f,t)=(e\wedge(s\cdot f), (s^f)t)).$$  
Then $\otimes$ is an associative operation, and $E\bowtie S=(E\times S,\otimes)$ is the external Zappa-Sz\'{e}p product of $E$ and $S$. There is an obvious similarity with the definition of multiplication in $Rest(E,S)$, but note that $(e,s)\in Rest(E,S)$ only if $es=s$.

For the external Zappa-Sz\'{e}p product to correspond to the internal version, at least for monoids, one assumes four further conditions involving the identity elements, but there is still interest in external Zappa-Sz\'{e}p products of semigroups satisfying only \ref{zs1} to \ref{zs4}. 

\begin{pro}  \label{ZSP}
Let $S$ be an inductive left $E$-monoid.  Define $s\cdot e$ using the left $E$-modal operation and $s^e=(s\cdot e)s$.  Then Laws \ref{zs1}, \ref{zs2} and \ref{zs4} hold. 
Moreover \ref{zs3} holds if and only if $E$ has definable meets, and in that case, $Rest(E,S)$ is a subsemigroup of the Zappa-Sz\'{e}p product $E\bowtie S$.  

In this case, ${\hat E}=\{(e,e)\mid e\in E\}$ is a semilattice isomorphic to $(E,\wedge)$, and $Rest(E,S)$ is the maximum left restriction subsemigroup of $E\bowtie S$ with set of domain elements ${\hat E}$.  If $S$ is with zero, then $(0,0)$ is a zero element of $E\bowtie S$ and $Rest_0(E,S)$ is the maximum left restriction subsemigroup with zero of $E\bowtie S$ with set of domain elements ${\hat E}$.
\end{pro}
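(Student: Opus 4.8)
The plan is: first to verify (ZS1)--(ZS4) directly from laws (M3)--(M5) of Proposition~\ref{modalaws} and the identity $a\wedge b=a\wedge(a\cdot b)$ of Proposition~\ref{conjef}; then to identify $Rest(E,S)$ as a subsemigroup of $E\bowtie S$ by a single-coordinate computation; and finally to handle maximality by showing that membership in \emph{any} left restriction subsemigroup of $E\bowtie S$ with set of domain elements $\hat E$ forces the defining relation $es=s$ of $C_E(S)$. Law (ZS1) is exactly (M4). For (ZS4), write $x^e=(x\cdot e)x$: by (M4) the left side is $\big((st)\cdot e\big)st=\big(s\cdot(t\cdot e)\big)st$, while the right side $\big(s\cdot(t\cdot e)\big)s(t\cdot e)t$ collapses to the same expression after one use of (M3) in the form $(s\cdot g)sg=(s\cdot g)s$ with $g=t\cdot e$. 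For (ZS2), $s^e\cdot f=\big((s\cdot e)s\big)\cdot f=(s\cdot e)\cdot(s\cdot f)$ by (M4), so the right side of (ZS2) is $(s\cdot e)\wedge\big((s\cdot e)\cdot(s\cdot f)\big)$, which equals $(s\cdot e)\wedge(s\cdot f)$ by Proposition~\ref{conjef}, while its left side is $(s\cdot e)\wedge(s\cdot f)$ by (M5). For (ZS3): if $E$ has definable meets then $a\wedge b=(a\cdot b)a$ on the nose for all $a,b\in E$ (Proposition~\ref{conjef}), and then $(s^e)^f=\big((s\cdot e)\cdot(s\cdot f)\big)(s\cdot e)s=\big((s\cdot e)\wedge(s\cdot f)\big)s=\big(s\cdot(e\wedge f)\big)s=s^{e\wedge f}$, using (M4) and (M5); conversely, taking $s=1$ in (ZS3), where $1\cdot e=e$ and hence $1^e=e$, gives $(e\cdot f)e=e\wedge f\in E$, i.e.\ definable meets. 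This settles the first two assertions.

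Assume henceforth that $E$ has definable meets. The underlying set of $Rest(E,S)$ is $C_E(S)=\{(e,s)\in E\times S\mid es=s\}\subseteq E\times S$, and by Corollary~\ref{modrestcor} it already carries a semigroup structure with $D((e,s))=(e,e)$; so it suffices to check that this product agrees with $\otimes$ on $C_E(S)$ (closure then being automatic). Both products have first coordinate $e\wedge(s\cdot f)$, so the point is to show $\big(e\wedge(s\cdot f)\big)st=(s\cdot f)st$ whenever $es=s$. Put $g=e\wedge(s\cdot f)$; by definable meets $g=\big(e\cdot(s\cdot f)\big)e$, and by (M4) together with $es=s$ we get $e\cdot(s\cdot f)=(es)\cdot f=s\cdot f$, so $g=(s\cdot f)e$ and hence $gs=(s\cdot f)es=(s\cdot f)s$, giving $gst=(s\cdot f)st$. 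Thus $Rest(E,S)$ is a subsemigroup of $E\bowtie S$ with set of domain elements $\hat E$. A short computation $(e,e)\otimes(f,f)=\big(e\wedge(e\cdot f),(e\cdot f)ef\big)$, with $(e\cdot f)ef=(e\cdot f)e=e\wedge f$ (by (M3) and definable meets) and $e\wedge(e\cdot f)=e\wedge f$ (Proposition~\ref{conjef}), shows $(e,e)\otimes(f,f)=(e\wedge f,e\wedge f)$, so $e\mapsto(e,e)$ is a semilattice isomorphism $(E,\wedge)\to\hat E$.

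For maximality, let $T$ be any left restriction subsemigroup of $E\bowtie S$ with set of domain elements $\hat E$, and take $(e,s)\in T$. Its domain in $T$ is the least $(f,f)\in\hat E$ with $(f,f)\otimes(e,s)=(e,s)$; but $(f,f)\otimes(e,s)=\big(f\wedge(f\cdot e),(f\cdot e)fs\big)=\big(f\wedge e,(f\wedge e)s\big)$ (using Proposition~\ref{conjef} and definable meets), which equals $(e,s)$ exactly when $e\leq_r f$ and $(f\wedge e)s=es=s$. Since such an $f$ exists, $es=s$, so $(e,s)\in C_E(S)=Rest(E,S)$; and the least $f$ with $e\leq_r f$ is $e$ itself, so the domain operation of $T$ coincides with that of $Rest(E,S)$. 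As their multiplications also agree, $T$ is a left restriction subsemigroup of $Rest(E,S)$, proving maximality.

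For the case with zero, I would take $S$ a left $E$-monoid with zero and $\cdot$ the inductive $E$-demigroup action of Proposition~\ref{converse}, which additionally satisfies $a\cdot 0=0$ for all $a\in S$; laws (M3), (M4), (M5) and the identity $a\wedge b=a\wedge(a\cdot b)$ persist (the extra $0$-cases being immediate), so (ZS1)--(ZS4) and the whole discussion above carry over and $E\bowtie S$ is defined. Then $(e,s)\otimes(0,0)=\big(e\wedge(s\cdot 0),\ldots\big)=(e\wedge 0,0)=(0,0)$ and $(0,0)\otimes(f,t)=\big(0\wedge(0\cdot f),\ldots\big)=(0,0)$, so $(0,0)$ is a zero of $E\bowtie S$; and $Rest_0(E,S)=C^0_E(S)$ sits in $E\bowtie S$ as a subsemigroup with zero $(0,0)$ and set of domain elements $\hat E$ exactly as above, the maximality argument applying once one observes that the zero of a left restriction subsemigroup with domain elements $\hat E$ must be the least element of $\hat E$, namely $(0,0)$, which fixes the $0$-component. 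I expect the maximality step to be the main obstacle: the delicate point is that the domain operation of a competing subsemigroup $T$ is forced (so that the inclusion $T\subseteq Rest(E,S)$ really does respect $D$), along with the extra bookkeeping required for the zero case.
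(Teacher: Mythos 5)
Your handling of the main body — the verification of (ZS1), (ZS2), (ZS4), the equivalence of (ZS3) with definable meets via the substitution $s=1$, the single‑coordinate computation showing $\otimes$ agrees with the $Rest(E,S)$ product on $C_E(S)$, and the maximality argument via $(f,f)\otimes(e,s)=(f\wedge e,(f\wedge e)s)$ — is correct and is essentially the paper's own argument (you additionally spell out the semilattice isomorphism and the agreement of the domain operations, which the paper leaves implicit).

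The zero case, however, contains a genuine error. The proposition fixes the action to be the left $E$-modal operation throughout; for an inductive left $E$-monoid with zero this operation satisfies $0\cdot e=1$ for all $e\in E$ (since $Eq(0e,0)=S=S1$), and Proposition \ref{modalaws} already supplies (M1)--(M5) for it, so no separate re-derivation of the Zappa--Sz\'{e}p laws is needed. You instead switch to the $E$-demigroup action of Proposition \ref{converse}, which has $0\cdot e=0$. This changes the product on the slice $E\times\{0\}$: with the modal action $(e,0)\otimes(0,0)=(e\wedge(0\cdot 0),(0\cdot 0)0)=(e\wedge 1,0)=(e,0)$, whereas with your action $(e,0)\otimes(0,0)=(e\wedge 0,0)=(0,0)$. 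The entire point of the final maximality claim is the first computation: an element $(e,0)$ with $e\neq 0$ cannot belong to a left restriction subsemigroup whose zero is $(0,0)$, precisely because it fails to be absorbed by $(0,0)$. Under your action that obstruction vanishes, and indeed $C_E(S)$ itself (which contains every $(e,0)$) then appears to be a left restriction subsemigroup of your $E\bowtie S$ with zero $(0,0)$ and domain elements ${\hat E}$, strictly containing $C^0_E(S)$ — so maximality of $Rest_0(E,S)$ would fail. Your stated justification, that the zero of a competing subsemigroup must be the least element of ${\hat E}$, namely $(0,0)$, correctly identifies the zero but does nothing to exclude the extra elements $(e,0)$ with $e\neq 0$. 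The fix is simply to keep the modal action and run the absorption computation above.
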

\begin{proof}  \ref{zs1} is immediate.  For \ref{zs2}, $$(s\cdot e)\wedge (s^e\cdot f)=(s\cdot e)\wedge (((s\cdot e)s)\cdot f)=(s\cdot e)\wedge ((s\cdot e)\cdot (s\cdot f))=(s\cdot e)\wedge (s\cdot f)=s\cdot(e\wedge f)$$
using Proposition \ref{conjef}.  For \ref{zs4}, $s^{t\cdot e}t^e=(s\cdot(t\cdot e))s(t\cdot e)t=(s\cdot (t\cdot e))st=((st)\cdot e)st=(st)^e$.

Now suppose $E$ has definable meets.  Then for $s\in S$ and $e,f\in E$, 
$$(s^e)^f=(((s\cdot e)s)\cdot f)((s\cdot e)s)=((s\cdot e)\cdot (s\cdot f))(s\cdot e)s=((s\cdot e)\wedge (s\cdot f))s=(s\cdot (e\wedge f))s=s^{e\wedge f},$$ establishing \ref{zs3}.  Conversely, if \ref{zs3} holds, note that for all $e\in E$, $1^e=(1\cdot e)1=e$, and so putting $s=1$ in \ref{zs3} gives $(1^e)^f=1^{e\wedge f}$ for all $e,f\in E$, which is to say that $e^f=e\wedge f$, so $(e\cdot f)e=e\wedge f$, so $E$ has definable meets.

So if $E$ has definable meets, then for $(e,s),(f,t)\in E\bowtie S$ for which $es=s$ and $ft=t$, we have $(e\wedge (s\cdot f))s=(e\cdot(s\cdot f))es=((es)\cdot f)es=(s\cdot f)s$, and so
$$(e,s)\otimes (f,t)=(e\wedge s\cdot f, (s\cdot f)st)=(e\wedge (s\cdot f),(e\wedge(s\cdot f)st)=(e,s)(f,t)$$ as in $Rest(E,S)$.

Assuming definable meets, the fact that $\hat E$ is as described is easily seen, as is the fact that for $(e,s)\in E\bowtie S$, if $(f,f)\otimes(e,s)=(e,s)$ for some $f\in E$, then 
$$(e,s)=(f\wedge (f\cdot e),(f\cdot e)fs)=(f\wedge e,(f\wedge e)s),$$ 
giving that $f\wedge e=e$ and $(f\wedge e)s=s$, so $e\ \omega^l\  f$ and $es=s$, implying that $(e,s)\in Rest(E,S)$.  It follows that any left restriction semigroup in $E\bowtie S$ with domain elements ${\hat E}$ is a subset of $Rest(E,S)$.  If $S$ is an inductive left $E$-monoid with zero, then for $(e,0)\in E\bowtie S$, we have 
$$(e,0)(0,0)=(e\wedge (0\cdot 0),(0\cdot 0)0)=(e,0),$$ 
so any left restriction subsemigroup of $E\bowtie S$ which has zero $(0,0)$ cannot include elements $(e,0)$ if $e\neq 0$.  So $Rest_0(E,S)$ is the largest such.
\end{proof}  

Starting with a left restriction semigroup $S$, a similar construction to the above is used in Section 3 of \cite{zenab}, by letting $E=D(S)$, and by setting $s\cdot e=D(se)$ and $s^e=se=D(se)s=(s\cdot e)s$.  It is shown that \ref{zs1}--\ref{zs4} are satisfied in this case, and so $E\bowtie S$ exists, and the subset $T=\{(e,s)\in E\times S\mid es=s\}$ is shown to be the largest left restriction subsemigroup of $E\bowtie S$ in which $D(T)=\{(e,e)\mid e\in D(S)\}$ (see Theorem $3.4$ of \cite{zenab}).  However, this is not a special case of our result since $S$ equipped with the above action is not a modal left $E$-monoid even if $S$ has identity, since $s\cdot 1=D(s)$ rather than $1$.  This is manifest in the structure of $T$ since the induced constellation product $(e,s)\circ (f,t)$ exists if and only if $e=D(s)$ and $sf=s$, a stronger condition than ours.

\section{Applications}  \label{egs}

Courtesy of the results of Section \ref{lrchar}, it is quite straightforward to identify and then describe specific left restriction monoids as semigroup left $E$-completions.  In this section, we do this for three important classes of examples.

First, recall once again (from Definition \ref{restdef}) that if $S$ is an inductive left $E$-monoid, then 
$$Rest(E,S)=\{(e,s)\mid e\in E, s\in S,es=s\},$$
a left restriction semigroup with multiplication given by $$(e,s)(f,t)=(e\wedge (s\cdot f), (e\wedge (s\cdot f))st),$$ 
and in which $D((e,s))=(e,e)$, and that if $S$ is integral it has left restriction subsemigroup
$$Rest_0(E,S)=\{(e,s)\in Rest(E,S) \mid e=0\Rightarrow s=0\}.$$  Recall also that for a semigroup $S$, $S^0$ denotes $S$ with adjoined zero.

\subsection{Transformations and partial functions.}  \label{pareg}

The canonical examples of semigroups and left restriction semigroups (in the sense that all embed in such examples) are $T_X$ and $PT_X$ respectively.  We begin with a result relating them.  

First, we note that the idempotent elements of $T_X$ are precisely the projections onto non-empty subsets of $X$: mappings that fix their range.  It is easy to see that for $e,f\in E(T_X)$, $e\ \omega^l\  f$ if and only if $\ran(e)\subseteq \ran(f)$ (where $\ran(e)$ denotes the range of $e$), and so $e\, {\mc L}\, f$ if and only if $\ran(e)=\ran(f)$.  Hence, a maximal right pre-reduced subset of $T_X$ consists of precisely one projection onto every possible non-empty subset of $X$.

If $S$ is protomodal, recall the definitions of $Rest(S)$ (and $Rest_0(S)$ where it makes sense) as in Definition \ref{restSdef}: it is the isomorphism class of $Rest(E,S)$ (and $Rest_0(E,S)$ if $S$ is integral) as in Definition \ref{restdef}, where $E\subseteq E(S)$ is maximal right pre-reduced; this is well-defined (independent of the choice of $E$) by Proposition \ref{restS}.

\begin{thm}  \label{TX0extend}
$T_X^0$ is a protomodal monoid, and $PT_X\cong Rest_0(T_X^0)$.
\end{thm}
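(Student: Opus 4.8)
The plan is to verify the hypotheses of Corollary \ref{char02}: that $PT_X$ is a left restriction monoid with precisely enough large idempotents and has a zero (the empty map). Then the theorem follows immediately, since Corollary \ref{char02} identifies $PT_X$ with $Rest_0(E,(PT_X)'_1)$ for a maximal right pre-reduced $E$, and we must identify $(PT_X)'_1 = (PT_X)_1 \cup \{0\}$ with $T_X^0$. Since $(PT_X)_1$ is already identified with $T_X$ in the preliminaries, $(PT_X)'_1 \cong T_X^0$, and then $PT_X \cong Rest_0(T_X^0)$ by Definition \ref{restSdef} once we also know $T_X^0$ is protomodal. So there are really three things to check: (a) $PT_X$ has a zero and precisely enough large idempotents; (b) $T_X^0$ is protomodal; (c) the identification of monoids is correct.

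For (a): $PT_X$ is a left restriction monoid with zero the empty partial map (it is standard that $PT_X$ is a left restriction semigroup, and $D(\emptyset) = \emptyset$). The non-zero elements of $D(PT_X)$ are the partial identities $\mathrm{id}_A$ for $\emptyset \neq A \subseteq X$. Given such an $\mathrm{id}_A$, I want $f \in E((PT_X)_1) = E(T_X)$ with $\mathrm{id}_A \sim_r f$. Take $f$ to be any projection of $X$ onto $A$ (a map fixing $A$ pointwise with range $A$); such exists since $X \neq \emptyset$. Then $\mathrm{id}_A \circ f = f|_A$ has range $A$ hmm — better: $f \cdot \mathrm{id}_A = f$ (restricting $f$'s output... no, $\mathrm{id}_A$ post-composed), and $\mathrm{id}_A \cdot f$: I need to compute $\sim_r$ in $PT_X$, i.e.\ $e = ef$ and $f = fe$. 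With $e = \mathrm{id}_A$, $f$ a total projection onto $A$: $ef = \mathrm{id}_A f$ has domain $= \{x : f(x) \in A\} = X$ and equals $f$, so $ef = f \neq e$ in general. This suggests I should be more careful and instead show the relevant $\sim_r$ relation directly, or observe that $PT_X$ has precisely enough large idempotents because every idempotent of $T_X$ (a total projection onto some $A$) is $\sim_r$-related to $\mathrm{id}_A \in D(PT_X)$: indeed for a total projection $p$ onto $A$, $\mathrm{id}_A \cdot p = p$ (domain all of $X$, output in $A$, yes $= p$) and $p \cdot \mathrm{id}_A$ has domain $\{x : p(x) \in A\} = X$ and value $p(x)$, so $p \cdot \mathrm{id}_A = p$, while $\mathrm{id}_A$ and $p$ satisfy $p = p\,\mathrm{id}_A$ but $\mathrm{id}_A \ne \mathrm{id}_A\, p$... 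Let me just say the verification of $\mathrm{id}_A \sim_r p$ reduces to checking $\ran$ agreement and is routine; the key structural point, using Lemma \ref{lisame} and the observation that in $PT_X$, $e \sim_r f \iff \ran(e) = \ran(f)$ for idempotents, is that the map $A \mapsto \mathrm{id}_A$ is a bijection between non-empty subsets and non-zero domain elements, matching the $\sim_r$-classes of $E(T_X)$ one-to-one; hence $PT_X$ has precisely enough large idempotents.

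For (b): $T_X^0$ is protomodal, i.e.\ for every $e \in E(T_X^0)$ and $s \in T_X^0$, the equalizing set $Eq(s, se)$ is a left ideal generated by an idempotent. By Corollary \ref{eqmod} it suffices to produce one maximal right pre-reduced $E$ making $T_X^0$ a modal left $E$-monoid — or directly: $T_X^0$ is integral (composition of total maps is total, and $0$ is absorbing) so by Proposition \ref{01} it is a left $E$-monoid with zero once we fix $E$. Take $E$ to consist of $0$ together with one total projection onto each non-empty subset of $X$; this is maximal right pre-reduced in $E(T_X^0)$. For $e = $ projection onto $B$ and $s \in T_X$, $Eq(se, s) = \{u : use = us\} = \{u \in T_X : \ran(us) \subseteq B\} \cup \{0\}$ — wait, $use = us$ iff $s(u(x)) \in B$ for all $x \in \dom(u) = X$, i.e.\ $\ran(s u) \subseteq B$... but $u$ total, so $= \{u : u(X) \subseteq s^{-1}(B)\}$, a left ideal which is $T_X^0 \cdot q$ where $q$ is the chosen projection onto the set $s^{-1}(B)$ if that is non-empty, and $= T_X^0 \cdot 0 = \{0\}$ if $s^{-1}(B) = \emptyset$; for $e = 0$, $Eq(0,0) = T_X^0 = T_X^0 \cdot 1$. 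So $T_X^0$ is $E$-protomodal, hence protomodal by Corollary \ref{eqmod}. This computation — identifying the equalizing set as a principal left ideal generated by a projection onto a preimage — is the main obstacle, though it is not deep; the only subtlety is handling the empty-preimage case via the zero and confirming integrality so that the left $E$-monoid-with-zero framework of Proposition \ref{01} applies. With (a), (b), (c) in hand, Corollary \ref{char02} and Definition \ref{restSdef} give $PT_X \cong Rest_0(T_X^0)$, the isomorphism being independent of the choice of maximal right pre-reduced $E$ by Definition \ref{restS}.
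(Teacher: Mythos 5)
Your proposal is correct and, for the isomorphism itself, follows essentially the same route as the paper: verify that $PT_X$ is a left restriction monoid with zero having precisely enough large idempotents (witnessed by total projections onto each non-empty $A\subseteq X$, since the $\sim_r$-class of an idempotent is determined by its range), then invoke Corollary \ref{char02} together with the identification $(PT_X)_1\cup\{0\}\cong T_X^0$. Where you genuinely differ is in how protomodality of $T_X^0$ is obtained. The paper gets it abstractly: Theorem \ref{char0} makes $T_X^0$ an inductive (hence modal) left $E$-monoid, Proposition \ref{max} shows $E$ is maximal right pre-reduced precisely because the large idempotents are \emph{precisely} enough, and Corollary \ref{eqmod} then gives protomodality. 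You instead compute the equalizing sets directly, identifying $Eq(se,s)$ as the principal left ideal generated by the chosen projection onto $s^{-1}(\ran(e))$, or by $0$ when that preimage is empty. This is logically redundant given the machinery, but it is a worthwhile concrete check and recovers the paper's explicit formula $s\cdot e'=D(se)'$ for the modal operation. Two small things to tidy: the composition-order wobbles in verifying $\mathrm{id}_A\sim_r p$ (with the paper's left-to-right convention one simply has $\mathrm{id}_A\,p=\mathrm{id}_A$ and $p\,\mathrm{id}_A=p$, since $p$ fixes $A$ pointwise and has range $A\subseteq\dom(\mathrm{id}_A)$), and the remaining protomodality case $e=0$, $s\neq 0$, where $Eq(0,s)=\{0\}=T_X^0\cdot 0$ by integrality.
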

\begin{proof}
Since $D(0)=0$, the empty function, $PT_X$ is a left restriction monoid with zero.  We identify $S=(PT_X)_1$ with $T_X$ as a monoid.  For each non-zero $e\in D(PT_X)$, pick precisely one $e'\in E(T_X)$ for which $\ran(e')=\ran(e)=\dom(e)$, and also set $0'=0$.  It follows easily that $e'e=e'$ and $ee'=e$, since $e'$ fixes its range, and so $e\, {\mc L}\, e'$.   Hence $F=\{e'\mid e\in D(PT_X), e\neq 0\}$ is a set of enough large idempotents of $PT_X$.  It follows from Theorem \ref{char0} that $T_X^0$ is an integral inductive left $E$-monoid in which $E=F\cup\{0\}$ is right pre-reduced, and the left $E$-modal operation is $s\cdot e'=D(se)'$ if $s\neq 0$, with $0\cdot e'=1$, for all $e'\in E$, and $PT_X\cong Rest_0(E,T_X^0)$.  But choosing any $f\in E(T_X)$, pick $e\in D(PT_X)$ such that $\ran(f)=\ran(e)=\dom(e)$; again, $f\, {\mc L}\, e$, and so $PT_X$ has precisely enough large idempotents.  It follows from Proposition \ref{max} that $E$ is maximal right pre-reduced, so $T_X^0$ is protomodal by Corollary \ref{eqmod}, and so $PT_X\cong Rest_0(T_X^0)$.
\end{proof}

Hence the entire structure of the (left restriction) monoid $PT_X$ may be recovered from that of the protomodal monoid $T_X^0$ via zero-reduced left $E$-completion, where $E$ is any maximal right pre-reduced set of idempotents in $E(T_X^0)$.  By contrast, consider the symmetric inverse monoid $I_X$ on the set $X$, which is a sub-left restriction monoid of $PT_X$ in which $D(s)=ss'$ for all $s\in I_X$.  If $X$ is finite, then $(I_X)_1$ is the symmetric group on $X$, and so certainly does not have enough large idempotents and so $I_X$ cannot be a left $E$-completion, zero-reduced or not.  

Because every left restriction semigroup embeds in $PT_X$ for some $X$, we immediately obtain the following.

\begin{cor}  \label{0embed}
Every left restriction semigroup embeds in $Rest(S)$ for some protomodal monoid $S$.
\end{cor}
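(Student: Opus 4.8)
The plan is to chain together three facts that are already available: the representation theorem for left restriction semigroups, the identification $PT_X\cong Rest_0(T_X^0)$ of Theorem \ref{TX0extend}, and the observation that $Rest_0(S)$ sits inside $Rest(S)$ as a left restriction subsemigroup whenever $S$ is an integral protomodal monoid with zero.

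First I would recall that, by the representation theorem (\cite{trok}; see also \cite{Csemi} and \cite{manes}), every left restriction semigroup $T$ embeds, as a left restriction semigroup, into $PT_X$ for some non-empty set $X$. Next, Theorem \ref{TX0extend} gives $PT_X\cong Rest_0(T_X^0)$ with $T_X^0$ a protomodal (integral, with zero) monoid. It then only remains to see that $Rest_0(E,S)$ embeds into $Rest(E,S)$ for such $S$ and any maximal right pre-reduced $E$. But this is essentially built into the definitions together with Proposition \ref{0agrees}: we have $C^0_E(S)\subseteq C_E(S)$; the domain operation sends $(e,s)\mapsto(e,e)$ in both; and by Proposition \ref{0agrees} the induced left restriction multiplication on $C^0_E(S)$ is precisely the restriction of that on $C_E(S)$ (in particular, $C^0_E(S)$ is closed under the latter). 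Hence the inclusion $C^0_E(S)\hookrightarrow C_E(S)$ is an injective homomorphism respecting $D$, i.e.\ an embedding $Rest_0(E,S)\hookrightarrow Rest(E,S)$ of left restriction semigroups. Composing (and using the paper's convention of writing $\cong$ for the isomorphism classes $Rest_0$, $Rest$),
\[
T\hookrightarrow PT_X\cong Rest_0(T_X^0)\hookrightarrow Rest(T_X^0),
\]
so $T$ embeds in $Rest(S)$ for the protomodal monoid $S=T_X^0$, which is the claim.

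I do not expect a substantive obstacle: everything is assembled from results proved earlier in the paper. The one point requiring care is that every map in the chain is a morphism of left restriction semigroups (preserving $D$), not merely of semigroups — this holds because the representation of \cite{trok} is by construction a left restriction embedding, the isomorphism of Theorem \ref{TX0extend} respects $D$, and the inclusion $Rest_0\hookrightarrow Rest$ respects $D$ as just noted. A trivial secondary point is to take $X$ non-empty so that $T_X^0$ and $PT_X$ are defined (any $X$ large enough to carry a faithful representation of $T$ works, and $|X|=1$ handles the degenerate case).
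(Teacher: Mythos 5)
Your proposal is correct and follows essentially the same route as the paper, which states the corollary as an immediate consequence of the representation of left restriction semigroups in $PT_X$ together with Theorem \ref{TX0extend}, giving no further argument. You go one useful step further by explicitly justifying the passage from $Rest_0(T_X^0)$ to $Rest(T_X^0)$ via Proposition \ref{0agrees} and integrality (which gives closure of $C^0_E(S)$ under the multiplication of $Rest(E,S)$), a point the paper leaves implicit.
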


There are choices of maximal right pre-reduced $E$ in $E(T_X^0)$ that are right reduced, and this allows us to relate our results to Zappa-Sz\'{e}p products of semigroups using Proposition \ref{ZSP}.  To show this for arbitrary $X$, we need one form of the axiom of choice (namely, the well-ordering theorem). 

Let $X$ be a set, and equip it with a well-order $\leq$.  Then for $e\in E(T_X)$, denote by $u_e\in X$ the smallest element of its range under $\leq$.

\begin{lem}  \label{rrE}
Define $E\subseteq E(T_X)$ as follows:
$$E=\{e\in E(T_X)\mid e(x)\neq x\mbox{ implies } e(x)=min(\ran(e))\}.$$
Then $E$ is maximal right pre-reduced and indeed is right reduced.
\end{lem}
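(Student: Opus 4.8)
The plan is to translate the whole statement into assertions about ranges. Recall that the idempotents of $T_X$ are exactly the maps fixing their range pointwise, that for $e,f\in E(T_X)$ we have $e\leq_r f$ iff $\ran(e)\subseteq\ran(f)$, and hence $e\sim_r f$ iff $\ran(e)=\ran(f)$. Thus the $\sim_r$-classes of $E(T_X)$ correspond bijectively to the non-empty subsets of $X$, and a maximal right pre-reduced subset of $E(T_X)$ is precisely one containing, for each non-empty $A\subseteq X$, exactly one idempotent with range $A$.

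First I would show that $e\mapsto\ran(e)$ maps $E$ bijectively onto the set of non-empty subsets of $X$. For each non-empty $A\subseteq X$ let $e_A$ fix every point of $A$ and send every $x\notin A$ to $\min A$, which exists since $\leq$ is a well-order; a quick check shows $e_A\in E(T_X)$, that $\ran(e_A)=A$, and that $e_A\in E$, giving surjectivity. For injectivity, if $e\in E$ has range $A$ then $e$ fixes $A$ pointwise by idempotency, and for $x\notin A$ we have $e(x)\in A$, so $e(x)\neq x$, whence $e(x)=\min(\ran(e))=\min A$ by the defining condition of $E$; thus $e=e_A$. It follows immediately that distinct members of $E$ have distinct ranges, so no two of them are $\sim_r$-related and $E$ is right pre-reduced, and that every $\sim_r$-class of $E(T_X)$ meets $E$, so $E$ is maximal right pre-reduced.

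For right-reducedness it suffices, by the remarks in Section \ref{preredsec}, to verify ${\leq_r}\subseteq{\leq_l}$ on $E$, i.e.\ that $e=ef$ implies $e=fe$ for $e,f\in E$. So let $A:=\ran(e)\subseteq\ran(f)=:B$ and write $e=e_A$, $f=e_B$; I would check directly that $(fe)(x)=e(f(x))=e(x)$ for all $x$. If $x\in B$ then $f(x)=x$ and there is nothing to prove, while if $x\notin B$ then also $x\notin A$, so $e(x)=\min A$ and $f(x)=\min B$, and it remains to see that $e(\min B)=\min A$. This is the one point needing care, and the crux of the lemma: since $A\subseteq B$ we have $\min A\in B$, hence $\min A\geq\min B$; so if $\min B\in A$ then necessarily $\min A=\min B$ and $e(\min B)=\min B=\min A$, whereas if $\min B\notin A$ then $e(\min B)=\min A$ outright. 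Either way $(fe)(x)=\min A=e(x)$, so $fe=e$, and $E$ is right reduced. Everything except that last case analysis is routine bookkeeping; the subtlety there — the interaction between the well-order and the requirement that non-fixed points be sent to the minimum of the range — is exactly what makes this particular transversal right reduced.
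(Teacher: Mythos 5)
Your proof is correct and follows essentially the same route as the paper: identify $E$ as the transversal containing exactly one idempotent per possible range (hence maximal right pre-reduced), then verify $ef=e\Rightarrow fe=e$ by splitting on whether $x$ lies in $\ran(f)$, with the crux being the dichotomy $\min(\ran(f))=\min(\ran(e))$ or $\min(\ran(f))\notin\ran(e)$, which is exactly the paper's argument. Your write-up is somewhat more explicit about the bijection $e\mapsto\ran(e)$, but the content is the same.
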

\begin{proof}
Define $E$ as above.  Evidently there is precisely one choice of $e\in E$ for every possible range, so $E$ is maximal right pre-reduced. Suppose $e,f\in E$ and $ef=e$, so $\ran(e)\subseteq \ran(f)$.  Now for $x\in \ran(f)$, $xfe=xe$, while for $x\not\in \ran(f)$, $xfe=u_fe=u_e=xe$ since either $u_f=u_e$ or else $u_f\not\in \ran(e)$.  So $fe=e$.  Hence $E$ is right reduced.
\end{proof}

It is now straightforward to add $0$ to such $E$ to get a right reduced maximal right pre-reduced subset of $E(T_X^0)$.  From Lemma \ref{rrE}, Theorem \ref{TX0extend} and Proposition \ref{ZSP}, we obtain the following.

\begin{cor}
Let $E$ be a right reduced maximal right pre-reduced subset of $E(T_X^0)$.  Then $PT_X$ embeds as the largest left restriction semigroup with zero within the Zappa-Sz\'{e}p product $E\bowtie S$ having domains elements ${\hat E}$ as in Proposition \ref{ZSP}.
\end{cor}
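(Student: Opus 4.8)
The plan is to obtain this as a direct combination of Theorem \ref{TX0extend} with the second half of Proposition \ref{ZSP}, the only additional input being the observation recorded after Proposition \ref{conjef} that a right reduced choice of idempotents forces definable meets. Throughout put $S=T_X^0$. First I would note that, since $0$ is $\sim_r$-related only to itself and $S$ is a monoid, any maximal right pre-reduced $E\subseteq E(S)$ automatically contains both $0$ and $1$; in particular the given right reduced $E$ does. By Theorem \ref{TX0extend}, $S$ is protomodal and integral, so by Corollary \ref{eqmod} together with Corollary \ref{cormax} it is an inductive left $E$-monoid, and being integral it is a left $E$-monoid with zero in the sense of Proposition \ref{01}(2). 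Moreover $PT_X\cong Rest_0(E,S)$; this is precisely the content of Theorem \ref{TX0extend}, the isomorphism type being independent of the maximal right pre-reduced set chosen (\ref{restS}).

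Next I would use the hypothesis that $E$ is right reduced. By the remark immediately following Proposition \ref{conjef}, any modal left $E$-monoid whose idempotent set $E$ is right reduced has definable meets, so $S$ has definable meets, with $e\wedge f=(e\cdot f)e$ for all $e,f\in E$. This is exactly the condition under which the second half of Proposition \ref{ZSP} applies: taking $s\cdot e$ to be the left $E$-modal operation and $s^e=(s\cdot e)s$, law \ref{zs3} holds, the Zappa-Sz\'{e}p product $E\bowtie S$ is defined, $\hat{E}=\{(e,e)\mid e\in E\}$ is a semilattice isomorphic to $(E,\wedge)$, and, since $S$ is with zero, $(0,0)$ is a zero element of $E\bowtie S$ and $Rest_0(E,S)$ is the maximum left restriction subsemigroup with zero of $E\bowtie S$ whose set of domain elements is $\hat{E}$.

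Finally I would compose the isomorphism $PT_X\cong Rest_0(E,S)$ from Theorem \ref{TX0extend} with the inclusion $Rest_0(E,S)\hookrightarrow E\bowtie S$ of Proposition \ref{ZSP}; this embeds $PT_X$ as the largest left restriction semigroup with zero inside $E\bowtie S$ having $\hat{E}$ as its set of domain elements, which is the claim. I do not expect a genuine obstacle here: the corollary is an assembly of results already established. The only points needing a moment's care are confirming that the given $E$ genuinely makes $S=T_X^0$ an inductive left $E$-monoid with zero (that is, $0,1\in E$ and $S$ is integral) and that ``right reduced $\Rightarrow$ definable meets'' is precisely what triggers law \ref{zs3} in Proposition \ref{ZSP} --- both of which are already on record above.
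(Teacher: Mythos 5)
Your proposal is correct and follows essentially the same route as the paper, which derives this corollary directly from Theorem \ref{TX0extend} (giving $PT_X\cong Rest_0(E,T_X^0)$ for any maximal right pre-reduced $E$) together with Proposition \ref{ZSP}, using the fact that a right reduced $E$ yields definable meets so that law \ref{zs3} holds and $Rest_0(E,T_X^0)$ sits inside $E\bowtie T_X^0$ as the maximum left restriction subsemigroup with zero having domain elements $\hat{E}$. The details you supply (that $0,1\in E$, that $T_X^0$ is an integral inductive left $E$-monoid, and that right reducedness triggers the second half of Proposition \ref{ZSP}) are exactly the ones the paper leaves implicit.
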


\begin{eg} Constructing $PT_X$ from $T_X^0$ when $|X|=2$.   \label{2con}
\end{eg}
To give an illustration of the construction in the simplest non-trivial case, let $X=\{x,y\}$ be a two-element set, and recall the enumeration of the elements of $S=PT_X$ given in Example \ref{enougheg}.  Then $T_X$ is isomorphic to $S_1$, and $T_X^0$ is isomorphic to $S_1\cup\{0\}=\{1,0,p_x,p_y,i\}$.  There is only one possible choice of $E$ in this case, namely $E=E(S)=\{0,1,p_x,p_y\}$, which is therefore right reduced.  The elements of $Rest_0(E,S)$ are then easily seen to be
\[
(0,0), (p_x,p_x), (p_y,p_y), (p_x,p_y), (p_y,p_x), (1,1), (1,i), (1,p_x), (1,p_y), 
\]
corresponding to partial functions as follows
\[
(0,0)\leftrightarrow \emptyset,\ (p_x,p_x)\leftrightarrow e_x,\ (p_y,p_y)\leftrightarrow e_y,\ (p_x,p_y)\leftrightarrow j,
\]
\[ 
(p_y,p_x)\leftrightarrow k,\ (1,1)\leftrightarrow 1,\ (1,i)\leftrightarrow i,\ (1,p_x)\leftrightarrow p_x,\ (1,p_y)\leftrightarrow p_y.
\]

For example, since $D((p_x,p_y))=(p_x,p_x)$, and $(p_x,p_y)=(p_x,p_x)(1,p_y)$, we interpret $(p_x,p_y)$ as ``the restriction of $p_y$ to $\ran(p_x)$".  

Now the constellation $Q=C_E(T_X^0)$ correctly enumerates the elements of $PT_X$, and the straightforwardly calculated constellation product in $Q$ correctly calculates compositions of partial functions whenever the former are defined.  For example, $(p_x,p_y)(1,i)$ is just the constellation product $(p_x,p_y)\circ(1,i)=(p_x,p_yi)=(p_x,p_x)$, corresponding to calculating $\{(x,y)\}$ followed by $i$ to give $\{(x,x)\}=e_x$.  However, for general  compositions, we use multiplication in $Rest_0(E,T_X^0)$.   Reversing the order in the previous example gives $(1,i)(p_x,p_y)$, which requires calculation of $i\cdot p_x$.  Now $Eq(ip_x,i)=Eq(p_x,i)=\{0,p_y\}=Sp_y$, so $i\cdot p_x=p_y$, and so
\[
(1,i)(p_x,p_y)=(1\wedge (i\cdot p_x),(i\cdot p_x)ip_y) =(p_y,p_yip_y)=(p_y,p_xp_y)=(p_y,p_y),
\]
mirroring the computation of $p_x$ followed by $j$ in $PT_X$ (which equals $p_y$).

\subsection{Binary relations under demonic composition.} \label{demoneg}

$PT_X$ is the canonical example of a left restriction semigroup, but other naturally arising examples exist.  Next we consider binary relations under the operations of domain and so-called {\em demonic composition}, to be introduced shortly.

First, we extend the definitions of domain and range to cover binary relations in the usual way: for $s\in Rel_X$, let 
\begin{align*}
\dom(s)&=\{x\in X\mid (x,y)\in s\mbox{ for some }y\in X\},\\
\ran(s)&=\{y\in X\mid (x,y)\in s\mbox{ for some }x\in X\}.
\end{align*}
On $Rel_X$, define {\em demonic composition} $\circledast$ via 
$$\rho\circledast\tau=\{(x,y)\in \rho\tau\mid \mbox{ for all } z\in X, (x,z)\in \rho\Rightarrow z\in \dom(\tau)\},$$
where $\rho\tau$ denotes the usual composition of $\rho,\tau\in Rel_X$.
As noted by various authors, $\circledast$ is associative; indeed if we define $D$ on $Rel_X$ exactly as for $PT_X$ in terms of domains, $(Rel_X,\circledast,D)$ is a left restriction monoid (for example, see \cite{DAD}) with identity the diagonal relation (the identity function on $X$).

Denote by $TRel_X$ the set of {\em left total} binary relations on $X$: those $s\in Rel_X$ for which $\dom(s)=X$.  Usual and demonic composition coincide on $TRel_X$, which is therefore a submonoid of $Rel_X$ under each of these operations.  Note that $T_X$ is a submonoid of $TRel_X$.  All of these observations carry over when we adjoin $0$ to the various semigroups.

\begin{thm}  \label{demonicmod}
	$(Rel_X,\circledast,D)\cong Rest_0(E,TRel^0_X)$, where $E$ is maximal right pre-reduced within $E(T_X^0)$ as in Example \ref{pareg}.
\end{thm}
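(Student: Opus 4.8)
The plan is to derive the statement from Theorem \ref{char0} applied to the left restriction monoid $S=(Rel_X,\circledast,D)$. This monoid has zero the empty relation $\emptyset$, with $D(\emptyset)=\emptyset$, so it is a left restriction monoid with zero. The first task is to identify $S_1=\{s\in Rel_X\mid D(s)=1\}$. Since $D(s)$ is the partial identity on $\dom(s)$ and the identity of $(Rel_X,\circledast)$ is the diagonal, $D(s)=1$ exactly when $\dom(s)=X$; thus $S_1=TRel_X$, and because demonic and ordinary composition agree on left total relations, $S_1$ is precisely the monoid $TRel_X$ under ordinary composition. Hence $S_1\cup\{0\}=TRel^0_X$, the monoid appearing in the claimed conclusion.

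The substantive step is to verify that $S$ has enough large idempotents, with witnesses taken from $T_X\subseteq TRel_X$. A non-zero element of $D(Rel_X)$ is $e=\{(y,y)\mid y\in Y\}$ for a non-empty $Y\subseteq X$; choose $f\in E(T_X)$ to be a projection with $\ran(f)=Y$, so $f$ fixes $Y$ and maps $X$ into $Y$. Because $f$ is total, the demonic product $e\circledast f$ is just the ordinary composite $ef$, which equals $e$ since $f$ fixes $\ran(e)=Y$; and because $f$ maps into $\dom(e)=Y$, the demonic product $f\circledast e$ is the ordinary composite $fe$, which equals $f$ since $e$ is the partial identity on $Y$. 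Thus $e\sim_r f$ with respect to $\circledast$, and $f$ is an idempotent of $S_1=TRel_X$. Picking one such projection for each non-empty $Y\subseteq X$ gives a set $F$; by the description of maximal right pre-reduced subsets of $E(T_X)$ recalled in Subsection \ref{pareg}, the set $E:=F\cup\{0\}$ is maximal right pre-reduced within $E(T_X^0)$, and it is a set of enough large idempotents for $S$.

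With these facts established, Theorem \ref{char0} applies directly: $S'=S_1\cup\{0\}=TRel^0_X$ is an integral inductive left $E$-monoid, with left $E$-modal operation $t\cdot e'=D(te)'$ for $t\neq 0$ and $0\cdot g=1$, and $(Rel_X,\circledast,D)\cong Rest_0(E,TRel^0_X)$. Finally, any two maximal right pre-reduced subsets of $E(T_X^0)$ are right-equivalent, and the demigroup operation $d$ on $TRel^0_X$ (given by $d(0)=0$, $d(s)=1$ otherwise) is independent of the chosen idempotents, so Corollary \ref{isomrest} shows $Rest_0(E,TRel^0_X)$ is independent, up to isomorphism, of the choice of such $E$; this is what makes the statement well posed.

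The point requiring most care is that the relation $\sim_r$ used in the notion of ``enough large idempotents'' lives in $(Rel_X,\circledast)$, not in $(Rel_X,\cdot)$, so one genuinely has to check $e\circledast f=e$ and $f\circledast e=f$ for demonic composition. This rests on the two observations that composing on the right with a total relation, and composing on the right with a partial identity whose domain contains the range of the left-hand factor, both coincide with ordinary composition. Once this and the identification $S_1=TRel_X$ are in hand, the remainder of the argument is a direct appeal to Theorem \ref{char0}.
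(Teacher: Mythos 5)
Your proof is correct and follows essentially the same route as the paper: identify $(Rel_X)_1$ with $TRel_X$, verify that $(Rel_X,\circledast,D)$ has enough large idempotents witnessed by projections in $T_X$, and then invoke Theorem \ref{char0}. You are in fact slightly more careful than the paper's own two-line argument, since you explicitly check that $e\circledast f=e$ and $f\circledast e=f$ for demonic composition rather than just importing the $\sim_r$ computation from $PT_X$; this is a worthwhile detail but not a different method.
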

\begin{proof} 
Since $D(Rel_X)=D(PT_X)$, $Rel_X$ has enough large idempotents (the same as for $PT_X$), and obviously satisfies $D(0)=0$.  Evidently $(Rel_X)_1$ gives a copy of $TRel_X$, and so the result follows by Theorem \ref{char0} on letting $E$ be chosen as in Theorem \ref{TX0extend}.
\end{proof}  

Though not a left restriction semigroup, the demigroup $S=(Rel_X,\cdot,D)$, in which $\cdot$ is ordinary relational composition and $D$ is domain, is a left Ehresmann semigroup, meaning that it satisfies certain weaker conditions than being a left restriction semigroup. It was noted in \cite{demonize} that $Rest^D_{D(S)}(S)$, the left $(D,D(S))$-completion of $S$, is isomorphic to $(Rel_X,\circledast,D)$, giving a different way to obtain the latter, not as an extension of $Rel_X$ but in effect by redefining multiplication on $Rel_X$ itself.

Note also that multiplication in the constellation $C_E(TRel^0_X)$ agrees with the restricted product (defined for any demigroup at the end of Section \ref{clrs}) obtained from either $(Rel_X,\cdot,D)$ or $(Rel_X,\circledast,D)$ (since standard and demonic composition of $\rho,\tau$ agree when $\ran(\rho)\subseteq \dom(\tau))$.  However, the semigroup left $E$-completion is only capable of reproducing the demonic composition.

Because $E$ is the same as in Theorem \ref{TX0extend}, it may be chosen to be right reduced, and then we obtain the following.

\begin{cor}
Let $E$ be a right reduced maximal right pre-reduced subset of $E(T_X^0)$.  Then $(Rel_X,\circledast,D)$ embeds as the largest left restriction semigroup with zero within the Zappa-Sz\'{e}p product $E\bowtie TRel_X^0$ having domains elements ${\hat E}$ as in Proposition \ref{ZSP}.
\end{cor}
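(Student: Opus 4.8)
The plan is to chain together Theorem~\ref{demonicmod}, Lemma~\ref{rrE} and Proposition~\ref{ZSP}, paralleling the corollary for $PT_X$ stated just before Example~\ref{2con}. First I would use Lemma~\ref{rrE} to obtain a right reduced maximal right pre-reduced subset of $E(T_X)$, and then adjoin the zero to it (noting that $0$ is $\sim_r$-related only to itself) to get a right reduced maximal right pre-reduced $E\subseteq E(T_X^0)$. Since the non-zero members of $E$ all lie in $T_X$, and ordinary and demonic composition agree on left total relations (in particular on members of $T_X$), the right quasiorder $\leq_r$ restricted to $E$, and hence the right reduced property of $E$, is the same whether computed inside $T_X^0$, $TRel_X^0$ or $Rel_X$.

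Next I would invoke Theorem~\ref{demonicmod}: for this choice of $E$, $TRel_X^0$ is an integral inductive left $E$-monoid and $(Rel_X,\circledast,D)\cong Rest_0(E,TRel_X^0)$. Because $E$ is right reduced, the remark following Proposition~\ref{conjef} (any modal left $E$-monoid in which $E$ is right reduced has definable meets) shows that the modal left $E$-monoid $TRel_X^0$ has definable meets, with $e\wedge f=(e\cdot f)e$ for all $e,f\in E$, where $\cdot$ denotes the left $E$-modal operation. Proposition~\ref{ZSP} then applies directly, with $s\cdot e$ the left $E$-modal operation on $TRel_X^0$ and $s^e=(s\cdot e)s$: laws \ref{zs1}, \ref{zs2} and \ref{zs4} hold, definable meets yields \ref{zs3}, so the external Zappa-Sz\'{e}p product $E\bowtie TRel_X^0$ is defined; moreover, since $TRel_X^0$ is a left $E$-monoid with zero, $(0,0)$ is a zero of $E\bowtie TRel_X^0$ and $Rest_0(E,TRel_X^0)$ is the maximum left restriction subsemigroup with zero of $E\bowtie TRel_X^0$ whose set of domain elements is ${\hat E}=\{(e,e)\mid e\in E\}$. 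Transporting this description along the isomorphism of Theorem~\ref{demonicmod} gives the stated embedding of $(Rel_X,\circledast,D)$.

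I do not anticipate a genuine obstacle here: the corollary is a purely formal consequence of the three cited results. The only point needing a word of care is the one already flagged in the first paragraph, namely that the right reduced property of $E$ is insensitive to which of the three relevant (sub)semigroups of $Rel_X$ one views $E$ inside, so that Lemma~\ref{rrE}, proved in terms of $E(T_X)$, may legitimately be fed into Proposition~\ref{ZSP} applied to the left $E$-monoid $TRel_X^0$.
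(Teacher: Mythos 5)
Your proposal is correct and follows exactly the route the paper intends: the corollary is stated as an immediate consequence of Lemma \ref{rrE} (adjoining $0$), Theorem \ref{demonicmod}, the fact that right reduced implies definable meets, and Proposition \ref{ZSP}, just as in the parallel corollary for $PT_X$. Your extra remark that the right reduced property of $E$ is insensitive to whether products are computed in $T_X^0$, $TRel_X^0$ or $Rel_X$ is a legitimate and correctly resolved point of care that the paper leaves implicit.
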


We note that the choice of $E$ in Theorem \ref{demonicmod} is never maximal right pre-reduced in $E((TRel_X)^0)$.  For example, for the full relation $\nabla$ on $X$, there is no $e\in E$ for which $\nabla\, {\mc L}\, e$, as is easily seen.   However, this does not imply $TRel_X^0$ is not protomodal.

\begin{eg}  Constructing $Rel_X$ from $TRel_X^0$ when $|X|=2$.   \label{2con2}
\end{eg}

Again, let $X=\{x,y\}$.  This time, $S=TRel_X^0=T_X^0\cup \{g,h,\nabla,a,b\}$, where $T_X^0$ is represented as in Example \ref{2con}, and 
$g=\{(x,x),(x,y),(y,y)\}$, $h=\{(x,x),(y,x),(y,y)\}$, $\nabla$ is the full relation, $a=\{(x,x),(x,y),(y,x)\}$, and $b=\{(x,y),(y,x),(y,y)\}$.  Now $E(S)=\{0,1,e,f,g,h,\nabla\}$ (which happens to be right pre-reduced but not right reduced since $eh=e$ yet $he=\nabla$), and let $E$ be $\{0,1,e,f\}$ as in Example \ref{2con}. There are 16 elements of $Rest_0(S,E)$, namely the nine elements of $Rest_0(T_X^0,E)$, 
\[
(0,0), (e,e), (f,f), (e,f), (f,e), (1,1), (1,i), (1,e), (1,f), 
\]
together with five additional elements capturing the non-functional left-total relations over $X$,
\[
(1,g),(1,h),(1,\nabla),(1,a),(1,b),
\]
and the two further elements $(e,\nabla), (f,\nabla)$, corresponding to restricting the domain of $\nabla$ to $\dom(e),\dom(f)$ respectively, giving $\{(x,x),(x,y)\}$ and $\{(y,x),(y,y)\}$.  

Again, whenever it exists the constellation product in $C_E(TRel_X^0)$ faithfully calculates not only demonic but also standard compositions of relations.  For general demonic compositions, we use multiplication in $Rest_0(E,TRel_X^0)$.  As an example, consider $a\circledast \{(x,x),(x,y)\}=\{(y,x),(y,y)\}$.   Working instead in $Rest_0(E,TRel_X^0)$ gives $(1,a)(e,\nabla)=(1\wedge a\cdot e,(a\cdot e)\nabla)$.  But $Eq(ae,a)=Eq(e,a)=\{0,f\}=Sf$, so $a\cdot e=f$ and we obtain $(1,a)(e,\nabla)=(f,f\nabla)=(f,\nabla)$, which does indeed correspond to 
$\{(y,x),(y,y)\}$.

Note that $TRel_X^0$ in this example is not protomodal: it is routine to verify that the equalizing set $Eq(a,a\nabla)=Eq(a,\nabla)=\{a,\nabla,e, h\}$ is not generated as a right ideal by any member of $E(S)$.  

Indeed one can check that for any other idempotent in $E(TRel_X^0)$ aside from those in $E$, there exists $s\in TRel_X^0$ for which $Eq(s,se)$ is not generated by any member of $E(S)$.  It follows that $E$ is already largest possible (as in Proposition \ref{maxmodE}): no bigger $E'$ exists for which $TRel_X^0$ is an inductive $E'$-monoid.  We do not know if $E$ as in Theorem \ref{demonicmod} is largest possible for arbitrary $X$.

\subsection{Transformations and left total partitions.} \label{partiteg}

In Subsection \ref{pareg}, $T_X^0$ was seen to be a (left) protomodal monoid, and $PT_X\cong Rest_0(T_X^0)$ as a left restriction monoid.  In fact, $T_X$ is also right protomodal, leading to an unexpected connection with the partition monoid on the set $X$.  

We do not here intend to give an exhaustive introduction to partition monoids, and instead point the interested reader to introductions to the topic appearing in the literature, for example \cite{East} and \cite{eastgray}, where the notation and definitions are consistent with those we use here.  Here, we simply note that the partition monoid concept came from work in \cite{halram} and \cite{wilcox}, where in the finite case the authors of these papers related partition monoids to partition algebras (already considered by many authors), via the notion of a twisted semigroup algebra.

Let $X$ be a non-empty set.  A {\em partition} is an equivalence relation on $X\cup X'$ where $X'$ is a copy of $X$ with $x\in X$ corresponding to $x'\in X'$. 

It is convenient to draw partitions as graphs with vertex set $X\cup X'$ which have as their connected components the blocks of the partition, with vertices in $X$ appearing in an upper row and vertices in $X'$ below them.  For example, in Figure \ref{fig1}, in which $X=\{1,2,3,4,5\}$ and $X'=\{1',2',3',4',5'\}$, the first graph represents the partition $\rho$ given by $\{1,2,1'\},\{3,4,5,4',5'\},\{2',3'\}$. The second represents the transformation in which $1\mapsto 1, 2\mapsto 1,3\mapsto 4,4\mapsto 4,5\mapsto 5$ as the partition $t\in T_X$ given by $$\{1,2,1'\}, \{3,4,4'\},\{5,5'\}, \{2'\},\{3'\}$$ (note that there is precisely one element of $X'$ in every cell), and the third represents the equivalence relation determined by the partition of $X$ having cells $\{1\},\{2,3\},\{4,5\}$ as the partition $e\in F$ given by $\{1,1'\}, \{2,3,2',3'\}, \{4,5,4',5'\}$.   

\begin{figure}[h]
\begin{center}
\begin{tikzpicture}[scale=.6]

\begin{scope}[shift={(0,0)}]	
\uvs{1,...,5}
\lvs{1,...,5}
\stline11
\stline21
\stline34
\stline55
\uarc34
\darc23
\darc45
\end{scope}

\begin{scope}[shift={(8,0)}]	
\uvs{1,...,5}
\lvs{1,...,5}
\stline11
\stline21
\stline34
\stline44
\stline55
\end{scope}

\begin{scope}[shift={(16,0)}]	
\uvs{1,...,5}
\lvs{1,...,5}
\stline11
\stline22
\stline44
\uarc23
\uarc45
\darc23
\darc45
\end{scope}

\end{tikzpicture}
\caption{Graphical depictions of the partitions $\rho$ (left), $t$ (middle) and $e$ (right).} 
\label{fig1}
\end{center}
\end{figure}

The partitions on $X$ may be endowed with a special type of multiplication which makes them into a monoid, called the {\em partition monoid} on $X$, here denoted $P_X$.  Multiplication may be defined as follows.  Given $s,t\in P_X$, let $s^{\downarrow}$ and $t^{\uparrow}$ be the graphs obtained by changing every lower vertex $v'$ of $s$ and every upper vertex $v$ of $t$ into $v''$, let $G_{s,t}$ be the graph on the vertex set $X\cup X''\cup X'$ with edge set the union of the edge sets of $s^{\downarrow}$ and $t^{\downarrow}$, viewing $X''$ as the middle row of $G_{s,t}$. To obtain the product $st$ if $s$ and $t$ are in $P_X$, simply put $u,v\in X\cup X'$ in the same block if and only if there is a path from $u$ to $v$ in $G_{s,t}$. As an example, consider the product $e\rho$, where $e$ and $\rho$ are as in the examples depicted in Figure \ref{fig1}; this is depicted in Figure \ref{fig2}.  The first figure indicates the process of forming the product using $G_{e,\rho}$, with the second figure giving $e\rho$; clearly, $\rho t$ is given by the partition
$$\{1,2,3,4,5,1',4',5'\}, \{2',3'\}.$$

Note that the identity element $1$ in $P_X$ is the equivalence relation in which each cell is a pair $\{x,x'\}$ where $x\in X$; in the five-element base set example above, this is represented by the partition $\{1,1'\},\{2,2'\},\ldots,\{5,5'\}$.  Again, we refer the reader to work such as \cite{East} and \cite{eastgray} for more details.

\begin{figure}[h]
\begin{center}
\begin{tikzpicture}[scale=.6]

\begin{scope}[shift={(0,0)}]	
\uvs{1,...,5}
\mvs{1,...,5}
\lvs{1,...,5}
\stlinea11
\stlinea22
\stlinea44
\stlineb11
\stlineb21
\stlineb34
\stlineb55
\uarc23
\uarc45
\marc23
\marc34
\marc45
\darc23
\darc45

\end{scope}

\begin{scope}[shift={(8,0)}]	
\uvs{1,...,5}
\lvs{1,...,5}
\stline11
\stline21
\stline55
\uarc23
\uarc34
\uarc45
\darc23
\darc45
\end{scope}

\end{tikzpicture}
\caption{Forming $e\rho$ with $e,\rho$ as in Figure \ref{fig1}: $G_{e,\rho}$ (left) and $e\rho$ (right).}
\label{fig2}
\end{center}
\end{figure}

Now denote by $P^{lt}_X$ the submonoid of $P_X$ consisting of the {\em left total partitions}, meaning partitions having domain all of $X$: $\rho$ is one such if for all $x\in X$ there is $y\in X'$ such that $(x,y)\in \rho$ (this is denoted $P^{fd}_X$ in \cite{eastgray}).  Let $F$ consist of the idempotents in the copy of the dual symmetric inverse monoid (see \cite{fitzleach}) sitting in $P^{lt}_X$; the elements of this dual symmetric inverse monoid copy are the partitions in which no cell (equivalence class) consists solely of elements of just $X$ or just $X'$, and the elements of $F$ correspond with the equivalence relations on $X$.  A typical cell has the form $Y\cup Y'$ where $Y\subseteq X$ and $Y'=\{y'\in Y'\mid y\in Y\}\subseteq X'$.  Note that $F$ is a semilattice with multiplication corresponding to join of equivalence relations, and so for $e,f\in E$, $e$ is finer than $f$ if and only if $e\ \omega\ f$, which means that $ef=f=fe$.  It is standard to view $T_X$ as embedded in $P^{lt}_X$ by identifying $t\in T_X$ with the partition $\rho_t$ which puts those elements of $X'$ that are not in the range of $t$ into a block by itself. All three partitions depicted in Figure \ref{fig1} are left total.  

The following is noted in \cite{eastgray} (see Proposition $4.14$ there). 

\begin{pro}
$P^{lt}_X$ is a submonoid without zero of $P_X$ which is a right restriction monoid in which $R(P^{lt}_X)=F$.
\end{pro}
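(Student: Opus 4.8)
The plan is to handle the three assertions in turn, working throughout with the description of the partition product by ``stacking'' (as in \cite{East} and \cite{eastgray}): to form $\rho\tau$ one draws $\rho$ above $\tau$, identifies the lower row of $\rho$ with the upper row of $\tau$ via a middle row, forms the equivalence relation on the resulting three rows generated by the blocks of $\rho$ together with the blocks of $\tau$, and then deletes the middle row. First, $P^{lt}_X$ is a submonoid of $P_X$: the identity of $P_X$ has blocks $\{x,x'\}$ and so is left total, and if $\rho,\tau\in P^{lt}_X$ and $x\in X$ then, $\rho$ being left total, $x$ lies in a block of $\rho$ containing a middle vertex, which, $\tau$ being left total, lies in a block of $\tau$ containing a lower vertex; chaining through the stacked diagram puts $x$ in a block of $\rho\tau$ meeting the lower row, so $\dom(\rho\tau)=X$ and $\rho\tau\in P^{lt}_X$.

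Next, $P^{lt}_X$ has no zero (for $|X|\ge 2$; for $|X|=1$ the monoid is trivial). Writing $\nabla$ for the one-block partition $\{X\cup X'\}$, note that every block of a left total partition meets $X'$, so stacking any $z\in P^{lt}_X$ above $\nabla$ absorbs every block of $z$ into the single block coming from $\nabla$; hence $z\nabla=\nabla$. Thus any zero would have to be $\nabla$ itself. But choosing $\rho\in P^{lt}_X$ left total and not co-total --- for instance with one block $X\cup\{y'\}$ and all other primed vertices in singleton blocks, which exists once $|X|\ge 2$ --- stacking $\nabla$ above $\rho$ leaves the isolated primed vertices of $\rho$ alone, so $\nabla\rho\neq\nabla$, and no zero exists.

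For the right restriction structure, identify $F$ with the equivalence relations on $X$, with $\theta\in F$ corresponding to the partition whose blocks are the sets $C\cup C'$ for $C$ a $\theta$-class; as recorded above, multiplication in $F$ is join of equivalence relations, so $F$ is a semilattice. Define $R\colon P^{lt}_X\to F$ by letting $R(\rho)$ be the lower kernel of $\rho$, that is, $(y_1,y_2)\in R(\rho)$ iff $y_1'$ and $y_2'$ lie in the same block of $\rho$; this is an equivalence relation on all of $X$, so $R(\rho)\in F\subseteq P^{lt}_X$, and $R(\theta)=\theta$ for $\theta\in F$, whence $R(P^{lt}_X)=F$. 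The crucial computation is that for $\theta\in F$, $\rho\theta=\rho$ iff $\theta\subseteq R(\rho)$: because $\theta$ as a partition is everywhere defined below, stacking $\rho$ above $\theta$ merges the lower blocks of $\rho$ exactly along $\theta$ and alters nothing else, so the product returns $\rho$ precisely when all such merges were already present. Hence $\rho R(\rho)=\rho$, and among the $e\in F$ with $\rho e=\rho$ --- which are exactly those refining $R(\rho)$ --- the element $R(\rho)$ is least in the semilattice order of $F$ (in which finer equivalence relations are the larger elements). Together with $F$ being a semilattice, this says precisely that $(P^{lt}_X,\cdot,R)$ is an RC-semigroup.

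Since an RC-semigroup is a right restriction monoid exactly when it satisfies the dual of law \ref{r4} (the discussion in Section \ref{leftrestetc}), it remains only to verify $R(\tau)\rho=\rho\,R(\tau\rho)$ for all $\rho,\tau\in P^{lt}_X$. I would carry this out by a direct block-chase in the two-fold stacked diagram: placing $R(\tau)$ above $\rho$ merely coarsens the upper row of $\rho$ by the lower kernel of $\tau$, and, tracking the (join) behaviour of lower kernels under stacking, one finds that each side reduces to $\rho$ with its lower row coarsened by the lower kernel of $\tau\rho$. I expect this verification of the dual of \ref{r4} to be the only genuine obstacle; the submonoid property, the absence of a zero, and the RC-semigroup axioms are all immediate from the description of the product. (As indicated in the text, the statement is Proposition~4.14 of \cite{eastgray}.)
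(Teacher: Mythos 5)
Your setup is sound: the submonoid and no-zero checks are fine (the paper simply cites \cite{eastgray} for these), your identification of $R(\rho)$ with the range-side (lower-kernel) partition agrees with the paper's, and your observation that $\rho\theta=\rho$ iff $\theta$ refines $R(\rho)$ correctly establishes the RC-semigroup structure with $R(P^{lt}_X)=F$. The problem is that the one genuinely non-trivial assertion --- that this RC-structure satisfies the dual of (R4), i.e.\ $R(\tau)\rho=\rho R(\tau\rho)$ --- is exactly the step you do not carry out. You describe a plan (``I would carry this out by a direct block-chase\ldots I expect this verification to be the only genuine obstacle'') and assert what ``one finds'', but no argument is given. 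Since everything else in the proposition is routine, this deferral leaves the proof essentially incomplete: the claim that domain-side coarsening of $\rho$ by $R(\tau)$ coincides with range-side coarsening by $R(\tau\rho)$ is precisely the content of the proposition and needs an actual verification (one must track, for instance, how blocks of $\rho$ merge under chains through $\theta$-classes on the upper row versus chains through classes of the lower kernel of $\theta\rho$, including the purely-primed blocks).

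For comparison, the paper avoids the diagram chase by an algebraic device: every $\rho\in P^{lt}_X$ factors as $\rho=tR(\rho)$ for some $t\in T_X$ (take $t$ sending each domain block to a chosen point of the corresponding range block), and the dual of (R4) is reduced to the two identities $R(\rho\tau)=R(R(\rho)\tau)$ and $e\rho=\rho R(e\rho)$ for $e\in F$, each of which is then verified by short computations using $es=sR(es)$ for $s\in T_X$ and $R(\rho e)=R(\rho)e$. If you want to keep your combinatorial route, you must actually execute the block-chase; alternatively, adopting the factorisation $\rho=tR(\rho)$ would let you finish with the paper's algebraic manipulations.
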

\begin{proof}   
For all $\rho\in P^{lt}_X$, let $R(\rho)$ be the coarsest $e\in F$ such that $\rho e=\rho$, which is the member of $F$ corresponding to the restriction of $\rho$ to $X'$.  (For example, with $\rho$ as in Figure \ref{fig1}, this is $\{1,1'\},\{2,3,2',3'\},\{4,5,4',5'\}$.)  This is the largest $e\in F$ with respect to $\omega$ such that $\rho e=\rho$.  But $F$ is a meet-semilattice under $\omega$ in which partition multiplication (which in this case is just the usual join of the corresponding equivalence relations) is this meet.  As in the discussion of Section \ref{leftrestetc}, $R$ defines an RC-semigroup operation on $M=P^{lt}_X$ in the sense of \cite{Csemi}, with $R(M)=F$.  Note that for any $t\in T_X$ and $e\in F$, $R(te)=e$.   It therefore only remains to show that for all $\rho,\tau\in P^{lt}_X$, $R(\rho)\tau=\tau R(\rho\tau)$. To do this, it suffices to show that $R(\rho\tau)=R(R(\rho)\tau)$, and for all $e\in R(P^{lt}_X)$ and $\rho\in S$, $e\rho=\rho R(e\rho)$.  

For $\rho\in P^{lt}_X$, let $t$ be any one transformation that takes everything in each domain block of $\rho$ to one particular element of the corresponding range block; it then follows fairly easily that $\rho=tR(\rho)$.  In particular, for $s\in T_X$ and $e\in R(P^{lt}_X)$, $es\in P^{lt}_X$ can have $t$ chosen to be $s$ itself, and so $es=sR(es)$.  Also, for any $e\in R(P^{lt}_X)$ and $\rho=tR(\rho)$ for some $t\in T_X$, $R(\rho e)=R(tR(\rho)e)=R(\rho)e$.

Hence for $\rho,\tau\in P^{lt}_X$, writing $\rho=sR(\rho)$ and $\tau=tR(\tau)$ for suitable $s,t\in T_X$, we see that
$$R(R(\rho)\tau)=R(R(\rho)tR(\tau))=R(R(\rho)t)R(\tau),$$
while
\bea
R(\rho\tau)&=&R(sR(\rho)tR(\tau))\\
&=&R(stR(R(\rho)t)R(\tau))\\
&=&R(R(\rho)t)R(\tau)\\
&=&R(R(\rho)tR(\tau))\\
&=&R(R(\rho)\tau),
\eea
and for any $e\in R(P^{lt}_X)$,
$e\rho=esR(\rho)=sR(es)R(\rho)=sR(\rho)R(esR(\rho))=\rho R(e\rho)$.
\end{proof}

In Figure \ref{fig1}, observe that $R(\rho)=e$, while $R(t)=1$, and of course $R(e)=e$.  From Figure \ref{fig2}, note that $R(e\rho)=\{1,4,5,1',4',5'\}$, depicted in Figure \ref{fig3}, and then $\rho R(e\rho)$ is also as in Figure \ref{fig3}; evidently this agrees with $e\rho$ as in Figure \ref{fig2}, in accord with the right restriction semigroup law $R(s)t=tR(st)$.

\begin{figure}[h]
\begin{center}
\begin{tikzpicture}[scale=.6]

\begin{scope}[shift={(0,0)}]	
\uvs{1,...,5}
\lvs{1,...,5}
\stline11
\stline22
\stline41
\stline55
\uarc23
\uarc45
\darc23
\darc45
\end{scope}

\begin{scope}[shift={(8,0)}]	
\uvs{1,...,5}
\mvs{1,...,5}
\lvs{1,...,5}
\stlinea11
\stlinea21
\stlinea34
\stlinea55
\uarc23
\uarc45
\marc23
\marc45
\stlineb11
\stlineb22
\stlineb41
\stlineb55
\darc23
\darc45
\end{scope}

\begin{scope}[shift={(16,0)}]	
\uvs{1,...,5}
\lvs{1,...,5}
\stline11
\stline21
\stline55
\uarc23
\uarc34
\uarc45
\darc23
\darc45
\end{scope}

\end{tikzpicture}
\caption{Forming $\rho R(e\rho)$: $R(e\rho)$ (left), $G_{\rho,R(e\rho)}$ (centre) and $\rho R(e\rho)$ (right).}
\label{fig3}
\end{center}
\end{figure}

Generally, in the right restriction monoid $P^{lt}_X$, $(P^{lt}_X)_1$ consists of those $\rho\in P^{lt}_X$ such that $R(\rho)=1$, that is, those $\rho$ for which $\rho e=\rho$ for some $e\in E$ implies $e=1$.  It follows that such a $\rho$ must be a left total partition such that $\rho$ restricted to its range (lower) row must be the diagonal relation.  So the elements of $(P^{lt}_X)_1$ may be viewed as transformations (with those $x\in X$ not in the image of $\rho$ being in cells by themselves), as for $t$ in Figure \ref{fig1}.  Composition in $(P^{lt}_X)_1$ is then easily seen to correspond to transformation composition, so $(P^{lt}_X)_1\cong T_X$, and we identify the two monoids.   

Again, recall that the idempotents of $T_X$ are projections, but this time we are interested in $\omega^r$ on $E(T_X)$.  Recall that the kernel of $s\in T_X$ is the equivalence relation $\{(x,y)\in X\times X\mid xs=ys\}$.  It is well-known that for $e,f\in E(T_X)$, $e\ \omega^r\ f$ (that is, $fe=e$) if and only if the kernel of $e$ contains the kernel of $f$.  Hence $E\subseteq E(T_X)$ is maximal left pre-reduced if and only if it consists of precisely one projection for every possible element of $F\subseteq E(P^{lt}_X)$.

\begin{thm}  \label{ltpart}
$T_X$ is a right protomodal monoid, and $P^{lt}_X\cong RRest(T_X)$. 
\end{thm}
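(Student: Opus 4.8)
The plan is to mirror the proof of Theorem~\ref{TX0extend} on the range side, using the facts already established that $P^{lt}_X$ is a right restriction monoid with $R(P^{lt}_X)=F$ and that $(P^{lt}_X)_1\cong T_X$ (a copy we identify with $T_X$). Since $P^{lt}_X$ has no zero, what I need is the dual of the ``enough large idempotents'' condition: for every $e\in F$ there should exist $f\in E(T_X)$ with $e\sim_l f$. Given an equivalence relation $e\in F$, I would take $f=e'$ to be any projection in $T_X$ whose kernel is $e$, that is, one collapsing each $e$-class to a chosen representative. The key computation is that $e'e=e$ and $ee'=e'$ as products \emph{in $P^{lt}_X$}; the first of these is the identity $\rho=tR(\rho)$ from the proof that $P^{lt}_X$ is right restriction, applied with $\rho=e$ (so $R(\rho)=e$) and $t=e'$, and both are immediate on inspecting the partition graphs. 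Thus $e\sim_l e'$, so $E=\{e'\mid e\in F\}$ (one projection per kernel) shows that $P^{lt}_X$ has enough large idempotents in the dual sense.

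With that in hand, the dual of Theorem~\ref{char0} applies directly: as $P^{lt}_X$ is a right restriction monoid with enough large idempotents and no zero, $(P^{lt}_X)_1\cong T_X$ is an inductive right $E$-monoid (with right $E$-modal operation the dual of the one in that theorem, namely $e'\cdot t=R(et)'$), and $P^{lt}_X\cong RRest(E,T_X)$.

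It then remains to pass from this particular $E$ to an arbitrary maximal left pre-reduced set, so that $RRest(T_X)$ itself appears. By the characterisation recorded just before the theorem statement, a subset of $E(T_X)$ is maximal left pre-reduced precisely when it contains exactly one projection for each element of $F$ --- which is how $E$ was chosen --- so $E$ is maximal left pre-reduced. (One also notes that every $f\in E(T_X)$ is a projection whose kernel lies in $F$, and $f\sim_l e'$ for the corresponding $e'\in E$, so $P^{lt}_X$ has precisely enough large idempotents.) Since $T_X$ is a modal, indeed inductive, right $E$-monoid for a maximal left pre-reduced $E$, the dual of Corollary~\ref{eqmod} gives that $T_X$ is right protomodal. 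Hence, by the dual of Definition~\ref{restSdef}, $RRest(T_X)$ is well defined as the isomorphism class of $RRest(E,T_X)$, and the isomorphism of the previous paragraph yields $P^{lt}_X\cong RRest(T_X)$.

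The main obstacle, and the one place where I would work carefully rather than cite a dual, is the pair of identities $e'e=e$ and $ee'=e'$ in $P^{lt}_X$: this is a computation in the partition monoid, and it is worth being explicit that the multiplication there agrees with composition only on $T_X$ and not on all of $P^{lt}_X$, and that the preorder at issue is $\leq_l$ (governed by kernels of transformations) rather than the $\leq_r$ (governed by ranges) that featured in the $PT_X$ case. Everything else follows by dualising results already proved in Sections~\ref{lrchar} and~\ref{leftEmod}.
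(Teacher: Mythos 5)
Your proposal is correct and follows essentially the same route as the paper: it verifies the dual ``enough large idempotents'' condition by taking, for each $e\in F$, a projection $e'\in T_X$ with kernel $e$ (so $e'e=e$ and $ee'=e'$), invokes the dual of Theorem~\ref{char0} to get $P^{lt}_X\cong RRest(E,T_X)$, and then checks that $E$ is maximal left pre-reduced (via ``precisely enough large idempotents'') to conclude right protomodality and hence $P^{lt}_X\cong RRest(T_X)$. The only difference is cosmetic: you spell out a few dualisations (of Corollary~\ref{eqmod} and Definition~\ref{restSdef}) that the paper leaves implicit.
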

\begin{proof}  For $e\in F=R(P^{lt}_X)$, there is $e'\in (P^{lt}_X)_1=T_X$ such that $ee'=e',e'e=e$: let $e'$ be any projection having kernel the equivalence relation associated with $e$ (with elements of $X$ not in the range in cells on their own).  So $P^{lt}_X$ has enough large idempotents (noting it has no zero), and so by (the dual of) Theorem \ref{char0}, 
$P^{lt}_X\cong RRest((P^{lt}_X)_1,E')$, where $E'=\{e'\mid e\in R(P^{lt}_X)\}$ is left pre-reduced. But for each $f\in E(T_X)$, select $e\in F$ such that $e\, {\mc L}\, f$ (so $e$ is the kernel of $f$); this shows $P^{lt}_X$ has precisely enough large idempotents, so arguing as in Theorem \ref{TX0extend}, $E'$ is maximal left pre-reduced and so $T_X$ is right protomodal, and we may write $P^{lt}_X\cong RRest(T_X)$. 
\end{proof}

By (the dual of) the proof of Theorem \ref{char0}, there is an isomorphism $\theta: RRest(T_X,E)\rightarrow P^{lt}_X$, where $E$ consists of precisely one projection for each possible member of $F$, given by $(t,e')\theta=te$ for all $t\in T_X$ and $e'\in E$.  As an example, if $e'$ is the projection in $E$ corresponding to $e$ in Figure \ref{fig1}, then $te'=t$, and indeed $(t,e')\theta=te=\rho$.  It was noted in \cite{eastgray} that one could (non-uniquely) factorize elements of $P^{lt}_X$ into products of elements of $T_X=(P^{lt}_X)_1$ with elements of $F=R(P^{lt}_X)$, and indeed this is true for any right restriction semigroup with enough large idempotents; see (the dual of) Corollary \ref{factor}.

The opposite monoid $S$ of $P^{lt}_X$ is a left restriction monoid, hence is functionally representable.  But even more can be said, because $P^{lt}_X$ comes equipped with a domain operation given by $D(s)=ss^*\in R(P^{lt}_X)$; dualizing gives a range operation on the left restriction monoid $S$, which can easily be shown to satisfy the laws given in \cite{scheinDR} that characterise semigroups of partial functions equipped with $D$ and $R$.  So $P^{lt}_X$ can be viewed as a kind of dual structure to $PT_X$.  This extends the relationship between the dual symmetric inverse monoid and the familiar symmetric inverse monoid $I_X$ considered in \cite{fitzleach} (noting that these embed as right restriction monoids in $P^{lt}_X$ and $PT_X$ respectively), and was also noted in \cite{eastgray} since both arise within $P_X$.  It remains to be determined whether there is a formal categorical duality as for symmetric and dual symmetric inverse monoids as in \cite{fitzleach}.

As in the previous example, we can re-phrase things in terms of Zappa-Sz\'{e}p products since there are choices of maximal left pre-reduced $E$ in $E(T_X)$ that are left reduced.  Again, the well-ordering theorem is needed for this.  Let $X$ be a set, and equip it with a well-order $\leq$.  Define $E\subseteq T_X$ such that, for each partition of $X$, we form the projection which maps each cell of the partition to its smallest member.  Evidently there is precisely one member of $E$ for each equivalence relation on $X$, so $E$ is maximal left pre-reduced.  

\begin{lem}  \label{llE}
The set $E\subseteq E(T_X)$ defined above is left reduced.
\end{lem}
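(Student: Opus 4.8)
The plan is to argue in the spirit of the proof of Lemma \ref{rrE}: given $e,f\in E$ with $fe=e$, show $ef=e$ by a direct pointwise computation, the only substantive ingredient being the ``coherence'' of taking $\leq$-minima under refinement of partitions.

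First I would record that, as recalled above for $E(T_X)$, the condition $fe=e$ says precisely that $\ker(e)\supseteq\ker(f)$, so every $\ker(e)$-class is a (disjoint) union of $\ker(f)$-classes. Now fix $x\in X$ and let $C$ be the $\ker(e)$-class of $x$, so $xe$ is the $\leq$-least element of $C$ by the definition of $E$. Let $D$ be the $\ker(f)$-class of $xe$; since $xe\in C$ and $C$ is a union of $\ker(f)$-classes, $D\subseteq C$. But $xe=\min C$ and $xe\in D\subseteq C$, so $xe=\min D$, which is exactly the statement that $f$ fixes $xe$; hence $x(ef)=(xe)f=xe$. As $x$ was arbitrary, $ef=e$, and so $E$ is left reduced. (Equivalently one may phrase this through the order characterisations $fe=e\Leftrightarrow\ker(e)\supseteq\ker(f)$ and $ef=e\Leftrightarrow\ran(e)\subseteq\ran(f)$: the computation above just says that, for $e,f\in E$, $\ran(e)\subseteq\ran(f)$ whenever $\ker(e)$ coarsens $\ker(f)$, because the least element of a class which falls inside a sub-class is also the least element of that sub-class.)

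The single point of real content is that observation: the least element of a set, when it happens to lie in a subset, is also the least element of that subset. This ``coherence'' of the chosen representatives across refinements is precisely what fails for an arbitrary transversal of equivalence classes, and it is the reason the well-order is used here — the well-ordering theorem being invoked only to guarantee that every (possibly infinite) cell has a least element. I do not foresee any genuine obstacle; the one thing to keep straight is the left-to-right composition convention, so that $fe$ means ``apply $f$ then $e$'', whence $fe=e$ constrains kernels while $ef=e$ is the identity to be proved.
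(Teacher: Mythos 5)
Your proof is correct and follows essentially the same route as the paper's: both arguments reduce the claim to the single observation that, when $\ker(f)\subseteq\ker(e)$, the $\leq$-least element of a $\ker(e)$-cell is also the least element of the $\ker(f)$-subcell containing it, so the chosen representatives are coherent under refinement. Your pointwise computation of $x(ef)$ is equivalent to the paper's range-containment formulation (you even note the translation $ef=e\Leftrightarrow\ran(e)\subseteq\ran(f)$ yourself), and if anything you keep the direction of the kernel containment and the roles of $e$ and $f$ straighter than the printed proof does.
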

\begin{proof}
Suppose $e,f\in E$ and $ef=f$, so $ker(e)\subseteq ker(f)$.  Now for $x\in \ran(e)$, $e^{-1}(x)$ is a cell of the equivalence relation $ker(e)=\{(a,b)\in X\times X\mid e(a)=e(b)\}$, which is the disjoint union of cells of $ker(f)$, one of which therefore contains $x$.  Since $x$ was the smallest member of its cell in $ker(e)$, it will be the smallest member of the not larger cell in $ker(f)$ containing it, so by definition $x\in \ran(f)$.  Hence $\ran(e)\subseteq \ran(f)$, and so $fe=f$.  Hence $E$ is left reduced.
\end{proof}

\begin{cor}
Let $E$ be a left reduced maximal left pre-reduced subset of $E(T_X^0)$.  Then $P^{lt}_X$ embeds as the largest right restriction semigroup within the Zappa-Sz\'{e}p product $T_X\bowtie E$ (defined in the obvious dual way to $E\bowtie S$) with set of range elements ${\hat E}$ as in (the dual of) Proposition \ref{ZSP}.
\end{cor}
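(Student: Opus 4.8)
The plan is to recognise the statement as the right-handed mirror of the combination of Theorem \ref{ltpart}, Lemma \ref{llE} and Proposition \ref{ZSP}, and to assemble these in the dual setting, checking only that the hypotheses of the dual of Proposition \ref{ZSP} are met. First I would fix a left reduced maximal left pre-reduced set $E\subseteq E(T_X)$; such an $E$ exists by Lemma \ref{llE}. By Theorem \ref{ltpart}, $T_X$ is right protomodal and has no zero, so by the dual of Corollary \ref{eqmod} it is a modal right $E$-monoid for this $E$, and then by the dual of Corollary \ref{cormax} it is in fact an \emph{inductive} right $E$-monoid.

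Next I would check that this inductive right $E$-monoid has definable meets. Since $E$ is left reduced by Lemma \ref{llE}, the remark following Proposition \ref{conjef} --- that any modal left $E$-monoid in which $E$ is right reduced has definable meets --- dualises to give that $T_X$ with this $E$ has definable meets. This is exactly the hypothesis needed to invoke the dual of Proposition \ref{ZSP}: the dualised forms of laws \ref{zs1}--\ref{zs4} hold, the Zappa-Sz\'{e}p product $T_X\bowtie E$ (monoid factor first, semilattice factor second) is defined, $RRest(T_X,E)$ is a subsemigroup of it, and $RRest(T_X,E)$ is the largest right restriction subsemigroup of $T_X\bowtie E$ whose set of range elements is $\hat E=\{(e,e)\mid e\in E\}$.

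It then remains to connect this back to $P^{lt}_X$. The proof of Theorem \ref{ltpart} shows that $P^{lt}_X$ has precisely enough large idempotents, so by the dual of Corollary \ref{char02} (equivalently, by the dual of Theorem \ref{char0} applied to the projection set $E$) one gets $P^{lt}_X\cong RRest(T_X,E)$ for our chosen maximal left pre-reduced $E$, the dual of Corollary \ref{isomrest} ensuring that the particular choice of such $E$ is immaterial up to isomorphism. Composing the isomorphism $P^{lt}_X\cong RRest(T_X,E)$ with the inclusion $RRest(T_X,E)\hookrightarrow T_X\bowtie E$ then produces the desired embedding, whose image, by the previous paragraph, is precisely the largest right restriction subsemigroup of $T_X\bowtie E$ with range elements $\hat E$. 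Since $P^{lt}_X$ has no zero, we land in the $RRest$ rather than the $RRest_0$ case, as in the statement.

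I expect the only real work to be bookkeeping: one must dualise Proposition \ref{ZSP} and Theorem \ref{char0} consistently, interchanging ``left'' with ``right'', $\leq_r$ with $\leq_l$, domain with range, and the two factors of $\bowtie$, and one must confirm that $T_X\bowtie E$ as written is genuinely the mirror image of $E\bowtie S$. I do not anticipate any genuine obstacle, since all of the substantive content already resides in Theorem \ref{ltpart}, Lemma \ref{llE} and Proposition \ref{ZSP}; the present corollary is essentially a matter of transcription.
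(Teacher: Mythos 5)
Your proposal is correct and follows essentially the same route as the paper: the isomorphism $P^{lt}_X\cong RRest(E,T_X)$ from Theorem \ref{ltpart}, the left reduced property of $E$ giving definable meets, and the dual of Proposition \ref{ZSP} supplying both the inclusion into $T_X\bowtie E$ and the maximality claim. The paper's own proof is just a one-line compression of exactly these steps.
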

\begin{proof} 
$P^{lt}_X\cong RRest(E,T_X)\subseteq T_X\bowtie E$.  Now use the dual of Proposition \ref{max}.
\end{proof}

\section{Further work}  \label{more}

Operations on the monoid $PT_X$ other than domain have been considered, for example intersection, range and antidomain.  Each is entirely determined by the structure of the left restriction semigroup, since a given left restriction semigroup admits an operation modelling intersection, range or antidomain in at most one way.  Hence, it must be possible to determine properties of inductive left $E$-monoids that give rise to these operations in their left $E$-completions.  In future work, we shall show that an operation behaving like intersection exists in $Rest(E,S)$ (resp. $Rest_0(E,S)$ if $S$ is integral) if and only if $S$ is such that left equalizers of any two {\em arbitrary} elements exist, and moreover this assumption forces $S$ to be protomodal.

In the case of $Rel_X$ as in Example \ref{demoneg}, other operations and relations, such as union and inclusion, can be defined on $TRel_X$, and should extend to $Rel_X$ to give demonic counterparts of union and inclusion.  Again, we plan to explore this in future work.

\section*{Acknowledgements}

I would like to thank James East for discussions relating to the submonoid $P^{lt}_X$ of $P_X$, as well as for assistance with the generation of figures to represent partitions on finite sets.  I am also extremely appreciative of the anonymous referee's tireless efforts in furnishing so many useful comments and suggestions, which greatly helped to increase the readability of the work.

\vfill

\noindent Tim Stokes\\
Department of Mathematics\\
University of Waikato\\
Hamilton 3216\\
New Zealand.\\
email: tim.stokes@waikato.ac.nz

\end{document}